\newtheorem{thm}{Theorem}
\newtheorem{cor}[thm]{Corollary}
\newtheorem{defi}[thm]{Definition}
\newtheorem{rem}[thm]{Remark}
\newtheorem{nota}[thm]{Notation}
\newtheorem{ack}[thm]{Acknowledgement}
\newtheorem*{tempo*}{Template}
\newcommand\be{\begin{equation}}
\newcommand\ee{\end{equation}} 
\def\bdefi{\begin{defi}\rm}
\def\edefi{\end{defi}}
\def\bnota{\begin{nota}\rm}
\def\enota{\end{nota}}
\def\FIVE{\Pi_{1}^{1}\text{-\textup{\textsf{CA}}}_{0}}
\def\SIXK{\Pi_{k}^{1}\text{-\textsf{\textup{CA}}}_{0}^{\omega}}
\def\Z{\textup{\textsf{Z}}}
\def\NFP{\textup{\textsf{NFP}}}
\def\ZF{\textup{\textsf{ZF}}}
 \def\r{\mathbb{r}}
\def\c{\textup{\textsf{c}}}
\def\RCA{\textup{\textsf{RCA}}}
\def\({\textup{(}}
\def\){\textup{)}}
\def\RCAo{\textup{\textsf{RCA}}_{0}^{\omega}}
\def\ACAo{\textup{\textsf{ACA}}_{0}^{\omega}}
\def\WKL{\textup{\textsf{WKL}}}
\def\bye{\end{document}}
\def\N{{\mathbb  N}}
\def\Q{{\mathbb  Q}}
\def\R{{\mathbb  R}}
\def\L{\textsf{\textup{L}}}
\def\di{\rightarrow}
\def\asa{\leftrightarrow}
\def\ACA{\textup{\textsf{ACA}}}
\def\QFAC{\textup{\textsf{QF-AC}}}
\def\HBU{\textup{\textsf{HBU}}}
\def\AUTO{\textup{\textsf{AUTO}}}
\def\UACL{\textup{\textsf{UACL}}}
\def\ALCL{\textup{\textsf{ALCL}}}
\def\CA{\textup{\textsf{CA}}}
\def\SEP{\textup{\textsf{SEP}}}
\def\rado{\textup{\textsf{Rado}}}
\def\range{\textup{\textsf{range}}}
\def\RANGE{\textup{\textsf{RANGE}}}
\def\SS{\textup{\textsf{S}}}
\def\IND{\textup{\textsf{IND}}}
\def\net{\textup{\textsf{net}}}
\def\CLO{\textup{\textsf{CLO}}}
\def\BOOT{\textup{\textsf{BOOT}}}
\def\CLO{\textup{\textsf{CLO}}}
\def\seq{\textup{\textsf{seq}}}
\def\ORD{\textup{\textsf{ORD}}}
\def\MCT{\textup{\textsf{MCT}}}
\def\ECF{\textup{\textsf{ECF}}}
\numberwithin{equation}{section}
\numberwithin{thm}{section}
\begin{document}
\title{Lifting proofs from countable to uncountable mathematics}
\author{Sam Sanders}
\address{Department of Mathematics, TU Darmstadt, Germany}
\email{sasander@me.com}
\keywords{recursive counterexample, higher-order arithmetic, Specker sequence, net, Moore-Smith sequence, Reverse Mathematics}
\begin{abstract}
Turing's famous `machine' framework provides an intuitively clear conception of `computing with real numbers'.  
A \emph{recursive counterexample} to a theorem shows that the theorem does not hold when restricted to computable objects. 
These counterexamples are often crucial in establishing \emph{reversals} in the \emph{Reverse Mathematics} program.  
All the previous is essentially limited to a language that can only express \emph{countable} mathematics directly.  
The aim of this paper is to show that reversals and recursive counterexamples, countable in nature as they might be, directly yield new and interesting 
results about \emph{uncountable} mathematics \emph{with little-to-no modification}.  
We shall treat the following topics/theorems: the monotone convergence theorem/Specker sequences, compact and closed sets in metric spaces, the Rado selection lemma, the ordering and algebraic closures of fields, and ideals of rings.  The higher-order generalisation of sequence is of course provided by \emph{nets} (aka \emph{Moore-Smith sequences}).   
\end{abstract}


\maketitle
\thispagestyle{empty}

\section{Introduction}\label{intro}
In a nutshell, the aim of this paper is to show that a certain class of proofs originating from \emph{computability theory} about (essentially) countable mathematics gives rise to new and interesting proofs in \emph{uncountable} mathematics \emph{with very little modification}.  An informal description can be found in Section \ref{kintro} while a more technically detailed sketch is in Section \ref{farg}.

\subsection{Informal motivation and summary}\label{kintro}
An ever-returning question in mathematics is whether a proof of a given theorem gives rise to a more general result, perhaps after some modification to the proof.  
For instance, the (inherently countable) notion of `sequence of real numbers' has been generalised (to the uncountable) in the form of the topological notion of `net', introduced below.   
It is then a natural question whether proofs of theorems about sequences (somehow) give rise to proofs of theorems about nets.
More generally, is there a systematic way of transferring {proofs} about countable mathematics to proofs about uncountable mathematics?
The aim of this paper is to provide many examples of proofs that transfer \emph{directly} from countable to uncountable mathematics \emph{with very little modification}.
Our results \emph{suggest} the existence of a systematic procedure, which we are yet to discover.  

\smallskip

To illustrate the nature of the aforementioned transfer, an example from undergraduate calculus may be illuminating at this point.  To this end, consider the \emph{monotone convergence theorem} ($\MCT_{\seq}^{[0,1]}$ for short) which expresses that 
\emph{a monotone sequence in the unit interval converges}.  It is well-known that one can derive \emph{arithmetical comprehension} ($\ACA_{0}$ for short) from $\MCT_{\seq}^{[0,1]}$, where the former comprehension axiom implies a solution to Turing's \emph{Halting problem} (see e.g.\ \cite{simpson2}*{III.2.2}).  
We show below that \emph{with very little modification}, this well-known proof of $\MCT_{\seq}^{[0,1]}\di \ACA_{0}$ yields a proof of $\MCT_{\net}^{[0,1]}\di \BOOT$; here $\MCT_{\net}^{[0,1]}$ is a version of 
the monotone convergence theorem \emph{for nets} in the unit interval while $\BOOT$ is a very strong (uncountable) comprehension axiom dwarfing $\ACA_{0}$.  

\smallskip

The previous paragraph can be summarised as follows: some results on sequences (countable as they may be) can be \emph{directly} translated to a much more general result about 
nets, which intuitively are the generalisation of sequences to the uncountable.  In this paper, we establish many such results in which proofs about countable mathematics are `transferred' or `lifted' to proofs about uncountable mathematics.      

\smallskip

As discussed in detail in Section \ref{farg}, we mostly study proofs that originate in \emph{computability theory}.  
These proofs pertain to the following topics/theorems: the monotone convergence theorem/Specker sequences, compact and closed sets in metric spaces, the Rado selection lemma, the ordering and algebraic closures of fields, and ideals of rings.  The higher-order generalisation of `sequence' is of course provided by \emph{nets} (aka \emph{Moore-Smith sequences}), as suggested above.  

\smallskip

Finally, this paper is a spin-off from my joint project with Dag Normann on the Reverse Mathematics and computability theory of the uncountable.  
The interested reader should consult \cite{dagsamIII, dagsamV, dagsamVII} for a published and detailed account of our project. 

%

\subsection{Introduction and summary}\label{farg}
In this section, we make the informal summary from Section~\ref{kintro} more precise.  
In particular, we shall select a particular class of proofs about `countable mathematics' while also making the latter concept precise.  
We provide a list of theorems of which the proofs will be lifted to `uncountable mathematics' in Section \ref{main}, while also making the latter concept precise. 
An overview of the technical prerequisites of this paper is given in Section \ref{prelim}.

\smallskip

First of all, \emph{computability theory} has its roots in the seminal work of Turing, providing an intuitive notion of computation based on what we nowadays call \emph{Turing machines} (\cite{tur37}).
Now, \emph{classical} (resp.\ higher-order) computability theory deals with the computability of real numbers (resp.\ higher-order objects).  
In classical computability theory, a \emph{recursive counterexample} to a theorem (formulated in an appropriate language) shows that the latter does not hold when restricted to computable sets. 
An historical overview may be found in the introduction of \cite{recmath1}.  

\smallskip

Recursive counterexamples turn out to be highly useful in the Friedman-Simpson \emph{Reverse Mathematics} program ({RM} hereafter; see Section \ref{prelim1}).  
Indeed, the aim of RM is to determine the minimal axioms needed to prove a given theorem of ordinary\footnote{Simpson describes \emph{ordinary mathematics} in \cite{simpson2}*{I.1} as \emph{that body of mathematics that is prior to or independent of the introduction of abstract set theoretic concepts}.} mathematics, often resulting in an \emph{equivalence} between these axioms and the theorem;
recursive counterexamples often (help) establish the `reverse' implication (or: reversal) from the theorem at hand to the minimal axioms.  We refer to \cite{rauwziel}*{p.\ 1368} for this opinion, while a number of recursive counterexamples are mentioned in \cite{simpson2} and the RM literature at large.  

\smallskip

As is well-known, both (classical) RM and recursion theory are essentially restricted to $\L_{2}$, the language of second-order arithmetic, which only has variables for natural numbers and sets thereof; uncountable objects are (only) available in $\L_{2}$ via countable representations.  A good example is that of (codes for) continuous functions as given in e.g.\ \cite{simpson2}*{II.6.1}.  
Historically, second-order arithmetic stems from the logical system $H$ in the \emph{Grundlagen der Mathematik} (see \cite{hillebilly2, hillebilly}) in which Hilbert and Bernays formalise large swaths of mathematics.  While the system $H$ involves third-order parameters and operators, Hilbert and Bernays sketch alternative systems without the latter. 

\smallskip

In the previous paragraphs, we observed that the language second-order arithmetic is (essentially) restricted to countable objects and we identified a particular class of proofs 
in this language, namely `reversals' and `recursive counterexamples'. 
The aim of this paper is now to show that \emph{with very little modification} recursive counterexamples from computability theory and reversals from RM, second-order/countable as they may be, can yield interesting results in higher-order mathematics.  
In particular, we shall treat the following theorems/topics in this way.  
\begin{enumerate}
 \renewcommand{\theenumi}{\alph{enumi}}
\item montone convergence theorem/Specker sequences (Section \ref{specknets}), \label{gong}
\item compactness of metric spaces (Section \ref{metricsect}), \label{gong2}
\item closed sets in metric spaces (Section \ref{clookes}), 
\item Rado selection lemma (Section \ref{radokes}), \label{gong3}
\item ordering of fields (Section \ref{forder}), \label{gong4}
\item algebraic closures of fields (Section \ref{aclo}),\label{killoo}
\item ideals of rings (Section \ref{maxiprime}).
\end{enumerate}
The example involving item \eqref{gong} was sketched in Section \ref{kintro}.   
As another example, item \eqref{gong4} pertains to Ershov's recursive counterexample about the ordering of \emph{countable} fields, which we will lift to a similar result about \emph{uncountable} fields.  

\smallskip

%
%
We note that items \eqref{gong}, \eqref{gong2}, \eqref{gong3}, and \eqref{gong4}, were first published in CS proceedings, namely \cite{samflo2}.
We shall work in the formalism provided by Kohlenbach's \emph{higher-order} RM (\cite{kohlenbach2}; see Section \ref{prelim1} for an introduction).  
To be absolutely clear, our `lifted results' often require an extension of Kohlenbach's base theory with either a fragment of countable choice or extra comprehension; 
by contrast, known proofs of the same result (involving no lifting) go through in the aforementioned base theory.  We provide an example in Section \ref{specknetssub}.

\smallskip

We stress that no systematic procedure is given (or is known at this point).  The aim of this paper is to show that \emph{with little modification} many recursive counterexamples and reversals, second-order as they may be, also establish results in higher-order arithmetic (see Section \ref{prelim1} for the latter). 

\smallskip

As will become clear below, there is no `size restriction' on our results.  For instance, we first lift Specker \emph{sequences} as in item \eqref{gong} to Specker \emph{nets} indexed by Baire space in Section \ref{specknets}; the same proof then immediately provides a lifting to Specker nets \emph{indexed by sets of any cardinality} (expressible in the language at hand).  Intuitively, our results constitute some kind of empirical version of the L\"owenheim-Skolem theorem, which states that models of any cardinality exist.  

\smallskip

As to naming, \emph{Reverse Mathematics} derives its name from the `reverse' way of doing mathematics it embodies, namely to derive theorems from axioms rather than the other way around.  
This name was coined by Friedman (\cite{simpson2}*{p.\ 35}).
Perhaps the results in this paper should therefore be baptised \emph{upwards mathematics}, as it pushes theorems upwards in the (finite) type hierarchy.  

\smallskip

%

\smallskip

Finally, it is now an interesting question which other techniques of classical computability theory lift with similar ease into the higher-order world. 
Similarly, it is a natural question whether one can obtain a meta-theorem that encompasses all the below liftings.
We do not have an answer to these questions.  

\section{Preliminaries}\label{prelim}

We introduce \emph{Reverse Mathematics} in Section \ref{prelim1}, as well as its generalisation to \emph{higher-order arithmetic}, and the associated base theory $\RCAo$.  
We introduce some essential axioms in Section~\ref{prelim2}.

\subsection{Reverse Mathematics}\label{prelim1}
Reverse Mathematics is a program in the foundations of mathematics initiated around 1975 by Friedman (\cites{fried,fried2}) and developed extensively by Simpson (\cite{simpson2}).  
The aim of RM is to identify the minimal axioms needed to prove theorems of ordinary, i.e.\ non-set theoretical, mathematics.   In almost all cases, these minimal axioms are also \emph{equivalent} to the theorem at hand (over a weak logical system).  The reversal, i.e.\ the derivation of the minimal axioms from the theorem, is often proved based on a recursive counterexample to the latter, as noted in \cite{rauwziel}*{p.\ 1368}.  

\smallskip

We refer to \cite{stillebron} for a layman introduction to RM and to \cite{simpson2, simpson1} for an overview of RM.  We expect basic familiarity with RM, but do sketch some aspects of Kohlenbach's \emph{higher-order} RM (\cite{kohlenbach2}) essential to this paper, including the base theory $\RCAo$ (Definition \ref{kase}).  
As will become clear, the latter is officially a type theory but can accommodate (enough) set theory via Definition \ref{strijker} below. 

\smallskip

First of all, in contrast to `classical' RM based on \emph{second-order arithmetic} $\Z_{2}$, higher-order RM uses $\L_{\omega}$, the richer language of \emph{higher-order arithmetic}.  
Indeed, while $\L_{2}$, the language of $\Z_{2}$, is restricted to natural numbers and sets of natural numbers, higher-order arithmetic can accommodate sets of sets of natural numbers, sets of sets of sets of natural numbers, et cetera.  
To formalise this idea, we introduce the collection of \emph{all finite types} $\mathbf{T}$, defined by the two clauses:
\begin{center}
(i) $0\in \mathbf{T}$   and   (ii)  if $\sigma, \tau\in \mathbf{T}$ then $( \sigma \di \tau) \in \mathbf{T}$,
\end{center}
where $0$ is the type of natural numbers, and $\sigma\di \tau$ is the type of mappings from objects of type $\sigma$ to objects of type $\tau$.
In this way, $1\equiv 0\di 0$ is the type of functions from numbers to numbers, and where  $n+1\equiv n\di 0$.  Viewing sets as given by characteristic functions, we note that $\Z_{2}$ only includes objects of type $0$ and $1$.    

\smallskip

Secondly, the language $\L_{\omega}$ includes variables $x^{\rho}, y^{\rho}, z^{\rho},\dots$ of any finite type $\rho\in \mathbf{T}$.  Types may be omitted when they can be inferred from context.  
The constants of $\L_{\omega}$ include the type $0$ objects $0, 1$ and $ <_{0}, +_{0}, \times_{0},=_{0}$  which are intended to have their usual meaning as operations on $\N$.
For a type $\tau$ given as $\tau\equiv(\tau_{1}\di \dots\di \tau_{k}\di 0)$, the associated equality is defined in terms of `$=_{0}$' as follows: for any objects $x^{\tau}, y^{\tau}$, we have
\be\label{aparth}
[x=_{\tau}y] \equiv (\forall z_{1}^{\tau_{1}}\dots z_{k}^{\tau_{k}})[xz_{1}\dots z_{k}=_{0}yz_{1}\dots z_{k}],
\ee

Furthermore, $\L_{\omega}$ also includes the \emph{recursor constant} $\mathbf{R}_{0}$ which allows for iteration on type $0$-objects as as described below in \eqref{special}.  Formulas and terms are defined as usual.  
One obtains the sub-language $\L_{n+2}$ by restricting the above type formation rule to produce only type $n+1$ objects (and related types of similar complexity).        
\bdefi\label{kase} 
The base theory $\RCAo$ consists of the following axioms.
\begin{enumerate}
 \renewcommand{\theenumi}{\alph{enumi}}
\item  Basic axioms expressing that $0, 1, <_{0}, +_{0}, \times_{0}$ form an ordered semi-ring with equality $=_{0}$.
\item Basic axioms defining the well-known $\Pi$ and $\Sigma$ combinators (aka $K$ and $S$ in \cite{avi2}), which allow for the definition of \emph{$\lambda$-abstraction}. 
\item The defining axiom of the recursor constant $\mathbf{R}_{0}$: For $m^{0}$ and $f^{1}$: 
\be\label{special}
\mathbf{R}_{0}(f, m, 0):= m \textup{ and } \mathbf{R}_{0}(f, m, n+1):= f(n, \mathbf{R}_{0}(f, m, n)).
\ee
\item The \emph{axiom of extensionality}: for all $\rho, \tau\in \mathbf{T}$, we have:
\be\label{EXT}\tag{$\textsf{\textup{E}}_{\rho, \tau}$}  
(\forall  x^{\rho},y^{\rho}, \varphi^{\rho\di \tau}) \big[x=_{\rho} y \di \varphi(x)=_{\tau}\varphi(y)   \big].
\ee 
\item The induction axiom for quantifier-free\footnote{To be absolutely clear, variables (of any finite type) are allowed in quantifier-free formulas of the language $\L_{\omega}$: only quantifiers are banned.} formulas of $\L_{\omega}$.
\item $\QFAC^{1,0}$: The quantifier-free Axiom of Choice as in Definition \ref{QFAC}.
\end{enumerate}
\edefi
\bdefi\label{QFAC} The axiom $\QFAC$ consists of the following for all $\sigma, \tau \in \textbf{T}$:
\be\tag{$\QFAC^{\sigma,\tau}$}
(\forall x^{\sigma})(\exists y^{\tau})A(x, y)\di (\exists Y^{\sigma\di \tau})(\forall x^{\sigma})A(x, Y(x)),
\ee
for any quantifier-free formula $A$ in the language of $\L_{\omega}$.
\edefi
Now $\RCAo$ and $\RCA_{0}$ prove the same sentences `up to language' as the latter is set-based and the former function-based.  
In particular, there is a function-based formalisation of $\RCA_0$ called $\RCA_0^2$, while $\RCA_0^\omega$ is a
conservative extension of $\RCA_0^2$ (see \cite{kohlenbach2}*{\S2} for details).
Recursion as in equation \eqref{special} is called \emph{primitive recursion}.  

\smallskip

We use the usual notations for natural, rational, and real numbers, and the associated functions, as introduced in \cite{kohlenbach2}*{p.\ 288-289}.  
\begin{defi}[Real numbers and related notions in $\RCAo$]\label{keepintireal}\rm~
\begin{enumerate}
 \renewcommand{\theenumi}{\alph{enumi}}
\item Natural numbers correspond to type zero objects, and we use `$n^{0}$' and `$n\in \N$' interchangeably.  Rational numbers are defined as signed quotients of natural numbers, and `$q\in \Q$' and `$<_{\Q}$' have their usual meaning.    
\item Real numbers are coded by fast-converging Cauchy sequences $q_{(\cdot)}:\N\di \Q$, i.e.\  such that $(\forall n^{0}, i^{0})(|q_{n}-q_{n+i}|<_{\Q} \frac{1}{2^{n}})$.  
We use Kohlenbach's `hat function' from \cite{kohlenbach2}*{p.\ 289} to guarantee that every $q^{1}$ defines a real number.  
\item We write `$x\in \R$' to express that $x^{1}:=(q^{1}_{(\cdot)})$ represents a real as in the previous item and write $[x](k):=q_{k}$ for the $k$-th approximation of $x$.    
\item Two reals $x, y$ represented by $q_{(\cdot)}$ and $r_{(\cdot)}$ are \emph{equal}, denoted $x=_{\R}y$, if $(\forall n^{0})(|q_{n}-r_{n}|\leq {2^{-n+1}})$. Inequality `$<_{\R}$' is defined similarly.  
We sometimes omit the subscript `$\R$' if it is clear from context.           
\item Functions $F:\R\di \R$ are represented by $\Phi^{1\di 1}$ mapping equal reals to equal reals, i.e.\ satisfying $(\forall x , y\in \R)(x=_{\R}y\di \Phi(x)=_{\R}\Phi(y))$.\label{EXTEN}
\item The relation `$x\leq_{\tau}y$' is defined as in \eqref{aparth} but with `$\leq_{0}$' instead of `$=_{0}$'.  Binary sequences are denoted `$f^{1}, g^{1}\leq_{1}1$', but also `$f,g\in C$' or `$f, g\in 2^{\N}$'.  Elements of Baire space are given by $f^{1}, g^{1}$, but also denoted `$f, g\in \N^{\N}$'.
\item For a binary sequence $f^{1}$, the associated real in $[0,1]$ is $\r(f):=\sum_{n=0}^{\infty}\frac{f(n)}{2^{n+1}}$.\label{detrippe}
\item Sets of type $\rho$ objects $X^{\rho\di 0}, Y^{\rho\di 0}, \dots$ are given by their characteristic functions $F^{\rho\di 0}_{X}\leq_{\rho\di 0}1$, i.e.\ we write `$x\in X$' for $ F_{X}(x)=_{0}1$. \label{koer} 
\end{enumerate}
\end{defi}
\noindent
The following special case of item \eqref{koer} is singled out, as it will be used frequently.
\bdefi[$\RCAo$]\label{strijker}
A `subset $D$ of $\N^{\N}$' is given by its characteristic function $F_{D}^{2}\leq_{2}1$, i.e.\ we write `$f\in D$' for $ F_{D}(f)=1$ for any $f\in \N^{\N}$.
A `binary relation $\preceq$ on a subset $D$ of $\N^{\N}$' is given by some functional $G_{\preceq}^{(1\times 1)\di 0}$, namely we write `$f\preceq g$' for $G_{\preceq}(f, g)=1$ and any $f, g\in D$.
Assuming extensionality on the reals as in item \eqref{EXTEN}, we obtain characteristic functions that represent subsets of $\R$ and relations thereon.  
Following Definition \ref{keepintireal}, it is clear we can also represent sets of finite sequences of reals, and relations thereon.  
\edefi
As is clear from the previous, the formula `$x<_{\R}y$' for $x, y\in \R$ is a $\Sigma_{1}^{0}$-formula.   
To make sure this inequality defines a binary relation in the sense of Definition \ref{strijker}, we shall always assume $(\exists^{2})$ from Section \ref{prelim2}, as also discussed in Section \ref{netskes}
The crux is that $\exists^{2}$ makes (in)equality on the reals decidable. 

\smallskip

For completeness, we also list notational conventions for finite sequences.  
\begin{nota}[Finite sequences]\label{skim}\rm
We assume a dedicated type for `finite sequences of objects of type $\rho$', namely $\rho^{*}$.  Since the usual coding of pairs of numbers goes through in $\RCAo$, we shall not always distinguish between $0$ and $0^{*}$. 
Similarly, we do not always distinguish between `$s^{\rho}$' and `$\langle s^{\rho}\rangle$', where the former is `the object $s$ of type $\rho$', and the latter is `the sequence of type $\rho^{*}$ with only element $s^{\rho}$'.  The empty sequence for the type $\rho^{*}$ is denoted by `$\langle \rangle_{\rho}$', usually with the typing omitted.  

\smallskip

Furthermore, we denote by `$|s|=n$' the length of the finite sequence $s^{\rho^{*}}=\langle s_{0}^{\rho},s_{1}^{\rho},\dots,s_{n-1}^{\rho}\rangle$, where $|\langle\rangle|=0$, i.e.\ the empty sequence has length zero.
For sequences $s^{\rho^{*}}, t^{\rho^{*}}$, we denote by `$s*t$' the concatenation of $s$ and $t$, i.e.\ $(s*t)(i)=s(i)$ for $i<|s|$ and $(s*t)(j)=t(j-|s|)$ for $|s|\leq j< |s|+|t|$. For a sequence $s^{\rho^{*}}$, we define $\overline{s}N:=\langle s(0), s(1), \dots,  s(N-1)\rangle $ for $N^{0}<|s|$.  
For a sequence $\alpha^{0\di \rho}$, we also write $\overline{\alpha}N=\langle \alpha(0), \alpha(1),\dots, \alpha(N-1)\rangle$ for \emph{any} $N^{0}$.  By way of shorthand, 
$(\forall q^{\rho}\in Q^{\rho^{*}})A(q)$ abbreviates $(\forall i^{0}<|Q|)A(Q(i))$, which is (equivalent to) quantifier-free if $A$ is.   
\end{nota}

\subsection{Some axioms of higher-order RM}\label{prelim2}
We introduce some functionals which constitute the counterparts of some of the Big Five systems, in higher-order RM.
We use the formulation from \cite{kohlenbach2, dagsamIII}.  
First of all, $\ACA_{0}$ is readily derived from:
\begin{align}\label{mu}\tag{$\mu^{2}$}
(\exists \mu^{2})(\forall f^{1})\big[ (\exists n)(f(n)=0) \di [(f(\mu(f))=0)&\wedge (\forall i<\mu(f))f(i)\ne 0 ]\\
& \wedge [ (\forall n)(f(n)\ne0)\di   \mu(f)=0]    \big], \notag
\end{align}
and $\ACA_{0}^{\omega}\equiv\RCAo+(\mu^{2})$ proves the same sentences as $\ACA_{0}$ by \cite{hunterphd}*{Theorem~2.5}.   The (unique) functional $\mu^{2}$ in $(\mu^{2})$ is also called \emph{Feferman's $\mu$} (\cite{avi2}), 
and is clearly \emph{discontinuous} at $f=_{1}11\dots$; in fact, $(\mu^{2})$ is equivalent to the existence of $F:\R\di\R$ such that $F(x)=1$ if $x>_{\R}0$, and $0$ otherwise (\cite{kohlenbach2}*{\S3}), and to 
\be\label{muk}\tag{$\exists^{2}$}
(\exists \varphi^{2}\leq_{2}1)(\forall f^{1})\big[(\exists n)(f(n)=0) \asa \varphi(f)=0    \big]. 
\ee
\noindent
Secondly, $\FIVE$ is readily derived from the following sentence:
\be\tag{$\SS^{2}$}
(\exists\SS^{2}\leq_{2}1)(\forall f^{1})\big[  (\exists g^{1})(\forall n^{0})(f(\overline{g}n)=0)\asa \SS(f)=0  \big], 
\ee
and $\FIVE^{\omega}\equiv \RCAo+(\SS^{2})$ proves the same $\Pi_{3}^{1}$-sentences as $\FIVE$ by \cite{yamayamaharehare}*{Theorem 2.2}.   The (unique) functional $\SS^{2}$ in $(\SS^{2})$ is also called \emph{the Suslin functional} (\cite{kohlenbach2}).
By definition, the Suslin functional $\SS^{2}$ can decide whether a $\Sigma_{1}^{1}$-formula as in the left-hand side of $(\SS^{2})$ is true or false.   We analogously define the functional $\SS_{k}^{2}$ for $k>0$ which decides the truth or falsity of $\Sigma_{k}^{1}$-formulas in Kleene normal form (see \cite{simpson2}*{V.1.4}); we also define 
the system $\SIXK$ as $\RCAo+(\SS_{k}^{2})$, where  $(\SS_{k}^{2})$ expresses that $\SS_{k}^{2}$ exists.  Note that we allow formulas with \emph{function} parameters, but \textbf{not} \emph{functionals} here.
In fact, Gandy's \emph{Superjump} (\cite{supergandy}) constitutes a way of extending $\FIVE^{\omega}$ to parameters of type two.  
For completeness, the functional $\exists^{2}$ is also called $\SS_{0}^{2}$ and the system $\ACAo$ is also called $\Pi_{0}^{1}\textsf{-CA}^{\omega}$.  We note that the operators $\nu_{n}$ from \cite{boekskeopendoen}*{p.\ 129} are essentially $\SS_{n}^{2}$ strengthened to return a witness (if existant) to the $\Sigma_{n}^{1}$-formula at hand.  

\smallskip

\noindent
Thirdly, full second-order arithmetic $\Z_{2}$ is readily derived from $\cup_{k}\SIXK$, or from:
\be\tag{$\exists^{3}$}
(\exists E^{3}\leq_{3}1)(\forall Y^{2})\big[  (\exists f^{1})Y(f)=0\asa E(Y)=0  \big], 
\ee
and we therefore define $\Z_{2}^{\Omega}\equiv \RCAo+(\exists^{3})$ and $\Z_{2}^\omega\equiv \cup_{k}\SIXK$, which are conservative over $\Z_{2}$ by \cite{hunterphd}*{Cor.\ 2.6}. 
Despite this close connection, $\Z_{2}^{\omega}$ and $\Z_{2}^{\Omega}$ can behave quite differently, as discussed in e.g.\ \cite{dagsamIII}*{\S2.2}.   The functional from $(\exists^{3})$ is also called `$\exists^{3}$', and we use the same convention for other functionals. 

\smallskip

Next, the Heine-Borel theorem states the existence of a finite sub-cover for an open cover of certain spaces. 
Now, a functional $\Psi:\R\di \R^{+}$ gives rise to the \emph{canonical cover} $\cup_{x\in I} I_{x}^{\Psi}$ for $I\equiv [0,1]$, where $I_{x}^{\Psi}$ is the open interval $(x-\Psi(x), x+\Psi(x))$.  
Hence, the uncountable cover $\cup_{x\in I} I_{x}^{\Psi}$ has a finite sub-cover by the Heine-Borel theorem; in symbols:
\be\tag{$\HBU$}
(\forall \Psi:\R\di \R^{+})(\exists  y_{1}, \dots, y_{k}\in I){(\forall x\in I)}(\exists i\leq k)(x\in I_{y_{i}}^{\Psi}).
\ee
Note that $\HBU$ is \emph{Cousin's lemma} (see \cite{cousin1}*{p.\ 22}), i.e.\ the Heine-Borel theorem restricted to canonical covers.  
The latter restriction does not make much of a big difference, as studied in \cite{sahotop}.
By \cite{dagsamIII, dagsamV}, $\Z_{2}^{\Omega}$ proves $\HBU$ but $\Z_{2}^{\omega}+\QFAC^{0,1}$ cannot, 
and many basic properties of the \emph{gauge integral} (\cite{zwette, mullingitover}) are equivalent to $\HBU$.  

\smallskip

Finally, we list the following comprehension axiom, first introduced in \cite{samph}.
\bdefi[$\BOOT$]
$(\forall Y^{2})(\exists X^{1})(\forall n^{0})\big[ n\in X \asa (\exists f^{1})(Y(f, n)=0)    \big]. $
\edefi  
\noindent
Clearly, $\BOOT$ is inspired by $(\exists^{3})$ and $\Z_{2}^{\omega}+\QFAC^{0,1}$ cannot prove $\BOOT$ by the results in \cite{dagsamIII, dagsamV, samnetspilot}.
In this sense, $\BOOT$ is much harder to prove than $\ACA_{0}$, while $\RCAo+\BOOT$ is still a conservative extension of $\ACA_{0}$. 
Nonetheless, $\SIXK+\BOOT$ proves $\Pi_{k+1}^{1}$-$\CA_{0}$, as shown in \cite{samph}

\smallskip

We finish this section with some historical remarks pertaining to $\BOOT$.
\begin{rem}[Historical notes]\label{hist}\rm
First of all, the bootstrap principle $\BOOT$ is definable in Hilbert-Bernays' system $H$ from the \emph{Grundlagen der Mathematik} (see \cite{hillebilly2}*{Supplement IV}).  In particular, the functional $\nu$ from \cite{hillebilly2}*{p.\ 479} immediately\footnote{The functional $\nu$ from \cite{hillebilly2}*{p.\ 479} is such that if $(\exists f^{1})A(f)$, the function $(\nu f)A(f)$ is the lexicographically least such $f^{1}$.  The formula $A$ may contain type two parameters, as is clear from e.g.\ \cite{hillebilly2}*{p.\ 481} and other definitions.} yields the set $X$ from $\BOOT$ (and similarly for its generalisations), viewing the type two functional $Y^{2}$ as a parameter (which is done throughout \cite{hillebilly2}*{Supplement~IV}).  Thus, $\BOOT$ and $\ACA_{0}$ share the same historical roots.  

\smallskip

Secondly, Feferman's axiom \textsf{(Proj1)} from \cite{littlefef} is similar to $\BOOT$.  The former is however formulated using sets, which makes it more `explosive' than $\BOOT$ in that full $\Z_{2}$ follows when combined with $(\mu^{2})$, as noted in \cite{littlefef}*{I-12}.  The axiom \textsf{(Proj1)} only became known to us after the results in \cite{samph} were finished. 
\end{rem}

\section{Main results}\label{main}
We establish the results sketched in Section \ref{farg}.  In each section, we study a known recursive counterexample and show that it lifts to interesting results in higher-order arithmetic/about uncountable mathematics.  
\subsection{Specker nets}\label{specknets}
In Section \ref{specknetssub}, we lift the implication involving the monotone convergence theorem \emph{for sequences} ($\MCT_{\seq}^{[0,1]}$) and arithmetical comprehension ($\ACA_{0}$) to higher-order arithmetic.
This results in an implication involving the \emph{monotone convergence theorem} for \emph{nets} indexed by Baire space and the comprehension axiom $\BOOT$ from Section \ref{prelim2}.  
Nets and associated concepts are introduced in Section \ref{netskes}.  These results are based on \emph{Specker sequences}, discussed next.   

\smallskip

Indeed, the proof that the monotone convergence theorem implies $\ACA_{0}$ from \cite{simpson2}*{III.2} is based on a recursive 
counterexample by Specker (\cite{specker}), who proved the existence of a computable increasing sequence of rationals in 
the unit interval that does not converge to any computable real number.  We show that these results lift to the higher-order setting in that 
\emph{essentially the same proof} yields that the monotone convergence theorem \emph{for nets} indexed by Baire space ($\MCT_{\net}^{[0,1]}$) implies $\BOOT$.  
In particular, the notion of \emph{Specker sequence} readily generalises to \emph{Specker net}.
We provide full details for this case, going as far as comparing the original and `lifted' proof side-by-side.  A much less detailed proof was first published in \cite{samph}.

\subsubsection{Nets: basics and definitions}\label{netskes}
We introduce the notion of net and associated concepts.  
Intuitively speaking, nets are the generalisation of sequences to (possibly) uncountable index sets; nets are essential for convergence in topology and domain theory.  
On a historical note, Moore-Smith and Vietoris independently introduced these notions about a century ago in \cites{moorsmidje, kliet}, which is why nets are also called \emph{Moore-Smith sequences}.
Nets and filters yield the same convergence theory, but e.g.\ third-order nets are represented by fourth-order filters, i.e.\ nets are more economical in terms of type complexity (see \cite{zonderfilter}).

\smallskip

We use the following standard definition from \cite{ooskelly}*{Ch.\ 2}.
\bdefi[Nets]\label{nets}
A set $D\ne \emptyset$ with a binary relation `$\preceq$' is \emph{directed} if
\begin{enumerate}
 \renewcommand{\theenumi}{\alph{enumi}}
\item The relation $\preceq$ is transitive, i.e.\ $(\forall x, y, z\in D)([x\preceq y\wedge y\preceq z] \di x\preceq z )$.
\item For $x, y \in D$, there is $z\in D$ such that $x\preceq z\wedge y\preceq z$.\label{bulk}
\item The relation $\preceq$ is reflexive, i.e.\ $(\forall x\in D)(x \preceq x)$.  
\end{enumerate}
For such $(D, \preceq)$ and topological space $X$, any mapping $x:D\di X$ is a \emph{net} in $X$.  
We denote $\lambda d. x(d)$ as `$x_{d}$' or `$x_{d}:D\di X$' to suggest the connection to sequences.  
The directed set $(D, \preceq)$ is not always explicitly mentioned together with a net $x_{d}$.
\edefi 
We only use directed sets that are subsets of $\N^{\N}$, i.e.\ as given by Definition \ref{strijker}.   
In other words, we only study nets $x_{d}:D\di \R$ where $D$ is a subset of $\N^{\N}$, but any binary relation is allowed.  
Thus, a net $x_{d}$ in $\R$ is just a type $1\di 1$ functional with extra 
structure on its domain $D$ provided by `$\preceq$' as in Definition~\ref{strijker}, i.e.\ part of \emph{third-order} arithmetic. 


\smallskip

The definitions of convergence and increasing net are of course familiar.  
\bdefi[Convergence of nets]\label{convnet}
If $x_{d}$ is a net in $X$, we say that $x_{d}$ \emph{converges} to the limit $\lim_{d} x_{d}=y\in X$ if for every neighbourhood $U$ of $y$, there is $d_{0}\in D$ such that for all $e\succeq d_{0}$, $x_{e}\in U$. 
\edefi
\bdefi[Increasing nets]\label{crco}
A net $x_{d}:D\di \R$ is \emph{increasing} if $a\preceq b$ implies $x_{a}\leq_{\R} x_{b} $ for all $a,b\in D$.
\edefi
Many (convergence) notions concerning sequences carry over to nets \emph{mutatis mutandis}.  A rather general RM study of nets may be found in \cites{samcie19, samwollic19, samph,samnetspilot}.
We shall study the monotone convergence theorem for nets as follows.  
\bdefi[$\MCT_{\net}^{[0,1]}$]
Any increasing net in $[0,1]$ indexed by $\N^{\N}$ converges. 
\edefi
As discussed just below Definition \ref{strijker}, we shall study $\MCT_{\net}^{[0,1]}$ assuming $\ACAo$ to guarantee 
that `$\leq_{\R}$' is a binary relation as in the former definition.  This is a mere technicality that does however simplify proofs.  

\smallskip

The `original' monotone convergence theorem for \emph{sequences} as in \cite{simpson2}*{III.2} is denoted $\MCT_{\seq}^{[0,1]}$.
The implications $\MCT_{\seq}^{[0,1]}\leftarrow \ACA_{0}$ and $\MCT_{\net}^{[0,1]}\leftarrow \BOOT$ are in fact proved in exactly the same way.   

\smallskip

Finally, sequences are nets with index set $(\N, \leq_{\N})$ and theorems pertaining to nets therefore apply to sequences.  
However, some care is advised as e.g.\ a sub-net of a sequence is not necessarily a \emph{sub-sequence} (see \cite{samnetspilot}*{\S3}).

\subsubsection{Specker nets and comprehension}\label{specknetssub}
In this section, we show that $\MCT_{\net}^{[0,1]}\di \BOOT$ using a minor variation of the well-known proof $\MCT_{\seq}^{[0,1]}\di \ACA_{0}$ from \cite{simpson2}*{III.2.2} involving Specker sequences.

\smallskip

First of all, we distill the essence of the latter proof, as follows.  
\begin{enumerate}
\renewcommand{\theenumi}{\roman{enumi}}
\item We prove $\MCT_{\seq}^{[0,1]}\di \range$, where the latter states that the range exists for injective\footnote{Note that $f^{1}$ is `injective' or `one-to-one' if $(\forall n^{0}, m^{0})(f(n)=f(m)\di n=m)$.   This condition guarantees that the Specker sequence $(c_{n})_{n\in \N}$ from item \eqref{speck} does not escape $[0,2]$.  For general (non-injective) $g^{1}$, one can define $d_{n}:= \sum_{i=0}^{n}\frac{K(i)}{2^{g(i)}}$ where $K(i)$ is $1$ if $(\forall j<i)(g(j)\ne g(i))$, and $0$ otherwise.  
The proof in items \eqref{firstlaste}-\eqref{speck3} goes through for $(c_{n})_{n\in \N}$ replaced by $(d_{n})_{n\in \N}$, while the proof of Theorem \ref{proofofconcept} can be modified similarly.\label{wiref}} functions $f^{1}$, i.e.\ $(\exists X\subset \N)(\forall k^{0})\big(k \in X\asa (\exists m^{0})(f(m)=k)\big)$.\label{firstlaste}
\item Fix injective $f^{1}$ and define the Specker sequence $c_{n}:=\sum_{i=0}^{n}2^{-f(i)}$.\label{speck}
\item Note that $\MCT_{\seq}^{[0,1]}$ applies and let $c$ be $\lim_{n\di \infty}c_{n}$.  \label{speck2}
\item Establish the following equivalence:\label{speck3}
\be\label{kikoporg}
(\exists m^{0})(f(m)=k)\asa (\forall n^{0})\big( |c_{n}-c|<2^{-k}\di (\exists i\leq n)(f(i)=k)     \big).
\ee
\item Apply $\Delta_{1}^{0}$-comprehension to \eqref{kikoporg}, yielding the set $X$ needed for $\range$.\label{korifee}
\end{enumerate}
We now show how to lift the previous steps to higher-order arithmetic, resulting in a proof of $\MCT_{\net}^{[0,1]}\di \BOOT$ in Theorem \ref{proofofconcept}.

\smallskip

Regarding item \eqref{korifee}, to lift proofs involving $\Delta_{1}^{0}$-comprehension to the higher-order framework, we introduce the following comprehension axiom: 
\begin{align}
(\forall Y^{2}, Z^{2})\big[ (\forall n^{0})\big( (\exists f^{1})&(Y(f, n)=0) \asa (\forall g^{1})(Z(g, n)=0) \big)\tag{$\Delta$-$\CA$} \\
& \di (\exists X^{1})(\forall n^{0})(n\in X\asa (\exists f^{1})(Y(f, n)=0)\big].\notag
\end{align}
A snippet of countable choice suffices to prove $\Delta$-comprehension, as follows.
\begin{thm}\label{DELTA}
The system $\RCAo+\QFAC^{0,1}$ proves $\Delta\text{-}\CA$.
\end{thm}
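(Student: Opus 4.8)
The plan is to collapse the $\Delta\text{-}\CA$ antecedent into a single choice statement over $\N$ whose matrix is quantifier-free, feed it to $\QFAC^{0,1}$, and read off the characteristic function of $X$ from the resulting selector. So fix $Y^{2}, Z^{2}$ satisfying $(\forall n^{0})\big[(\exists f^{1})(Y(f,n)=0)\asa (\forall g^{1})(Z(g,n)=0)\big]$, and set out to produce $X^{1}$ with $(\forall n^{0})\big[n\in X\asa (\exists f^{1})(Y(f,n)=0)\big]$.

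First I would prove, for each fixed $n^{0}$, the disjunction $(\exists h^{1})\big[Y(h,n)=0 \vee Z(h,n)\ne 0\big]$. This is where the $\Delta$-hypothesis is used: by excluded middle on $(\exists f^{1})(Y(f,n)=0)$, either this holds, and then any witness $f$ works as $h$ with $Y(h,n)=0$; or it fails, in which case the contrapositive of the given equivalence yields $(\exists g^{1})(Z(g,n)\ne 0)$, and any such $g$ works as $h$ with $Z(h,n)\ne 0$. Since $Y, Z$ are parameters, the matrix $Y(h,n)=0\vee Z(h,n)\ne 0$ is quantifier-free, so I may apply $\QFAC^{0,1}$ to $(\forall n^{0})(\exists h^{1})\big[Y(h,n)=0\vee Z(h,n)\ne 0\big]$ and obtain $H^{0\di 1}$ with $(\forall n^{0})\big[Y(H(n),n)=0\vee Z(H(n),n)\ne 0\big]$.

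Finally I would define $X$ by declaring $n\in X$ to mean $Y(H(n),n)=_{0}0$; as $Y(H(n),n)$ is a type-$0$ term and $=_{0}$ is decidable, the characteristic function of $X$ is given directly by $\lambda$-abstraction inside $\RCAo$, with no appeal to $\exists^{2}$ or any stronger comprehension. To verify correctness: if $(\exists f^{1})(Y(f,n)=0)$ then the antecedent gives $(\forall g^{1})(Z(g,n)=0)$, so $Z(H(n),n)=0$, which by the defining property of $H$ forces $Y(H(n),n)=0$, i.e.\ $n\in X$; conversely $n\in X$ means $Y(H(n),n)=0$, which directly witnesses $(\exists f^{1})(Y(f,n)=0)$. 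The main thing to get right is the reduction to the disjunctive form together with the check that its matrix is quantifier-free, since that is exactly what licenses the single use of $\QFAC^{0,1}$; everything after that stays within the base theory and is routine bookkeeping.
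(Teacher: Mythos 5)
Your proof is correct and takes essentially the same route as the paper's: both use the classical dichotomy on the $\Delta$-hypothesis to produce, for each $n$, a type-1 witness making a quantifier-free matrix true, apply $\QFAC^{0,1}$ exactly once, and read off the characteristic function of $X$ from the resulting selector. The only cosmetic difference is that the paper extracts a pair $(g,f)$ satisfying the implication $Z(g,n)=0\di Y(f,n)=0$, whereas you merge the two witnesses into a single $h$ satisfying the equivalent disjunction $Y(h,n)=0\vee Z(h,n)\ne 0$, which spares you the coding of a pair of type-1 objects.
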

\begin{proof}
The antecedent of $\Delta$-comprehension implies the following
\be\label{hani}
(\forall n^{0})(\exists g^{1}, f^{1})( Z(g, n)=0\di Y(f, n)=0 ).
\ee
Applying $\QFAC^{0,1}$ to \eqref{hani} yields $\Phi^{0\di 1}$ such that 
\be\label{fok}
(\forall n^{0})\big( (\forall g^{1})(Z(g, n)=0)\di Y(\Phi(n), n)=0 \big),
\ee
and by assumption an equivalence holds in \eqref{fok}, and we are done. 
\end{proof}
\noindent
The previous is not \emph{spielerei}: the crux of numerous reversals $T\di \ACA_{0}$ is that the theorem $T$ somehow allows for the reduction of $\Sigma_{1}^{0}$-formulas to $\Delta_{1}^{0}$-formulas, 
while $\Delta_{1}^{0}$-comprehension -included in $\RCA_{0}$- then yields the required $\Sigma_{1}^{0}$-comprehension, and $\ACA_{0}$ follows.  
Additional motivation for $\Delta\text{-}\CA$ is provided by Theorem \ref{motich}, 
while interesting RM results for $\Delta$-$\textsf{CA}$ can be found in \cite{dagsamX, dagsamIX}.

\smallskip

Regarding item \eqref{firstlaste}, lifting $\range$ to the higher-order framework is fairly basic: we just consider the existence of the range of \emph{type two} functionals (rather than type one functions), as in $\RANGE$ below.     
\begin{thm}\label{rage}
The system $\RCAo$ proves that $\BOOT$ is equivalent to 
\be\label{myhunt}\tag{$\RANGE$}
(\forall G^{2})(\exists X^{1})(\forall n^{0})\big[n\in X\asa (\exists f^{1})(G(f)=n)  ].
\ee
\end{thm}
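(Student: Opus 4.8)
The plan is to establish the two implications of the equivalence separately. In each direction I pass between the type-two functional appearing in one principle and the functional needed for the other by an explicit $\lambda$-definition available in $\RCAo$. The only genuine subtlety is that a type-two functional must be total, so when I encode a decidable ``membership'' condition as a range I must send all failing inputs to a dummy value; a shift by one then cleanly separates this dummy value from the information-bearing part of the range.

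First I would prove $\BOOT \di \RANGE$. Fix $G^{2}$ and define $Y^{2}$ by $Y(f,n):=0$ if $G(f)=_{0}n$, and $Y(f,n):=1$ otherwise. This is a legitimate functional definable from $G$ in $\RCAo$, since equality on type $0$ is decidable and the case distinction is therefore quantifier-free (so $Y$ is obtained by $\lambda$-abstraction). Applying $\BOOT$ to $Y$ yields $X^{1}$ with $n\in X \asa (\exists f^{1})(Y(f,n)=0) \asa (\exists f^{1})(G(f)=n)$, which is exactly the instance of $\RANGE$ for $G$.

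For the converse $\RANGE \di \BOOT$, fix $Y^{2}$ and use the standard primitive-recursive coding of a pair $\langle g, n\rangle$ consisting of a type-$1$ object $g$ and a type-$0$ object $n$ as a single type-$1$ object, which is available in $\RCAo$. Define $G^{2}$ on input $h^{1}$ by decoding $h=\langle g, n\rangle$ and setting $G(h):=n+1$ if $Y(g,n)=0$ and $G(h):=0$ otherwise; again the case distinction is quantifier-free, so $G$ is a total type-two object. For every $m^{0}$ one then checks $(m+1)\in \range(G) \asa (\exists g^{1})(Y(g,m)=0)$, the value $0$ being the only output not of the form $m+1$ and playing no further role. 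Applying $\RANGE$ to $G$ produces $X'^{1}$ with $k\in X' \asa (\exists h^{1})(G(h)=k)$, and then the desired witness for $\BOOT$ is obtained directly by $\lambda$-abstraction as $X:=\lambda n.\, X'(n+1)$, requiring no comprehension. Unwinding the definitions gives $n\in X \asa (\exists f^{1})(Y(f,n)=0)$, which is the instance of $\BOOT$ for $Y$.

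I expect the main obstacle to be bookkeeping rather than anything conceptual: one must verify that each auxiliary functional ($Y$ in the first direction, $G$ in the second) is genuinely definable in the base theory, i.e.\ that the pairing/unpairing maps and the case distinctions are quantifier-free and hence yield total type-two objects via $\lambda$-abstraction, and that the shift by one faithfully records membership without the dummy value $0$ contaminating the reading of the range. Neither point is difficult, so the equivalence should follow over $\RCAo$ with no appeal to choice or to comprehension beyond what the base theory already supplies.
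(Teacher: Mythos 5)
Your proof is correct and follows essentially the same route as the paper: the forward direction is the immediate specialisation of $\BOOT$, and the reverse direction uses the same device of coding the pair $(g,n)$ into a single type-$1$ input (the paper uses $\langle n\rangle * g$) with the value $n+1$ recording success and $0$ as dummy, followed by the same shift of the resulting set. The bookkeeping points you flag (totality of $G$ via quantifier-free case distinction, and the harmlessness of the dummy value) are exactly the "trivial modification" the paper invokes.
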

\begin{proof}
The forward direction is immediate.  For the reverse direction, define $G^{2}$ as follows for $n^{0}$ and $g^{1}$: put $G(\langle n\rangle *g)=n+1$ if $Y(g, n)=0$, and $0$ otherwise. 
Let $X\subseteq \N$ be as in $\RANGE$ and note that 
\be\label{taff}
(\forall m^{0}\geq 1 )( m\in X \asa (\exists f^{1})(G(f)=m)\asa (\exists g^{1})(Y(g, m-1)=0)  ).
\ee
which is as required for $\BOOT$ after trivial modification. 
\end{proof}
This theorem was first proved as \cite{samph}*{Theorem 3.19}.
Again, the previous is not a gimmick: reversals involving $\ACA_{0}$ are often established using $\range$, and those yield implications involving $\RANGE$, for instance as follows. 
\begin{thm}\label{proofofconcept}
The system $\ACAo+\Delta\text{-}\CA$ proves $\MCT_{\net}^{[0,1]}\di \BOOT$.
\end{thm}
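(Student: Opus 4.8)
The plan is to run the five-step template from the classical proof line by line, replacing sequences by nets indexed by (finite subsets of) Baire space and $\Delta_{1}^{0}$-comprehension by $\Delta\text{-}\CA$. By Theorem \ref{rage} it suffices to derive $\RANGE$, so I would fix $G^{2}$ and aim to produce the set $X=\{n:(\exists f^{1})(G(f)=n)\}$. The higher-order avatar of the injective $f$ in item \eqref{speck} is the \emph{directed set of finite sets of witnesses}: let $D\subseteq \N^{\N}$ consist of codes for finite sets of elements of $\N^{\N}$, ordered by $\preceq\,:=$ set inclusion. This is directed, since the union of two finite sets of functions is again one, and (assuming $\exists^{2}$) equality $f=_{1}f'$ and hence finite-set inclusion are decidable, so $\preceq$ is a genuine binary relation in the sense of Definition \ref{strijker}.

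Next I would build the Specker net itself. For $d\in D$ set $R_{d}:=\{m:(\exists f\in d)(G(f)=m)\}$, a finite set since $d$ is finite, and define
\[
x_{d}:=\textstyle\sum_{m\in R_{d}}2^{-(m+1)}.
\]
Then $x_{d}\in[0,1]$, and $d\preceq d'$ gives $R_{d}\subseteq R_{d'}$, so $x_{d}\leq_{\R}x_{d'}$: the net is increasing. Hence $\MCT_{\net}^{[0,1]}$ applies and yields a limit $x:=\lim_{d}x_{d}$; because the net is increasing this limit is its supremum, so $x_{d}\leq_{\R}x$ for all $d\in D$. This is the exact analogue of items \eqref{speck2}.

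The core is the higher-order version of the equivalence \eqref{kikoporg}, namely
\[
(\exists f^{1})(G(f)=n)\asa (\forall d\in D)\big(|x_{d}-x|<_{\R}2^{-(n+1)}\di (\exists f\in d)(G(f)=n)\big).
\]
For the forward direction, given a witness $f$ with $G(f)=n$ and any $d$ with $n\notin R_{d}$, the set $d':=d\cup\{f\}$ satisfies $d\preceq d'$ and $x_{d'}=x_{d}+2^{-(n+1)}$, so $x-x_{d}\geq 2^{-(n+1)}$ by $x_{d'}\leq_{\R}x$, contradicting $|x_{d}-x|<_{\R}2^{-(n+1)}$. For the reverse direction, convergence of the net supplies $d_{0}\in D$ with $|x_{d_{0}}-x|<_{\R}2^{-(n+1)}$, and the right-hand side then forces $n\in R_{d_{0}}$, i.e.\ some $f\in d_{0}$ has $G(f)=n$. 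Finally I would feed this to $\Delta\text{-}\CA$ via $Y(f,n):=0\leftrightarrow G(f)=_{0}n$ and $Z(g,n):=0$ expressing the matrix ``$g\in D\di(|x_{g}-x|<_{\R}2^{-(n+1)}\di(\exists f\in g)(G(f)=n))$''; the displayed equivalence is precisely the antecedent $(\forall n)((\exists f)(Y(f,n)=0)\asa(\forall g)(Z(g,n)=0))$, so $\Delta\text{-}\CA$ outputs $X$ witnessing $\RANGE$, and Theorem \ref{rage} converts this to $\BOOT$.

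The hard part — and the real content of the lifting — will be step four together with the quantifier-free expressibility needed for $\Delta\text{-}\CA$: the universal side of the equivalence is a priori $\Pi^{1}_{1}$ (it quantifies over all $d\in D\subseteq\N^{\N}$), yet $\exists^{2}\in\ACAo$ makes the real inequality $<_{\R}$ decidable, $G(f)=_{0}n$ and $g\in D$ are decidable, and $(\exists f\in g)(\dots)$ is a bounded finite disjunction, so the matrix of $Z$ is quantifier-free. This is exactly what places the $\Sigma^{1}_{1}$-versus-$\Pi^{1}_{1}$ pair inside the reach of $\Delta\text{-}\CA$ rather than requiring genuine $\Delta^{1}_{1}$-comprehension. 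A secondary technical obstacle is pinning down the coding of finite sets of Baire-space elements as single elements of $\N^{\N}$ and checking that inclusion is decidable, so that $(D,\preceq)$ is a bona fide directed set and $x_{d}$ meets the monotonicity and boundedness hypotheses of $\MCT_{\net}^{[0,1]}$.
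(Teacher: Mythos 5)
Your proposal is correct and follows essentially the same route as the paper's proof: a directed set of finite collections of Baire-space elements (the paper uses finite sequences $w^{1^{*}}$ with an injectivity side condition, ordered by membership-inclusion), the Specker net, the $\Sigma^{1}_{1}$-versus-$\Pi^{1}_{1}$ equivalence generalising \eqref{kikoporg}, then $\Delta\text{-}\CA$ and Theorem \ref{rage} to reach $\BOOT$. Your one deviation is a small improvement: by summing $2^{-(m+1)}$ over the \emph{value set} $R_{d}$ you deduplicate automatically, so you avoid the paper's condition \eqref{kat} and keep the net genuinely inside $[0,1]$, whereas the paper's net lands in $[0,2]$ and needs rescaling.
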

\begin{proof}
%
We shall establish $\RANGE$ and obtain $\BOOT$ by Theorem \ref{rage}, which mimics the above item \eqref{firstlaste}. 
Fix $Y:\N^{\N}\di \N$
and let $(D, \preceq_{D})$ be a directed set with $D$ consisting of the finite sequences $w^{1^{*}}$ in $\N^{\N}$ such that 
\be\label{kat}
(\forall i, j<|w|)(Y(w(i))=_{0}Y(w(j))\di i=_{0}j).
\ee
Define $v\preceq_{D} w $ if $ (\forall i<|v|)(\exists j<|w|)(v(i)=_{1}w(j))$ for any $v^{1^{*}}, w^{1^{*}}$. 
Note that $(\exists^{2})$ is necessary for this definition.  
Following item \eqref{speck}, fix some $Y^{2}$ and define the `Specker net' $c_{w}:D\di [0,1]$ as $c_{w}:= \sum_{i=0}^{|w|-1}2^{-Y(w(i))}$.  
Note that this net is in $[0,2]$ thanks to \eqref{kat}.  
Clearly, $c_{w}$ is increasing as in Definition \ref{crco} and let $c$ be the limit provided by $\MCT_{\net}^{[0,1]}$, following item \eqref{speck2}.  
Following item \eqref{speck3}, consider the following generalisation of \eqref{kikoporg}, for any $k\in \N$:
\be\label{kikop}
(\exists f^{1})(Y(f)=k)\asa (\forall w^{1^{*}})\big( |c_{w}-c|<2^{-k}\di (\exists g\in w)(Y(g)=k)     \big), 
\ee
for which the reverse direction is trivial thanks to $\lim_{w}c_{w}=c$.  
For the forward direction in \eqref{kikop}, assume the left-hand side holds for $f=f_{1}^{1}$ and fix some $w_{0}^{1^{*}}$ such that $|c-c_{w_{0}}|<\frac{1}{2^{k}}$.  
Since $c_{w}$ is increasing, we also have $|c-c_{w}|<\frac{1}{2^{k}}$ for $w\succeq_{D} w_{0}$.  
Now there must be $f_{0}$ in $w_{0}$ such that $Y(f_{0})=k$, as otherwise $w_{1}=w_{0}*\langle f_{1}\rangle$ satisfies $ w_{1}\succeq_{D}w_{0}$ but also $c_{w_{1}}>c$, which is impossible.  

\smallskip

Note that \eqref{kikop} has the right form to apply $\Delta\text{-}\CA$ (modulo obvious coding), and the latter provides the set required by $\RANGE$, following item \eqref{korifee}.
\end{proof}
We refer to the net $c_{w}$ from the proof as a \emph{Specker net} following the concept of \emph{Specker sequence} pioneered in \cite{specker}.
We hope that the reader agrees that the previous proof is \emph{exactly} the final part of the proof of \cite{simpson2}*{III.2.2} as in items \eqref{firstlaste}-\eqref{korifee}, save for the replacement of sequences by nets and functions by functionals. 
The aforementioned `reuse' comes at a cost however: the proof of $\MCT_{\net}^{[0,1]}\asa \BOOT$ in \cite{samph}*{\S3.2} does not make use of countable choice or $\Delta\text{-}\CA$.  
Moreover, from the proof of this equivalence, once can essentially `read off' that a realiser for $\MCT_{\net}^{[0,1]}$ computes $\exists^{3}$ in the sense of Kleene's S1-S9, and vice versa (see also \cite{samnetspilot}*{\S3.1}).   
It seems one cannot obtain this S1-S9 result from the above proof because of $\Delta\text{-}\CA$.

\smallskip

Now, Theorem \ref{proofofconcept} readily generalises by increasing the size of the index sets to any set of objects of finite type.  
The case of nets indexed by $\mathcal{N}\equiv\N^{\N}\di \N$ may be found in \cite{samph}*{Theorem 3.38}.
In particular, the monotone convergence theorem for nets indexed by $\mathcal{N}$ in $[0,1]$ implies the following comprehension axiom:
\be\label{myhunt1}\tag{$\RANGE^{1}$}
(\forall G^{3})(\exists X^{1})(\forall n^{0})\big[n\in X\asa (\exists Y^{2})(G(Y)=n)  ], 
\ee
which states the existence of the range of type three functionals.  It goes without saying that the proof of Theorem \ref{proofofconcept} readily\footnote{As is clear from \eqref{kikop}, the proof of Theorem \ref{proofofconcept} makes use of finite sequences $w^{1^{*}}$, which are readily coded as type $1$ objects in $\RCAo$.  The generalisation of the proof of Theorem \ref{proofofconcept} to $\mathcal{N}$ and $\RANGE^{1}$ thus uses finite sequences $w^{2^{*}}$.  Hence, one needs a version of $\Delta$-comprehension for the latter variables, which in turn follows from $\QFAC^{0,2^{*}}$, following Notation \ref{skim}.} lifts 
to nets indexed by $\mathcal{N}$. 
\bdefi[$\MCT_{\net}^{\mathcal{N}}$]
Any increasing net in $[0,1]$ indexed by $\mathcal{N}$ converges. 
\edefi
Following Theorems \ref{DELTA} and \ref{proofofconcept}, we have the following corollary.
\begin{cor}\label{corcor}
The system $\Z_{2}^{\Omega}+\QFAC^{0,2^{*}}$ proves $\MCT_{\net}^{ \mathcal{N}}\di \RANGE^{1}$.
\end{cor}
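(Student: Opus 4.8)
The plan is to lift, one type higher, the two ingredients that combined to give Theorem~\ref{proofofconcept}: first the derivation of a comprehension schema from choice as in Theorem~\ref{DELTA}, and then the Specker-net reversal itself. Concretely, I would work with the evident \emph{type-two} analogue of $\Delta\text{-}\CA$, obtained by replacing the witness quantifiers $(\exists f^{1})$ and $(\forall g^{1})$ throughout by $(\exists Y^{2})$ and $(\forall Z^{2})$ (and the parameters $Y,Z$ by functionals $G^{3},H^{3}$), show that $\QFAC^{0,2^{*}}$ proves it, and then re-run the proof of Theorem~\ref{proofofconcept} with every type-one object promoted to type two. Since the conclusion of the corollary is $\RANGE^{1}$ directly, no type-three version of Theorem~\ref{rage} is needed: the set produced by the lifted $\Delta$-step \emph{is} the range required by $\RANGE^{1}$.

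For the choice step, the antecedent of the type-two $\Delta$-CA yields $(\forall n^{0})(\exists Z^{2},Y^{2})(H(Z,n)=0\di G(Y,n)=0)$ by exactly the two-case argument that gave \eqref{hani}. The only genuine change is that one now selects, for each $n$, a \emph{pair} of type-two witnesses; coding such a pair as a length-two finite sequence of type-two objects (Notation~\ref{skim}), this is precisely an instance of $\QFAC^{0,2^{*}}$, producing $\Phi^{0\di 2^{*}}$. As in \eqref{fok}, the resulting equivalence $(\exists Y^{2})(G(Y,n)=0)\asa G((\Phi n)_{1},n)=0$ has a quantifier-free right-hand side, so the witnessing set $X^{1}$ is definable by a term in $\RCAo$. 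Note this step does not even invoke $\exists^{3}$, mirroring the fact that Theorem~\ref{DELTA} does not use $\exists^{2}$.

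For the reversal, fix $G^{3}$ and let $(D,\preceq_{D})$ consist of the finite sequences $w^{2^{*}}$ of elements of $\mathcal{N}$ whose $G$-values are pairwise distinct---the analogue of \eqref{kat} with $Y(w(i))$ replaced by $G(w(i))$---ordered by $v\preceq_{D}w$ iff every entry of $v$ occurs, as a type-two object, in $w$. Here $\exists^{3}$ plays exactly the role that $\exists^{2}$ played in Theorem~\ref{proofofconcept}: it makes equality $=_{2}$ on $\mathcal{N}$ decidable, so that $\preceq_{D}$ is a genuine binary relation in the sense of Definition~\ref{strijker}. Coding $w^{2^{*}}$ as a type-two object (as in the footnote to Theorem~\ref{proofofconcept}), $D$ becomes a subset of $\mathcal{N}$, so the increasing net $c_{w}:=\sum_{i=0}^{|w|-1}2^{-G(w(i))}$ is indexed by $\mathcal{N}$ and $\MCT_{\net}^{\mathcal{N}}$ supplies a limit $c$; distinctness keeps $c_{w}$ in $[0,2]$ and monotonicity is checked as before. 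One then establishes the lift of \eqref{kikop},
\[
(\exists Y^{2})(G(Y)=k)\asa (\forall w^{2^{*}})\big(|c_{w}-c|<2^{-k}\di (\exists Z\in w)(G(Z)=k)\big),
\]
whose right-hand side is a universal quantifier over a (coded) type-two variable with decidable matrix. Thus the displayed equivalence has precisely the shape demanded by the type-two $\Delta\text{-}\CA$, and applying the latter yields the set $X$ witnessing $\RANGE^{1}$.

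The main obstacle I anticipate is not any single deep step but getting the type-level bookkeeping exactly right: one must verify that finite sequences $w^{2^{*}}$ really do code as type-two objects, so that the net is honestly indexed by $\mathcal{N}$ rather than by some higher type; that $\exists^{3}$---and not merely $\exists^{2}$---is what decides $=_{2}$ and hence legitimises $\preceq_{D}$; and that the witness-selection in the $\Delta$-step is an instance of $\QFAC^{0,2^{*}}$ rather than of the weaker $\QFAC^{0,1}$. Once these three points are pinned down, the argument is word-for-word the proof of Theorem~\ref{proofofconcept} with all types raised by one.
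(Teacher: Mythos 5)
Your proposal is correct and is essentially the paper's own argument: the paper proves this corollary precisely by re-running Theorem \ref{proofofconcept} one type higher, noting (in a footnote) that the finite sequences $w^{2^{*}}$ require a version of $\Delta$-comprehension provable from $\QFAC^{0,2^{*}}$ exactly as in Theorem \ref{DELTA}, and remarking afterwards that $\exists^{3}$ is needed only to make the relation $\preceq_{D}$ decidable, just as $\exists^{2}$ was in the type-one case. Your three ``bookkeeping'' points (coding $w^{2^{*}}$ as type-two objects, $\exists^{3}$ for decidability of $=_{2}$, and the choice step being an instance of $\QFAC^{0,2^{*}}$) are exactly the points the paper itself flags.
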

One could weaken the base theory in the previous corollary by modifying the definition of net:  
the latter requires the binary relation `$\preceq$' to be decidable, for which $\exists^{2}$ (resp.\ $\exists^{3}$) is needed in Theorem \ref{proofofconcept} (resp.\ Corollary \ref{corcor}).
This is however the only spot where these functionals are needed. 

\smallskip

Finally, as noted in Footnote \ref{wiref}, while $\range$ is restricted to injective functions, one readily adapts the proof of \cite{simpson2}*{III.2.2} to work for general (non-injective) functions.  
The same holds for many of the proofs discussed below, and we will not always point this out explicitly.  Since $\range\asa \ACA_{0}$, it should be no surprise that 
this is possible.  The same remark hold for $\WKL$, $\Sigma_{1}^{0}$-separation, and the separation of the disjoint ranges of \emph{injective} functions, all equivalent by \cite{simpson2}*{IV.4.4}.  

\subsection{Compactness of metric spaces}\label{metricsect}
Complete separable metric spaces are represented (or: coded) in second-order arithmetic by countable dense subsets with a pseudo-metric (see e.g.\ \cite{simpson2, browner}).  
Various notions of compactness can then be formulated and their relations have been analysed in detail (see e.g.\ \cite{browner}).
In this section, we lift some of these results to higher-order arithmetic.

\smallskip

Our starting point is \cite{browner}*{Theorem 3.13}, which establishes the equivalence between $\ACA_{0}$ and \emph{a \(countable\) Heine-Borel compact complete metric space is totally bounded}.
The reverse implication is established via $\range$ and we shall lift this result to higher-order arithmetic.  
We shall make use of the standard definition of metric spaces, which does not use coding and can be found verbatim in e.g.\ \cite{rudin3,royden}.
Similar to Section \ref{netskes}, we assume $\ACAo$ to guarantee that `$=_{\R}$' yields a binary relation as in Definition \ref{strijker}.
\bdefi\label{donc}
A \emph{complete metric space $\tilde{D}$ over $\N^{\N}$} consists of $D\subseteq \N^{\N}$, an equivalence\footnote{An equivalence relation is a binary relation that is reflexive, transitive, and symmetric.} relation $=_{D}$, and $d: (D\times D)\di \R$ such that for all $ e, f, g \in D$:
\begin{enumerate}
 \renewcommand{\theenumi}{\alph{enumi}}
\item $d(e, f)=_{\R}0 \asa  e=_{D}f$,
\item $0\leq_{\R} d(e, f)=_{\R}d(f, e), $
\item $d(f, e)\leq_{\R} d(f, g)+ d(g, e)$, 
\end{enumerate}
and such that every Cauchy sequence in ${D}$ converges to some element in $D$.
\edefi
To be absolutely clear, the final condition regarding $\tilde{D}$ in the definition means that if $\lambda n.f_{n}$ is a sequence in $D$ such that $(\forall k^{0})(\exists N^{})(\forall  m^{0}, n^{0}\geq N)(d(f_{n}, f_{m})<_{\R}\frac{1}{2^{k}})$, then there is $g\in D$ such that 
$(\forall k^{0})(\exists n^{0})(\forall m\geq n)(d(f_{m}, g)<\frac{1}{2^{k}})$.
A \emph{point} in $\tilde{D}$ is just any element in $D$.  
Two points $e, f\in \tilde{D}$ are said to be \emph{equal} if $e=_{D}f$.  Note that the `hat function' from \cite{kohlenbach2} readily yields $\R$ as a metric space over $\N^{\N}$.

\smallskip

We shall use standard notation like $B(e, r)$ for the open ball $\{f\in D: d(f, e)<_{\R}r\}$.  The first item in Definition \eqref{donc} expresses a kind of extensionality property and we tacitly assume that 
every mapping with domain $D$ respects `$=_{D}$'.  
\bdefi[Heine-Borel]\label{koeka}
A complete metric space $\tilde{D}$ over $\N^{\N}$ is \emph{Heine-Borel compact} if for any $Y: {D}\di \R^{+}$, the cover $\cup_{e\in {D}} B(e, Y(e))$ has a finite sub-cover. 
\edefi
We define \emph{countable} Heine-Borel compactness as the previous definition restricted to \emph{countable} covers of $D$, i.e.\ there is a sequence $(d_{n})_{n\in \N}$ in $D$ and a sequence of rationals $(r_{n})_{n\in \N}$ such that $D\subset \cup_{n\in \N}B(d_{n}, r_{n})$. 
\bdefi[Totally bounded]\label{totallydude}
A complete metric space $\tilde{D}$ over $\N^{\N}$ is \emph{totally bounded} if there is a sequence of finite sequences $\lambda n.x_{n}^{0\di 1^{^{*}}}$ of points in $\tilde{D}$ such that for any $x\in {D}$ there is $n\in \N$ such that $d(x, x_{n}(i))<2^{-n}$ for some $i<|x_{n}|$.
\edefi
We now obtain the following theorem by lifting the proof\footnote{The proof of \cite{browner}*{Theorem 3.13} shows that $\range$ follows from item \eqref{honk} in Theorem \ref{cokkl} when formulated in $\L_{2}$.  
However, this proof goes through \emph{verbatim} for general functions, i.e.\ the restriction to injections (as in $\range$) is not necessary.} of \cite{browner}*{Theorem 3.13}.
We let $\IND$ be the induction axiom for all formulas in $\L_{\omega}$.  

\smallskip
\begin{thm}\label{cokkl}
$\ACAo+\IND$ proves that each of the following items:
\begin{enumerate}
 \renewcommand{\theenumi}{\alph{enumi}}
\item {a Heine-Borel compact complete metric space over $\N^{\N}$ is totally bounded},\label{honk}
\item item \eqref{honk} restricted to countable Heine-Borel compactness,\label{kanwel}
\item item \eqref{honk} with sequential compactness instead of Heine-Borel compactness,\label{kanwel2}
\end{enumerate}
 implies the comprehension axiom $\BOOT$.
\end{thm}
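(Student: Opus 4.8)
The plan is to derive $\RANGE$ for an arbitrary $G^{2}$ and then invoke Theorem~\ref{rage} to obtain $\BOOT$; a single constructed space will serve all three items. Following the proof of \cite{browner}*{Theorem 3.13}, I would lift the underlying second-order metric space—which encodes $\range(f)$ for injective $f^{1}$—to a complete metric space over $\N^{\N}$ built from $G^{2}$. Concretely, take the point set $D:=\N^{\N}$, the distinguished `limit point' $\mathbf{0}:=\lambda n.0$, and the map $\phi:D\di[0,1]$ given by $\phi(f):=0$ if $f=_{1}\mathbf{0}$ and $\phi(f):=2^{-G(f)}$ otherwise; put $d(e,f):=|\phi(e)-\phi(f)|$ and let $=_{D}$ identify points of equal $\phi$-value. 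Here $\exists^{2}$, available from $\ACAo$, is exactly what makes $f=_{1}\mathbf{0}$ and $=_{D}$ decidable, as required by Definition~\ref{donc} and the convention discussed below Definition~\ref{strijker}. The resulting $\tilde D$ is isometric to $\{0\}\cup\{2^{-k}:k\in\range(G)\}\subseteq[0,1]$, and is complete because this set is closed in $[0,1]$.

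Next I would verify, inside $\ACAo+\IND$, that $\tilde D$ enjoys each of the three compactness notions, so that the relevant hypothesis applies. For Heine--Borel compactness (item~\eqref{honk}), given $Y:D\di\R^{+}$ put $\delta:=Y(\mathbf{0})$ and fix $N$ with $2^{-N}<\delta$; then $B(\mathbf{0},\delta)$ already covers $\mathbf{0}$ together with every point of level $>N$ (i.e.\ with $G(f)>N$). It remains to cover the points of level $\le N$, of which there is at most one $=_{D}$-class per occupied level. The key move is to prove, by induction on $N$, the \emph{existence} of a finite sequence $w^{1^{*}}$ of points meeting every occupied level $\le N$: at each level one decides classically whether $(\exists f^{1})(G(f)=k)$ holds and, if so, appends such an $f$. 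Then $\{B(\mathbf{0},\delta)\}\cup\{B(w(i),Y(w(i))):i<|w|\}$ is the desired finite subcover. Countable Heine--Borel compactness (item~\eqref{kanwel}) follows a fortiori, and sequential compactness (item~\eqref{kanwel2}) I would obtain by applying the Bolzano--Weierstrass theorem for $[0,1]$—provable in $\ACAo$—to $\lambda m.\phi(f_{m})$ and then using $\exists^{2}$ to read off whether the limit is $0$ (so the subsequence tends to $\mathbf{0}$) or some $2^{-k_{0}}$ (so the subsequence is eventually $=_{D}$-constant).

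With compactness in hand, the assumed implication yields that $\tilde D$ is totally bounded, i.e.\ Definition~\ref{totallydude} supplies a sequence $\lambda n.x_{n}$ of finite $2^{-n}$-nets. A short calculation with the gaps between the values $2^{-k}$ then gives, for every $k$, the equivalence $(\exists f^{1})(G(f)=k)\asa(\exists i<|x_{k+2}|)(G(x_{k+2}(i))=k)$, since any net point within $2^{-(k+2)}$ of a level-$k$ point must itself have level exactly $k$. As the right-hand side is decidable (a bounded search using $\exists^{2}$, with $\lambda n.x_{n}$ as a parameter), arithmetical comprehension produces the set $X$ witnessing $\RANGE$; note that, in contrast to Theorem~\ref{proofofconcept}, no appeal to $\Delta\text{-}\CA$ is needed here. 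I expect the main obstacle to be the verification of \emph{uncountable} Heine--Borel compactness for item~\eqref{honk}: one cannot effectively locate a witness at each occupied level, since that is precisely the $\BOOT$-style search one is trying to construct, so the finite subcover must be shown to exist \emph{non-constructively}. This is exactly where full induction $\IND$, rather than mere $\Sigma_{1}^{0}$-induction, is required, which explains its presence in the hypothesis.
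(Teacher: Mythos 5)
Your proposal follows essentially the same route as the paper's proof: the paper likewise builds a complete metric space isometric to $\{0\}\cup\{2^{-k}:k\in \range(Y)\}$ (adjoining a \emph{fresh} zero point $0_{D}$ to $\N^{\N}$ rather than reusing $\lambda n.0$), verifies the three compactness properties---with $\IND$ invoked exactly where you invoke it, namely to assemble the finite subcover non-constructively---and then extracts $\RANGE$ from the total-boundedness sequence via a comprehension instance that is decidable given $\exists^{2}$, so that indeed no appeal to $\Delta\text{-}\CA$ is needed.

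One small slip, caused by your choice of $\mathbf{0}:=\lambda n.0$ as the distinguished point: since $\phi(\mathbf{0})=0$, the value $G(\mathbf{0})$ is discarded from the space, so the forward direction of your equivalence $(\exists f^{1})(G(f)=k)\asa(\exists i<|x_{k+2}|)(G(x_{k+2}(i))=k)$ fails in the edge case where $\mathbf{0}$ is the \emph{only} witness to $(\exists f^{1})(G(f)=k)$: the space then contains no point at level $k$, and the $2^{-(k+2)}$-net need not contain any $f$ with $G(f)=k$. This is trivially repaired: either relativise both sides of the equivalence to $f\ne_{1}\mathbf{0}$ and afterwards add the single number $G(\mathbf{0})$ to the resulting set $X$, or do as the paper does and take $D=\N^{\N}\cup\{0_{D}\}$ for a fresh (suitably coded) element $0_{D}$, so that every $f\in\N^{\N}$ contributes its level.
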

\begin{proof}
We derive $\RANGE$ from item \eqref{honk}, and $\BOOT$ is therefore immediate; the implication involving item \eqref{kanwel} is then immediate.  
Fix some $Y^{2}$ and define $D=\N^{\N}\cup \{0_{D}\}$ where $0_{D}$ is some special element.
Define $f=_{D}e$ as $Y(f)=_{0}Y(e)$ for any $e, f\in D\setminus\{0_{D}\}$, while $0_{D}=0_{D}$ is defined as true and $f=_{D}0_{D}$ is defined as false for $d\in D\setminus\{0_{D} \}$.  

\smallskip

Define $d:D^{2}\di \R$ as follows: $d(f, g)= |2^{-Y(f)}-2^{-Y(g)}|$ if $f, g\ne0_{D} $, $d(0_{D}, 0_{D})=0_{D}$, and $d(0_{D}, f)=d(f, 0_{D})= 2^{-Y(f)}$ for $f \ne 0_{D}$.   
Clearly, this is a metric space in the sense of Definition~\ref{donc} and the `zero element' $0_{D}$ satisfies $\lim_{n\di \infty}d(0_{D},f_{n} )=_{\R}0$, assuming $Y$ is unbounded on $\N^{\N}$ and $\lambda n.f_{n}$ is a sequence in $D$ witnessing this, i.e.\ $Y(f_{n})\geq n$ for any $n\in \N$.

\smallskip

Now, given a Cauchy sequence $\lambda n.f_{n}$ in $D$, either it converges to $0_{D}$ or $d(0_{D}, f_{n})$ is eventually constant, i.e.\ the completeness property of $\tilde{D}$ is satisfied.  
Moreover, the Heine-Borel property as in Definition \ref{koeka} is also straightforward, as any neighbourhood of $0_{D}$ covers all but finitely many $2^{-Y(f)}$ for $f\in \N^{\N}$ by definition.
One seems to need $\IND$ to form the finite sub-cover.  
Let $\lambda n.x_{n}$ be the sequence provided by item \eqref{honk} that witnesses that $\tilde{D}$ is totally bounded.   
Now define $X\subseteq \N$ as:
\be\label{furk}
 n\in X \asa (\exists i<|x_{n+1}|)\big[2^{-n}=_{\R}d(0_{D}, x_{n+1}(i))], 
\ee
and one readily shows that $n\in X\asa (\exists f^{1})(Y(f)=n)$, i.e.\ $\RANGE$ follows. 
Note that one can remove `$=_{\R}$' from \eqref{furk} in favour of a decidable equality. 

\smallskip

Regarding item \eqref{kanwel2}, if a sequence $\lambda n.f_{n}$ is `unbounded' as in $(\forall m^{0})(\exists n^{0})(Y(f_{n})> m)$, then there is an obvious sub-sequence that converges to $0_{D}$.  
In case we have $(\exists m^{0})(\forall n^{0})(Y(f_{n})\leq m)$, there is a constant sub-sequence, and the space $\tilde{D}$ is clearly sequentially compact. 
\end{proof}
%
Now, Definition \ref{totallydude} is used in RM (see e.g.\ \cites{simpson2, browner}) and is sometimes referred to as \emph{effective} total boundedness as there
is a sequence that enumerates the finite sequences of approximating points.  As it turns out, this extra information yields countable choice in the higher-order setting. 
Note that the monotone convergence theorem for nets \emph{with a modulus of convergence} similarly yields $\BOOT+\QFAC^{0,1}$ by \cite{samph}*{\S3.3}; obtaining countable choice in this context therefore seems normal.
\begin{cor}\label{rampant}
The system $\ACAo+\IND$ proves $[\text{item } \eqref{honk} \di \QFAC^{0,1}]$.
\end{cor}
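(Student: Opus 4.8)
The plan is to reuse the compact metric space from the proof of Theorem \ref{cokkl}, but to \emph{encode choice witnesses into the points} of the space, so that the concrete Baire-space elements produced by total boundedness can be decoded into a choice function. Fix a quantifier-free $A$, assume item \eqref{honk}, and assume the antecedent $(\forall n^{0})(\exists f^{1})A(n, f)$ of $\QFAC^{0,1}$; we must produce $Y^{0\di 1}$ with $(\forall n^{0})A(n, Y(n))$.

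First I would define a complete metric space $\tilde{D}$ over $\N^{\N}$ as follows. Let $D$ consist of a designated point $0_{D}$ together with all $\langle n\rangle * f$ such that $A(n, f)$; put $\langle n\rangle * f =_{D}\langle m\rangle * g$ iff $n=_{0}m$ (so the index $n$ determines the class, while the many representatives record distinct witnesses $f$), and let $0_{D}$ be its own class. Define $d(0_{D}, \langle n\rangle * f):=2^{-n}$ and $d(\langle n\rangle * f, \langle m\rangle * g):=|2^{-n}-2^{-m}|$. This makes $\tilde{D}$ isometric to the subset $\{0\}\cup\{2^{-n}:n\in\N\}$ of $\R$, so the metric axioms of Definition \ref{donc} are immediate, and, exactly as in Theorem \ref{cokkl}, every Cauchy sequence either converges to $0_{D}$ or is eventually constant in its index, which gives completeness; here $\exists^{2}$ is used to decide $=_{D}$ and the coding. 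The Heine-Borel property is verified as in Theorem \ref{cokkl}: for $Z:D\di\R^{+}$, the ball $B(0_{D}, Z(0_{D}))$ covers $0_{D}$ and all but finitely many classes, and one $\IND$-step collects a covering ball for each of the finitely many remaining nonempty classes, noting that any ball around a representative covers its entire class.

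Next I would apply item \eqref{honk} to $\tilde{D}$ to obtain the sequence $\lambda n.x_{n}$ witnessing total boundedness (Definition \ref{totallydude}), where each $x_{n}$ is a finite $2^{-n}$-net of points of $D$. The key point, and the whole reason the argument lifts, is that the $x_{n}(i)$ are \emph{actual elements of $\N^{\N}$}, not mere equivalence classes, so they carry witness data. To read off the choice function, fix $m$ and examine $x_{m+1}$: since class $m$ is nonempty by the antecedent, some $x_{m+1}(i)$ lies within $2^{-(m+1)}$ of a class-$m$ point. A short distance computation shows every point of $D$ outside class $m$ is at distance at least $2^{-(m+1)}$ from class $m$ (the nearest competitor being class $m+1$, at distance exactly $2^{-(m+1)}$, and $0_{D}$ at distance $2^{-m}$), so the strict inequality forces $x_{m+1}(i)\in$ class $m$, i.e.\ $x_{m+1}(i)=\langle m\rangle * f$ with $A(m, f)$. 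Hence I define $Y(m)$ to be the witness $f$ decoded from the least such $i<|x_{m+1}|$; this search is available using $\exists^{2}$ to test class membership, i.e.\ to compare $d(0_{D}, x_{m+1}(i))$ with $2^{-m}$ and decode the index, and $(\forall m)A(m, Y(m))$ holds because the decoded point lies in $D$.

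The main obstacle is conceptual rather than computational: one must arrange the space so that total boundedness is \emph{forced to exhibit a genuine Baire-space representative in each prescribed class}, and then isolate the correct net level ($m+1$) at which the $2^{-(m+1)}$-net can only approximate class $m$ by a point of class $m$ itself. Everything else—the metric axioms, completeness, and the Heine-Borel property—is the verbatim argument of Theorem \ref{cokkl}, which confirms the slogan that effective total boundedness silently carries countable choice.
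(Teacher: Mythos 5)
Your proposal is correct and follows essentially the same route as the paper: the paper's own proof of Corollary \ref{rampant} simply reuses the metric space from the proof of Theorem \ref{cokkl} and observes, via \eqref{furk}, that the points $x_{n+1}(i)$ of the total-boundedness net are themselves elements of Baire space which witness $(\exists f^{1})(Y(f)=n)$ whenever such a witness exists. Your only deviation---building $D$ directly from the witness codes $\langle n\rangle * f$ with $A(n,f)$ rather than encoding $A$ into a functional $Y^{2}$ and working with the full space of Theorem \ref{cokkl}---is cosmetic: the $\{0\}\cup\{2^{-n}:n\in \N\}$ geometry, the $2^{-(m+1)}$-separation computation forcing the approximant into class $m$, the use of $\IND$ for the finite sub-cover, and the decoding of witnesses are exactly the paper's argument.
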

\begin{proof}
In light of $n\in X\asa (\exists f^{1})(Y(f)=n)$ and \eqref{furk}, one of the $x_{n+1}(i)$ for $i<|x_{n+1}|$ provides a witness to $(\exists f^{1})(Y(f)=n)$, if such there is.  
\end{proof}
Finally, one can generalise the previous to higher types.  For instance, Definition~\ref{donc} obviously generalises \emph{mutatis mutandis} to yield the definition of {complete metric spaces $\tilde{D}$ {over $\mathcal{N}\equiv\N^{\N}\di \N$}}, and the same for any finite type.  As opposed to nets indexed by function spaces like $\mathcal{N}$, a metric space based on the latter is quite standard. 
The proof of Theorem \ref{cokkl} and Corollary \ref{rampant} can then be relativised.  
\begin{cor}\label{proofofconcepts}
$\Z^{\Omega}_{2}$ proves that the following:
\begin{enumerate}
 \renewcommand{\theenumi}{\alph{enumi}}
 \setcounter{enumi}{3}
\item {a countable Heine-Borel compact complete metric space over $\mathcal{N}$ is totally bounded},\label{honktoet}
\end{enumerate}
implies the comprehension axiom $\RANGE^{1}$.
\end{cor}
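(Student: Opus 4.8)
The plan is to relativise the derivation of $\RANGE$ from item \eqref{honk} in the proof of Theorem \ref{cokkl} exactly one type level upward, so that it yields $\RANGE^{1}$ from item \eqref{honktoet}. Fix $G^{3}$. I would build a complete metric space $\tilde{D}$ over $\mathcal{N}$ by setting $D:=\mathcal{N}\cup\{0_{D}\}$ for a fresh point $0_{D}$, declaring $Y=_{D}Z$ to mean $G(Y)=_{0}G(Z)$ (with $0_{D}=_{D}0_{D}$ true and $0_{D}=_{D}Y$ false for $Y\in\mathcal{N}$), and putting $d(Y,Z):=|2^{-G(Y)}-2^{-G(Z)}|$ for $Y,Z\ne 0_{D}$, together with $d(0_{D},Y)=d(Y,0_{D}):=2^{-G(Y)}$ and $d(0_{D},0_{D}):=0$. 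This is the verbatim image of the space from the proof of Theorem \ref{cokkl}, with the type-one function there replaced by the type-three functional $G$ and the type-one points $f^{1}$ replaced by the type-two points $Y^{2}\in\mathcal{N}$. Here $(\exists^{3})$, which $\Z_{2}^{\Omega}$ provides, plays precisely the role that $(\exists^{2})$ plays in Theorem \ref{cokkl}: it guarantees that $=_{D}$ and $=_{\R}$ are genuine binary relations on the type-two points of $D$ in the sense of (the relativisation of) Definitions \ref{donc} and \ref{strijker}.

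Next I would check that $\tilde{D}$ is a complete metric space over $\mathcal{N}$ that is countably Heine-Borel compact, with $0_{D}$ absorbing all but finitely many $=_{D}$-classes. Completeness is immediate since the distances all lie in $\{2^{-j}:j\in\N\}\cup\{0\}$: a Cauchy sequence either has $G$-value eventually constant (and converges to any representative of that class) or has $d(0_{D},\cdot)$ tending to $0$ (and converges to $0_{D}$). Granting countable Heine-Borel compactness, item \eqref{honktoet} applies and yields total boundedness, i.e.\ a sequence $\lambda n.x_{n}$ of finite sequences of points such that each $x_{m}$ is a $2^{-m}$-net for $D$. Following item \eqref{korifee}, and in analogy with \eqref{furk}, I would then define
\[
n\in X \asa (\exists i<|x_{n+1}|)\big(2^{-n}=_{\R}d(0_{D},x_{n+1}(i))\big),
\]
which is the decidable condition $(\exists i<|x_{n+1}|)(G(x_{n+1}(i))=n)$. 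The forward direction of $n\in X\asa(\exists Y^{2})(G(Y)=n)$ is trivial, as $x_{n+1}(i)$ is then a witness; for the reverse direction, a point $Y$ with $G(Y)=n$ lies at distance $2^{-n}$ from $0_{D}$, so the $2^{-(n+1)}$-net $x_{n+1}$ contains some $x_{n+1}(i)$ with $d(Y,x_{n+1}(i))<2^{-(n+1)}$, and the spacing of the values $2^{-j}$ forces $G(x_{n+1}(i))=n$. This is exactly $\RANGE^{1}$, whence the corollary; note that no fragment of choice is needed because the net sequence $\lambda n.x_{n}$ is handed to us (the total boundedness of Definition \ref{totallydude} being effective), which is why the base theory can be plain $\Z_{2}^{\Omega}$.

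I expect the one genuinely delicate step to be the verification that $\tilde{D}$ is countably Heine-Borel compact inside $\Z_{2}^{\Omega}$. A literal transcription of the subcover step of Theorem \ref{cokkl} would, for each value $v$ below the absorption threshold $k$, decide $(\exists Y^{2})(G(Y)=v)$ and extract a representative; but this is a type-two existential whose decision needs $(\exists^{4})$, which is \emph{not} available in $\Z_{2}^{\Omega}$. The way around this is to exploit that the cover is \emph{countable}: its centres form a sequence $\lambda n.d_{n}$, so $\lambda n.G(d_{n})$ is a type-one object, and the predicate ``$B(d_{n},r_{n})$ has the geometry to cover the (possibly empty) class of $G$-value $v$'' is arithmetical in this type-one object, hence decidable by $(\exists^{2})\subseteq(\exists^{3})$. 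One first finds the least $n_{0}$ with $0_{D}\in B(d_{n_{0}},r_{n_{0}})$ and reads off $k$ with $\{Z:G(Z)\ge k\}\subseteq B(d_{n_{0}},r_{n_{0}})$; then for each $v<k$ one searches for the least index whose ball covers value $v$, and bounds these finitely many indices together with $n_{0}$ by a single $N$. The finite family $\{B(d_{n},r_{n}):n\le N\}$ then covers $D$: emptiness of the $G$-fibres is never decided, since a ``false positive'' (a ball geometrically covering a non-existent class) only adds a harmless member to the subcover, while a genuinely occupied class $v<k$ is certainly covered because its point is covered by the original cover. Only bounded, arithmetical induction over $v<k$ is used, and this is available in $\Z_{2}^{\Omega}$; this is precisely why the countable setting dispenses with the $\IND$ needed in Theorem \ref{cokkl}. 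The same reorganisation relativises Corollary \ref{rampant}, and, as in Corollary \ref{corcor}, the whole argument pushes up to any finite type.
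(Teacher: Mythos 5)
Your proposal is correct and takes essentially the same route as the paper: the paper gives no separate argument for this corollary beyond the remark that Definition~\ref{donc} generalises \emph{mutatis mutandis} to spaces over $\mathcal{N}$ and that ``the proof of Theorem~\ref{cokkl} and Corollary~\ref{rampant} can then be relativised'', which is precisely the relativisation you carry out, including the lifted space built from $G^{3}$ and the analogue of \eqref{furk}. Your detailed verification that the lifted space is \emph{countably} Heine-Borel compact inside $\Z_{2}^{\Omega}$ --- searching the given sequence of balls by their geometry using $(\exists^{2})$ and tolerating ``false positives'', so that neither $(\exists^{4})$ nor induction beyond what $\Z_{2}^{\Omega}$ supplies is ever needed --- fills in the one step the paper leaves implicit and correctly accounts for the base theory being $\Z_{2}^{\Omega}$ rather than an analogue of $\ACAo+\IND$.
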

\begin{cor}\label{rampantq}
The system $\Z_{2}^{\Omega}$ proves the implication $[\text{item } \eqref{honktoet} \di \QFAC^{0,2}]$.
\end{cor}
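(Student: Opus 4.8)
The plan is to relativise the proof of Corollary \ref{rampant} one type level upward, exactly as Corollary \ref{proofofconcepts} relativises Theorem \ref{cokkl}. Recall that the proof of Corollary \ref{rampant} extracts, from the total-boundedness witness $\lambda n.x_{n}$ and the equivalence $n\in X\asa(\exists f^{1})(Y(f)=n)$, an actual witness $x_{n+1}(i)$ to $(\exists f^{1})(Y(f)=n)$ whenever such exists. The same extraction should work one type higher, now producing a witness of type two to $(\exists Y^{2})(G(Y)=n)$, thereby yielding $\QFAC^{0,2}$ rather than $\QFAC^{0,1}$.

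Concretely, I would first set up the metric space $\tilde{D}$ over $\mathcal{N}$ associated with an arbitrary $G^{3}$, mirroring the construction in the proof of Theorem \ref{cokkl}: take $D=\mathcal{N}\cup\{0_{D}\}$, define $F=_{D}E$ as $G(F)=_{0}G(E)$ for $E,F\neq 0_{D}$, and set $d(F,G')=|2^{-G(F)}-2^{-G(G')}|$ for $F,G'\neq 0_{D}$ with $d(0_{D},F)=2^{-G(F)}$. As observed in Corollary \ref{proofofconcepts}, this is a complete, (countably) Heine-Borel compact metric space over $\mathcal{N}$, and item \eqref{honktoet} applied to it produces a sequence $\lambda n.x_{n}$ of finite sequences of points witnessing total boundedness. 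The key point, inherited from the proof of Theorem \ref{cokkl}, is the equivalence $n\in X\asa(\exists Y^{2})(G(Y)=n)$ obtained via the analogue of \eqref{furk}, where the approximating points $x_{n+1}(i)$ live in $D\subseteq\mathcal{N}$.

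Given an instance $(\forall n^{0})(\exists Y^{2})A(n,Y)$ of the antecedent of $\QFAC^{0,2}$, I would apply item \eqref{honktoet} to the $G^{3}$ coding the relevant existential, so that for each $n$ with a witness, some $x_{n+1}(i)$ with $i<|x_{n+1}|$ is such a witness; since $\Z_{2}^{\Omega}$ proves $(\exists^{3})$, it can decide $(\exists i<|x_{n+1}|)A(n,x_{n+1}(i))$ and hence select the least such $i$, defining the required choice functional $\Phi^{0\di 2}$. This is precisely the reasoning of Corollary \ref{rampant}, with the base theory strengthened from $\ACAo+\IND$ to $\Z_{2}^{\Omega}$ to handle the type-two witnesses and the decidability needed at the higher level.

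I expect the main obstacle to be purely bookkeeping rather than conceptual: one must check that the finite sequences $x_{n+1}$ of \emph{type-two} points are handled correctly and that the equality $=_{D}$, now defined via $G(F)=_{0}G(E)$ on type-two points, together with the metric $d$, still genuinely yields a complete metric space over $\mathcal{N}$ in the sense of the relativised Definition \ref{donc}. Here $\Z_{2}^{\Omega}$, which proves $(\exists^{3})$ and hence $\RANGE^{1}$ by Corollary \ref{proofofconcepts}, supplies the discontinuous functionals needed to make $=_{D}$ and the relevant inequalities decidable, so that the witness-extraction step goes through verbatim. No new idea beyond the relativisation is needed, and I would simply remark that the proof of Corollary \ref{rampant} lifts \emph{mutatis mutandis}, with $\QFAC^{0,1}$ replaced by $\QFAC^{0,2}$ throughout.
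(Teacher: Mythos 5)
Your proposal is correct and is exactly the paper's intended argument: the paper states Corollary \ref{rampantq} without a separate proof, immediately after remarking that the proofs of Theorem \ref{cokkl} and Corollary \ref{rampant} ``can then be relativised'', which is precisely the type-lifted witness-extraction you carry out. The details you supply (the metric space over $\mathcal{N}$ built from $G^{3}$, the analogue of \eqref{furk}, and the appeal to $(\exists^{3})$ for the decidability needed to select witnesses of type two) fill in that relativisation \emph{mutatis mutandis}, so no discrepancy with the paper's route arises.
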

Note that $\RANGE^{1}$ was first introduced in \cite{samph}*{\S3.7} and follows from the monotone convergence theorem for nets \emph{indexed by} $\mathcal{N}$.
In fact, the usual proof of the monotone convergence theorem involving Specker sequences immediately generalises to Specker nets indexed by $\mathcal{N}$, as discussed in Section \ref{specknetssub}.
Similarly, item~\eqref{honk} from Theorem \ref{cokkl} for `larger' spaces yields $\QFAC^{0,n}$ for larger $n$ and $\RANGE$ generalised to higher-order functionals with `larger' domain.  

%

\subsection{Closed sets in metric spaces}\label{clookes}
The notion of `closed set' can be defined in various (equivalent) ways, e.g.\ as the complement of an open set, or as the union of a set together with its limit points.  
The connection between these different definitions in RM is studied in \cite{browner2}*{\S2} and we shall lift one of these results to higher-order arithmetic, namely 
that \emph{a separably RM-closed set in $[0,1]$ is also RM-closed} implies $\range$, as proved in \cite{browner2}*{Theorem 2.9}.

\smallskip

First of all, a \emph{separably RM-closed} set $\overline{S}$ in a metric space is given in RM by a sequence $\lambda n.x_{n}$ and `$x\in \overline{S}$' is then $(\forall k^{0})(\exists n^{0})(d(x,x_{n})<\frac{1}{2^{k}})$, 
where $d$ is the metric of the space.  Intuitively, the set is represented by a countable dense subset.   
An \emph{RM-closed set} is given by the complement of an RM-open set, and the latter is given by a countable union of open intervals.  
In this way, $\Sigma_{1}^{0}$ (resp.\ $\Pi_{1}^{0}$) formulas exactly correspond to open (resp.\ closed) sets (see \cite{simpson2}*{II.5.7}).  

\smallskip

Secondly, it is well-known that $\ZF$ cannot prove that `$\R$ is a sequential space', i.e.\ the equivalence between the definition of closed and sequentially closed set; countable choice however suffices (see \cite{heerlijkheid}*{p.\ 73}).   
On the other hand, we can avoid the Axiom of Choice by replacing sequences with nets everywhere, as shown in \cite{samnetspilot}*{\S4.4}.  
In this light, the following definition and theorem make sense.   
\bdefi
A separably closed set $\overline{S}$ in $\R$ is given by a net $x_{d}:D\di \Q$ with $D\subseteq\N^{\N}$ and where $x\in \overline{S}$ is given by  $(\forall k^{0})(\exists d\in D)(|x-x_{d}|\leq \frac{1}{2^{k}})$.
\edefi
\begin{thm}[$\CLO$]
A separably closed set in $\R$ is RM-closed.  
\end{thm}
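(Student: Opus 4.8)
The plan is to produce, directly from the net $x_{d}:D\di\Q$ representing $\overline{S}$, an explicit RM-open set $U$ (a sequence of rational open intervals) whose complement in $\R$ is exactly $\overline{S}$. The guiding observation is that $\R\setminus\overline{S}$ is the union of all rational open intervals that miss $\overline{S}$, and that for an \emph{open} interval ``missing the closure $\overline{S}$'' coincides with ``missing the range $\{x_{d}:d\in D\}$ of the net.'' The only substantial ingredient is to decide, for each rational interval, whether it meets that range; this is precisely where $\BOOT$ enters, mirroring the way the second-order proof of \cite{browner2}*{Theorem 2.9} uses $\range$ (equivalently $\Sigma_1^0$-comprehension) to assemble the open complement.

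Concretely, I would first fix a primitive-recursive enumeration $n\mapsto(p_{n},q_{n})$ of all pairs of rationals with $p_{n}<_{\Q}q_{n}$, so that every rational open interval occurs. Then I would establish, for each $n$, the auxiliary equivalence
\[
(p_{n},q_{n})\cap\overline{S}=\emptyset \quad\asa\quad (\forall f^{1})\big(f\in D\di \neg(p_{n}<_{\Q}x_{f}<_{\Q}q_{n})\big).
\]
The left-to-right direction is immediate, since every $x_{d}$ lies in $\overline{S}$ (take $d$ itself in the defining clause). For the converse, if some $y\in\overline{S}$ lay in the open interval $(p_{n},q_{n})$, then choosing $k$ with $2^{-k}$ below the distance from $y$ to the endpoints and invoking $(\forall k)(\exists d\in D)(|y-x_{d}|\le 2^{-k})$ would yield an $x_{d}\in(p_{n},q_{n})$, a contradiction.

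With this equivalence available, I would define the type-two functional $Y^{2}$ by letting $Y(f,n)=0$ exactly when $f\in D$ (tested via the characteristic functional $F_{D}^{2}$) and $p_{n}<_{\Q}x_{f}<_{\Q}q_{n}$; because the net takes \emph{rational} values this matrix is decidable, so $Y$ is a legitimate functional of $\RCAo$. Applying $\BOOT$ gives a set $X\subseteq\N$ with $n\in X \asa (\exists f^{1})(Y(f,n)=0)$, i.e.\ $n\in X$ iff $(p_{n},q_{n})$ meets the range of the net, hence (by the auxiliary equivalence) iff it meets $\overline{S}$. I would read off $U$ as the sequence whose $n$-th interval is $(p_{n},q_{n})$ when $n\notin X$ and a degenerate interval when $n\in X$; this is a genuine open-set code in $\RCAo$ since membership in $X$ is decidable. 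It then remains to verify $x\notin\overline{S}\asa x\in U$: if $x\notin\overline{S}$, then $(\exists k)(\forall d\in D)(|x-x_{d}|>2^{-k})$, so any rationals $p_{n}<x<q_{n}$ chosen inside $(x-2^{-k},x+2^{-k})$ give $n\notin X$ and $x\in(p_{n},q_{n})$; conversely each interval of $U$ is open and disjoint from $\overline{S}$, so contains no point of $\overline{S}$. Thus $\overline{S}=\R\setminus U$ is RM-closed.

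The step I expect to be the main obstacle, and the reason the result is a genuine lifting rather than a triviality, is the formation of $X$. The disjointness predicate is $\Sigma_1^1$ in the \emph{type-two} parameters $x$ and $F_{D}$, and so lies beyond the Suslin functional $\SS^2$, which decides $\Sigma_1^1$-formulas only with \emph{function} parameters; it is exactly $\BOOT$ that turns such an $(\exists f^{1})$-predicate over a type-two matrix into a set, so the natural base theory is $\RCAo+\BOOT$ (one may instead route through $\RANGE$ via Theorem \ref{rage}), and $\exists^2$ is not needed since the net is rational-valued. A secondary point demanding care is the closure-versus-range equivalence displayed above, which is precisely what guarantees that the crude ``does this interval meet the net?'' test delivered by $\BOOT$ computes the complement of the \emph{closure} $\overline{S}$ rather than merely of the range.
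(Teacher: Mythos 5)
Your proof is correct, but it runs in the opposite direction from anything the paper actually proves: the paper states $\CLO$ as a named principle \emph{without proof} (it is a theorem of sufficiently strong systems, e.g.\ $\Z_{2}^{\Omega}$), and its sole result about $\CLO$ is the reversal, Theorem \ref{dontlabel}, showing that $\ACAo+\Delta\text{-}\CA$ proves $\CLO\di\RANGE$ by lifting Brown's second-order argument. What you supply is the missing `upper bound': $\RCAo+\BOOT$ proves $\CLO$. Your key steps are sound: the range $\{x_{d}:d\in D\}$ is contained in $\overline{S}$ (take $d$ itself in the defining clause), so $\overline{S}$ is exactly the closure of the range, and a rational open interval misses $\overline{S}$ iff it misses the range; the test `$(p_{n},q_{n})$ meets the range' is a legitimate $\BOOT$-instance since the matrix $f\in D\wedge p_{n}<_{\Q}x_{f}<_{\Q}q_{n}$ is decidable (the net is $\Q$-valued, so indeed no $(\exists^{2})$ is needed for your direction, although the paper assumes $(\exists^{2})$ throughout this section anyway); and the resulting set $X$ yields a genuine RM-code for the open complement. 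Note that your auxiliary equivalence is the mirror image of the paper's \eqref{equikes}, which is used there to \emph{extract} comprehension from a closed-set code rather than to build one. Combined with Theorems \ref{dontlabel} and \ref{rage}, your direction upgrades the paper's one-sided implication to an equivalence between $\CLO$ and $\BOOT$ over $\ACAo+\Delta\text{-}\CA$, in line with the equivalence $\MCT_{\net}^{[0,1]}\asa\BOOT$ from \cite{samph}. Your closing diagnosis is also accurate for exactly the reason the paper gives: the functionals $\SS_{k}^{2}$ accept only \emph{function} parameters, and $\Z_{2}^{\omega}+\QFAC^{0,1}$ does not prove $\BOOT$, so no amount of comprehension on that scale replaces the $\BOOT$-step. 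One cosmetic remark: the directedness of $(D,\preceq)$ plays no role in your argument, which is harmless since the definition of $\overline{S}$ never uses it.
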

The following is obtained by lifting the proof\footnote{The proof of \cite{browner2}*{Theorem 2.9} shows that $\range$ follows from $\CLO$ for separably RM-closed sets.  
However, this proof goes through \emph{verbatim} for general functions, i.e.\ the restriction to injections (as in $\range$) is not necessary.} of \cite{browner2}*{Theorem 2.9}.  
As in Section~\ref{netskes}, we assume $(\exists^{2})$ to guarantee `$\leq_{\R}$' is a binary relation as in Definition~\ref{strijker}. 
\begin{thm}\label{dontlabel}
The system $\ACAo+\Delta\text{-}\CA$ proves $\CLO\di \RANGE$.
\end{thm}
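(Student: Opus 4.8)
The plan is to derive $\RANGE$ directly from $\CLO$, closely following the second-order argument of \cite{browner2}*{Theorem 2.9} that `a separably RM-closed set is RM-closed' implies $\range$, but with functions replaced by functionals and sequences by nets. So I would fix $G^{2}$ and aim to produce the set $X=\{n: (\exists f^{1})(G(f)=n)\}$ required by $\RANGE$. The underlying idea is to encode the range of $G$ into membership in a suitable separably closed set, then invoke $\CLO$ to obtain a second, $\Pi_{1}^{0}$-style description of that same membership, and finally glue the two descriptions together by means of $\Delta\text{-}\CA$ (this last step being the exact analogue of passing from \eqref{kikop} to $\RANGE$ in Theorem \ref{proofofconcept}).

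For the construction I would equip $D:=\N^{\N}$ with the pointwise order $f^{1}\preceq g^{1}\equiv (\forall n^{0})(f(n)\leq g(n))$, which is directed (take pointwise maxima) and decidable by $(\exists^{2})$, so that $(D,\preceq)$ is a directed set in the sense of Definition \ref{nets}. I then define the net $x_{f}:D\di\Q$ by $x_{f}:=2^{-G(f)}$ and let $\overline{S}$ be the separably closed set it represents, so that $y\in\overline{S}\asa(\forall k^{0})(\exists f^{1})(|y-2^{-G(f)}|\leq 2^{-k})$. The crucial observation is the equivalence
\[
2^{-n}\in\overline{S}\ \asa\ (\exists f^{1})(G(f)=n)\qquad\text{for every }n^{0}.
\]
Indeed, if $G(f_{0})=n$ then $x_{f_{0}}=2^{-n}$ witnesses membership, while if $n\notin\range(G)$ then every value $2^{-G(f)}$ lies at distance at least $2^{-(n+1)}$ from $2^{-n}$, so the defining condition already fails at $k=n+2$. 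This discreteness estimate is precisely the place where the corresponding step of \cite{browner2} is reused.

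Applying $\CLO$ to $\overline{S}$ yields an RM-open set $U=\cup_{m}I_{m}$, a countable union of open intervals $I_{m}=(a_{m},b_{m})$, with $\overline{S}=\R\setminus U$, whence $2^{-n}\in\overline{S}\asa(\forall m^{0})(2^{-n}\notin I_{m})$, a condition whose single step is decidable by $(\exists^{2})$. At this point I have two descriptions of the same predicate, namely the existential one $(\exists f^{1})(G(f)=n)$ coming from the net and the universal one $(\forall m^{0})(2^{-n}\notin I_{m})$ coming from $\CLO$. Setting $Y(f,n):=0\asa G(f)=n$ and $Z(g,n):=0\asa 2^{-n}\notin I_{g(0)}$, both legitimate type-two functionals (the latter by $(\exists^{2})$), the equivalence of these two descriptions is exactly the hypothesis $(\forall n^{0})\big[(\exists f^{1})(Y(f,n)=0)\asa(\forall g^{1})(Z(g,n)=0)\big]$ of $\Delta\text{-}\CA$. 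That axiom then delivers $X^{1}$ with $n\in X\asa(\exists f^{1})(G(f)=n)$, which is precisely $\RANGE$.

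The main obstacle is conceptual rather than computational: $\CLO$ supplies only the $\Pi_{1}^{0}$ (RM-closed) half of the membership predicate, so one must be careful that the separately-available existential half is \emph{provably} equivalent to it — this is exactly what the discreteness estimate guarantees — and that the RM-open datum furnished by $\CLO$ is decidable step-by-step so that $Z$ is a genuine functional. As in Theorem \ref{proofofconcept}, reusing the second-order proof verbatim comes at the cost of $\Delta\text{-}\CA$ (hence, via Theorem \ref{DELTA}, a snippet of countable choice), which is where the `lifting overhead' beyond the bare base theory enters.
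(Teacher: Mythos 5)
Your proof is correct and follows essentially the same route as the paper's: both encode the range of $G^{2}$ as the separably closed set generated by the net of values $2^{-G(f)}$, use the discreteness of powers of two to establish $2^{-n}\in\overline{S}\asa(\exists f^{1})(G(f)=n)$, and then combine this existential description with the $\Pi_{1}^{0}$ description supplied by $\CLO$ via an application of $\Delta\text{-}\CA$. The only differences are inessential implementation details: you use the pointwise order on $D=\N^{\N}$ where the paper uses the lexicographic order on $D=\N^{\N}\times\N$ (with an added zero value), and you spell out the $\Delta\text{-}\CA$ instance more explicitly than the paper does.
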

\begin{proof}
Fix some $Y^{2}$, define $D=(\N^{\N}\times \N)$ and let $\overline{S}$ be given by the net $x_{d}$ defined as $ 2^{-Y(f)}$ if $d=(f, 0)$, and zero otherwise. 
The lexicographic order on $\N^{\N}$ yields a directed set (using $\exists^{2}$).  By $\CLO$, $\overline{S}$ is also RM-closed, i.e.\ there is some $\Pi_{1}^{0}$-formula $\varphi(x)$ such that $x\in \overline{S}\asa \varphi(x)$.  
Hence, $\Delta\text{-}\CA$ (modulo $\exists^{2}$) applies to:
\be\label{equikes}
2^{-n}\in \overline{S}\asa  (\exists f^{1})(Y(f)=n),
\ee
yielding $\RANGE$.  To prove \eqref{equikes}, the reverse direction is immediate.  For the forward direction, if $2^{-n}\in \overline{S}$, then by definition $(\exists d_{0}\in D)(|2^{-n}-x_{d_{0}}|\leq \frac{1}{2^{k_{0}}})$, 
where $k_{0}$ is large enough to guarantee that $\frac{1}{2^{n+1}}< \frac{1}{2^{n}}-\frac{1}{2^{k_{0}}}$.  For such $d_{0}\in D$, we must have $d_{0}=(f_{0}, 0)$ and $Y(f_{0})=n$ by the definition of $x_{d}$, and \eqref{equikes} follows.
\end{proof}
Note that we can use any notion of closed set in the consequent of $\CLO$, as long as $\Delta\text{-}\CA$ still applies to \eqref{equikes}.  
In fact, we study a (more) general notion of open set in \cite{samph}*{\S4} based on the right-hand side of $\BOOT$.  
Finally, a set $C$ is `located' if $d(x, C)=\inf_{y\in C}d(x, y)$ exists as a continuous function.  
\begin{cor}
The theorem holds if we replace in $\CLO$ `RM-closed' by `located'.
\end{cor}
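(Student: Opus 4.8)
The final statement is a corollary asserting that Theorem~\ref{dontlabel} remains valid when the hypothesis $\CLO$ is replaced by its variant for \emph{located} sets rather than RM-closed sets. Let me think about what needs proving and sketch the plan.

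The key observation is that the proof of Theorem~\ref{dontlabel} is almost entirely independent of the precise notion of ``closed'' appearing in the consequent of $\CLO$; the only place the RM-closed structure was used was to supply a $\Pi_1^0$-formula $\varphi(x)$ to which $\Delta\text{-}\CA$ could be applied to the equivalence \eqref{equikes}. The remark immediately preceding the corollary in fact flags precisely this: \emph{any} notion of closed set works, as long as $\Delta\text{-}\CA$ still applies to \eqref{equikes}. So the located case is really a matter of verifying that locatedness provides data of the right logical complexity.

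\begin{proof}[Proof sketch]
The plan is to re-run the proof of Theorem~\ref{dontlabel} essentially verbatim, replacing the appeal to RM-closedness by the located structure. First I would fix $Y^2$ and introduce the same net $x_d : D \di \Q$ on $D = (\N^{\N}\times \N)$, sending $d=(f,0)$ to $2^{-Y(f)}$ and everything else to zero, with the lexicographic order on $\N^\N$ (using $\exists^2$) making $D$ directed; the separably closed set $\overline{S}$ is unchanged. Applying the located variant of $\CLO$ now yields a \emph{distance function} $d(\cdot, \overline{S}):\R\di\R$, continuous, with the defining property $d(x,\overline{S})=\inf_{y\in\overline{S}}d(x,y)$. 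The point is that membership $x\in\overline{S}$ is then captured by the single real equation $d(x,\overline{S})=_{\R}0$.
\end{proof}

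The main verification is that the central equivalence \eqref{equikes}, namely $2^{-n}\in\overline{S}\asa(\exists f^1)(Y(f)=n)$, still holds and still has the complexity needed for $\Delta\text{-}\CA$. The forward and reverse directions of \eqref{equikes} go through word-for-word, since they only use the definition of the net $x_d$ and the meaning of ``separably closed,'' neither of which has changed. The one thing to check is that $2^{-n}\in\overline{S}$ admits the correct presentation: with locatedness in hand, ``$2^{-n}\in\overline{S}$'' is expressed by ``$d(2^{-n},\overline{S})=_{\R}0$,'' which modulo $\exists^2$ is a $\Pi_1^0$-condition in the distance function treated as a parameter --- exactly the input $\Delta\text{-}\CA$ expects. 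The hard part, if any, is purely bookkeeping: one must confirm that a located set genuinely supplies a $\Pi_1^0$ (rather than more complex) membership predicate, so that both sides of \eqref{equikes} are $\Sigma_1^0$/$\Pi_1^0$ as required by the form of $\Delta\text{-}\CA$; since $d(\cdot,\overline{S})$ is continuous and $\exists^2$ makes equality on $\R$ decidable, this is immediate. Concluding as before, $\Delta\text{-}\CA$ applied to \eqref{equikes} yields the set witnessing $\RANGE$, and $\BOOT$ follows by Theorem~\ref{rage}.
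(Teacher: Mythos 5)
Your proposal is correct and follows essentially the same route as the paper: reuse the construction of $\overline{S}$ from Theorem \ref{dontlabel}, replace the membership predicate by the condition $d(2^{-n},\overline{S})=_{\R}0$ supplied by locatedness, and note that this is still of the right complexity (modulo $\exists^{2}$) for $\Delta\text{-}\CA$ to apply to \eqref{equikes}, after which $\RANGE$ and $\BOOT$ follow as before.
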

\begin{proof}
The set $\overline{S}$ from the proof of the theorem is now located; \eqref{equikes} then becomes:
\[
d(2^{-n}, \overline{S})=_{\R}0\asa  (\exists f^{1})(Y(f)=n),
\]
Thus, $\BOOT$ follows in the same way as in the proof of the theorem.  
\end{proof}
Finally, it goes without saying that one can obtain $\RANGE^{1}$ from $\CLO$ generalised to sub-sets of $\mathcal{N}$, while
`even larger' sets give rise to the existence of the range of functionals with `even larger' domain. 

\subsection{Rado selection lemma}\label{radokes}
We study the \emph{Rado selection lemma} from \cite{radiant}.  
The countable version of this lemma is equivalent to $\ACA_{0}$ by \cite{simpson2}*{III.7.8}, while a proof based on $\range$ can be found in \cite{hirstphd}*{\S3}.
We shall lift the reversal to higher-order arithmetic, making use of $\RANGE$.
We first need some definitions. 
\bdefi
A \emph{choice function} $f$ for a collection of non-empty $A_{i}$ indexed by $I$, is such that $f(i)\in A_{i}$ for all $i\in I$.
\edefi
A collection of finite subsets of $\N$ indexed by $\N^{\N}$ is of course given by a mapping $Y^{1\di 0^{*}}$.
In case the latter is continuous, the index set is actually countable.  

\bdefi[$\rado(\N^{\N})$]
Let $A_{i}$ be a collection of finite sets indexed by $\N^{\N}$ and let $F_{J}^{2}$ be a choice function for the collection $A_{j}$ for $j\in J$, for any finite set $J\subset \N^{\N}$.  
Then there is a choice function $F^{2}$ for the entire collection $A_{j}$ such that for all finite $J\subset \N^{\N}$, there is a finite $K\supseteq J$ such that for $j\in J$, $F(j)=_{0}F_{K}(j)$. 
\edefi
The following theorem is obtained by lifting the proof of \cite{hirstphd}*{Theorem 3.30} to higher-order arithmetic. 
\begin{thm}\label{dog}
The system $\RCAo$ proves $\rado(\N^{\N})\di \BOOT$.
\end{thm}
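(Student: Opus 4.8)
The plan is to derive $\RANGE$ from $\rado(\N^{\N})$ and then invoke Theorem \ref{rage} to obtain $\BOOT$, exactly mirroring the structure of the previous lifted proofs. The overall strategy is to encode a given $Y^{2}$ into a collection of finite sets and a family of partial choice functions in such a way that the global choice function guaranteed by the Rado selection lemma lets us decide, for each $n$, whether $(\exists f^{1})(Y(f)=n)$ holds. First I would fix $Y^{2}$ and, following the countable template in \cite{hirstphd}*{\S3}, attach to each index a finite set whose possible choices record whether $n$ lies in the range of $Y$ up to some finite stage. The natural set-up is to index the collection by $\N$ (viewed as a trivial subset of $\N^{\N}$) and let $A_{n}$ be a two-element set, say $\{0,1\}$, where the value $1$ is intended to mean ``$n$ is in the range of $Y$''; the partial choice functions $F_{J}$ on finite $J\subset \N^{\N}$ are then defined using $\exists^{2}$ to search a bounded region and commit to the correct value whenever a witness has already been found.

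The key steps, in order, would be: (1) define the finite sets $A_{i}$ and verify they are genuinely given by a mapping $Y^{1\di 0^{*}}$ as required by $\rado(\N^{\N})$, taking care that the collection is indexed by all of $\N^{\N}$ even though only countably many indices carry the ``decision'' information; (2) for each finite $J\subset \N^{\N}$, define the partial choice function $F_{J}$ so that it is \emph{consistent}, i.e.\ it selects $1$ at index $n$ precisely when the finite data in $J$ already exhibits some $f$ with $Y(f)=n$, and verify that $F_{J}$ is a legitimate choice function (each value lies in the corresponding $A_{i}$); (3) apply $\rado(\N^{\N})$ to extract a global choice function $F^{2}$ together with the coherence property that every finite $J$ extends to some finite $K$ on which $F$ agrees with $F_{K}$; (4) use this coherence, together with $\exists^{2}$ to decide equalities on $\N^{\N}$, to prove the equivalence
\be\label{radoeq}
(\exists f^{1})(Y(f)=n) \asa F(\langle n\rangle)=_{0}1
\ee
(modulo the obvious coding of the index $n$ as an element of $\N^{\N}$); and (5) read off the set $X$ witnessing $\RANGE$ from the right-hand side of \eqref{radoeq}, which is now a decidable predicate in the parameter $F$. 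Applying Theorem \ref{rage} then delivers $\BOOT$.

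The main obstacle I anticipate is step (4): establishing that the coherence clause of $\rado(\N^{\N})$ actually forces $F$ to record the correct answer. The subtlety is that the global $F$ is only required to agree with \emph{some} extending $F_{K}$, not with every partial choice function, so I must arrange the definition of the $F_{J}$'s in step (2) to be \emph{monotone} in the sense that once a witness for $n$ appears in $J$, every consistent choice function on any $K\supseteq J$ is pinned to the value $1$ at $n$; conversely, if no witness exists anywhere, the value must stay $0$. Getting this monotonicity to interact correctly with the ``there exists a finite $K\supseteq J$'' quantifier in the definition of $\rado(\N^{\N})$ is precisely the countable-to-uncountable lifting at work, and is where the proof of \cite{hirstphd}*{Theorem 3.30} must be transcribed with functions replaced by functionals and $\Delta_{1}^{0}$-style reasoning replaced by appeals to $\exists^{2}$. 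Once this coherence is in hand, the remaining steps are routine coding manipulations of the kind already carried out in Theorems \ref{proofofconcept} and \ref{dontlabel}.
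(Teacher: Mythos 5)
Your proposal is, in substance, the paper's own proof: the same reduction to $\RANGE$ followed by Theorem \ref{rage}, the same constant collection of two-element sets $\{0,1\}$, the same ``a witness is already listed in $J$'' definition of the partial choice functions, and the same monotonicity-plus-coherence argument pinning down the global choice function at the index coding $n$; your equivalence is exactly the paper's chain of implications \eqref{teasy}, with $\langle n\rangle$ in place of the constant sequence $\widetilde{n}$.

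The one genuine issue is the base theory. The theorem claims provability in $\RCAo$, but you invoke $(\exists^{2})$ twice, and as written your argument therefore only establishes the implication over $\ACAo$, which in the reverse-mathematical setting is a strictly weaker statement (unlike the sections on nets, closed sets and fields, this section of the paper does \emph{not} assume $(\exists^{2})$, and the paper stresses that its proof uses neither countable choice nor $\Delta\text{-}\CA$). Fortunately both invocations are gratuitous. The search defining $F_{J}$ is bounded: it runs over the finitely many elements of $J$ and tests the \emph{number} equalities $Y(f)=n$, which are quantifier-free in the parameter $Y$, so $F_{J}$ is given by a term of $\RCAo$. And in the coherence argument you never need to \emph{decide} equality on $\N^{\N}$: the clause $K\supseteq J$ hands you that the witness $f\in J$ occurs (up to $=_{1}$) in $K$, and the extensionality axiom ($f=_{1}g\di Y(f)=Y(g)$) then forces $F_{K}$ to take the value $1$ at the relevant index; the converse direction needs no equality considerations at all, since an empty range makes every $F_{K}$ vanish there. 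Deleting the appeals to $(\exists^{2})$ thus repairs the base-theory mismatch and turns your outline into the paper's proof verbatim.
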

\begin{proof}
We will prove $\RANGE$, i.e.\ fix some $G^{2}$.
For any $w^{1^{*}}$, define $A_{w}:=\{0,1\}$ and the associated choice function $F_{w}^{2}(h^{1}):=1$ if $ (\exists g\in w)(G(g)=h(0))$, and zero otherwise.
For $F^{2}$ as in $\rado(\N^{\N})$, we have the following implications for any $n\in \N$ and where $\widetilde{n}:=\langle n\rangle* \langle n\rangle *\dots$ is a sequence:
\begin{align}\label{teasy}
(\exists g^{1})(G(g)=n)
&\di (\exists w_{0}^{1^{*}})(F_{w_{0}}(\widetilde{n})=1)\notag\\
&\di F(\widetilde{n})=1\notag\\
&\di (\exists g^{1})(G(g)=n).
\end{align}
The first implication in \eqref{teasy} follows by definition, while the others follow by the properties of $F^{2}$.
Hence, $\RANGE$ follows, yielding $\BOOT$ by \cite{samph}*{Theorem 3.19}.
\end{proof}
%
The previous proof does not make use of countable choice or $\Delta\textsf{-CA}$.  
Thus, for larger collections indexed by subsets of type $n$ objects, one readily obtains e.g.\ \ref{myhunt1} as in Corollary \ref{proofofconcepts}, but
without using extra choice or comprehension.   Finally, a reversal in Theorem \ref{dog} seems to need $\BOOT$ plus choice.

\smallskip

Hirst introduces a version of the Rado selection lemma in \cite{hirstphd}*{\S3} involving a bounding function, resulting in a reversal to $\WKL_{0}$.  
A similar bounding function could be introduced, restricting $\N^{\N}$ to some compact sub-space while obtaining (only) the Heine-Borel theorem for uncountable covers as in $\HBU$.  

\subsection{Fields and order}\label{forder}
We lift the following implication to higher-order arithmetic: \emph{that any countable formally real field is orderable implies weak K\"onig's lemma} (see \cite{simpson2}*{IV.4.5}).
This result is based on a recursive counterexample by Ershov from \cite{erjov}, as (cheerfully) acknowledged in \cite{fried4}*{p.\ 145}.

\smallskip

First of all, the aforementioned implication is obtained via an intermediate principle about the separation of disjoint ranges of injective functions (\cite{simpson2}*{IV.4.4}).  The generalisation to higher-order arithmetic and type 2 functionals is:
\be\tag{$\SEP^{1}$}
(\forall Y^{2}, Z^{2})\big[(\forall f^{1}, g^{1})(Y(f)\ne Z(g))\di (\exists X^{1})(\forall g^{1})(Y(g)\in X \wedge Z(g)\not\in X ) \big].
\ee
Modulo $\QFAC^{0,1}$, $\SEP^{1}$ is equivalent to the Heine-Borel theorem for uncountable covers as in $\HBU$.
We also need the following standard algebra definitions.
\bdefi[Field over Baire space]\label{polikoooo}
A field $K$ over Baire space consists of a set $|K|\subseteq \N^{\N}$ with distinguished elements $0_{K}$ and $1_{K}$, an equivalence relation $=_{K}$, and operations $+_{K}$, $-_{K}$ and $\times_{K}$ on $|K|$ satisfying the usual field axioms.  
\edefi
\bdefi
A field $K$ is \emph{formally real} if there is no sequence $c_{0}, \dots, c_{n}\in |K|$ such that $0=_{K}\sum_{i=0}^{n}c_{i}^{2}$.
\edefi
\bdefi
A field $K$ over $\N^{\N}$ is \emph{orderable} if there exists an binary relation `$<_{K}$' on $|K|$ satisfying the usual axioms of ordered field.
\edefi
As in \cite{simpson2}, we sometimes identify $K$ and $|K|$.
With these definitions, the following theorem is a generalisation of \cite{simpson2}*{IV.4.5.2}.
\bdefi[$\ORD$]
A formally real field over $\N^{\N}$ is orderable.
\edefi
Given the use of binary relations in $\ORD$, we shall only study the latter in the presence of $(\exists^{2})$ to guarantee that `$\leq_{\R}$' is also a binary relation as in Definition \ref{strijker}.
We now have the following theorem where the proof is obtained by lifting the proof of \cite{simpson2}*{IV.4.5} to higher-order arithmetic. 
\begin{thm}\label{hagio}
The system $\ACAo+\Delta\text{-}\CA$ proves that $\ORD\di \SEP^{1}$.
\end{thm}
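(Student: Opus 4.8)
The plan is to mimic the structure of the previous lifting proofs in this section, deriving the separation principle $\SEP^{1}$ from $\ORD$ by following the classical proof of \cite{simpson2}*{IV.4.5} with sequences replaced by type $1$ objects and functions by type $2$ functionals. First I would fix two functionals $Y^{2}, Z^{2}$ with disjoint ranges, i.e.\ $(\forall f^{1}, g^{1})(Y(f)\ne Z(g))$, and build a formally real field $K$ over $\N^{\N}$ whose orderability encodes the separation. Following the classical construction, I would work over a countable base field (say $\Q$ or $\Q(t_0,t_1,\dots)$ with transcendentals) and adjoin, for each natural number $n$, an element forced to be a square when $n\in\range(Y)$ and forced to be a non-square (a negative) when $n\in\range(Z)$. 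In an ordered field a square is always $\geq 0$, so the sign of such an element in any ordering will record which range $n$ belongs to; the disjointness of the ranges is exactly what is needed to keep the field \emph{formally real}, so that $\ORD$ applies and furnishes an ordering $<_{K}$.

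The key steps, in order, are as follows. First, carefully set up the field $K$ over $\N^{\N}$ as in Definition \ref{polikoooo}, with an $\N^{\N}$-indexed supply of generators; the extensionality/decidability issues here are handled by $(\exists^{2})$, which we assume throughout via $\ACAo$, so that $=_{K}$ and the field operations are genuine objects in the sense of Definition \ref{strijker}. Second, verify that $K$ is formally real: a vanishing sum of squares would force a relation incompatible with the disjointness $(\forall f,g)(Y(f)\ne Z(g))$, so this reduces to the classical argument that the construction introduces no nontrivial sum-of-squares relation. Third, apply $\ORD$ to obtain an ordering $<_{K}$. Fourth, and this is the crucial reduction, establish an equivalence of the shape
\be\label{ordequi}
(\exists f^{1})(Y(f)=n)\asa \Phi(n)
\ee
where $\Phi(n)$ is the (positive-existential, hence $\Sigma$-type) condition extracted from the ordering — namely that the generator attached to $n$ is positive — and dually the condition that it is negative is equivalent to $(\exists g^{1})(Z(g)=n)$. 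Since the two ranges are disjoint and every element of the ordered field is comparable to $0_{K}$, each $n$ falls into exactly one of these two cases, so \eqref{ordequi} has both a $\Sigma$-form on one side and a (co-)$\Sigma$-form on the other. This is precisely the input required by $\Delta\text{-}\CA$, which then produces the separating set $X$ witnessing $\SEP^{1}$.

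The main obstacle I expect is the third/fourth step: arranging the coding so that positivity in the ordering corresponds \emph{exactly} to membership in $\range(Y)$, and confirming that the resulting equivalence genuinely has the $\Delta$-form that $\Delta\text{-}\CA$ consumes. In the second-order proof of \cite{simpson2}*{IV.4.5} the passage from separation of ranges to weak K\"onig's lemma goes through $\Sigma_1^0$-separation, and the delicate part is that the ordering only \emph{exists} (with no modulus), so one cannot directly compute the sign; instead one must show that the two sign conditions are complementary, which is what lets $\Delta\text{-}\CA$ rather than outright comprehension finish the job. Concretely, I would need to check that the ``negative generator'' side, coming from $Z$, supplies exactly the $(\forall g^{1})(Z(g,n)\ne\cdots)$-style universal condition dual to the existential $Y$-condition, so that the antecedent of $\Delta\text{-}\CA$ is met. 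Everything else — the field axioms, formal reality, and the final extraction of $X$ — follows the classical template \emph{verbatim} modulo the replacement of functions by functionals, exactly in the spirit of the preceding theorems, and I would close by noting (as in Footnote-style remarks elsewhere) that the injectivity restriction present in the second-order version is unnecessary here.
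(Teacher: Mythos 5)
Your overall strategy (build a formally real field over $\N^{\N}$ whose generators' signs under an ordering encode the separation, then harvest the separating set via $\Delta\text{-}\CA$) matches the paper, but your crucial fourth step rests on a false equivalence. You claim $(\exists f^{1})(Y(f)=n)\asa\Phi(n)$, where $\Phi(n)$ says the generator attached to $n$ is positive, with the dual claim that negativity is equivalent to $(\exists g^{1})(Z(g)=n)$, justified by the assertion that ``each $n$ falls into exactly one of these two cases.'' That assertion is wrong: $n$ may lie in neither range, yet the generator attached to such an $n$ still receives some sign from the ordering, so positivity does not imply that $n$ is a $Y$-value. Consequently the antecedent you intend to feed to $\Delta\text{-}\CA$ --- effectively $(\exists f^{1})(Y(f)=n)\asa(\forall g^{1})(Z(g)\ne n)$ --- fails for every $n$ outside both ranges. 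Keep in mind that $\SEP^{1}$ only asks for a set \emph{containing} the range of $Y$ and \emph{disjoint from} the range of $Z$; it need not equal the range of $Y$, and no argument from $\ORD$ alone can deliver an equivalence of your form, since the ordering is free to assign arbitrary signs to uncommitted generators.

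The paper's proof sidesteps this at exactly the point where you stumble. It adjoins $\sqrt{p_{k}}$ for \emph{all} $k$ (not only for indices in the two ranges), alongside $\sqrt[4]{p_{Y(w(i))}}$ and $\sqrt{-\sqrt{p_{Z(w(j))}}}$, so that every $\sqrt{p_{k}}$ has representatives in the field $K$. The $\Delta$-pair fed to $\Delta\text{-}\CA$ is then the equivalence between the $\Sigma$-form ``some representative $u$ of $\sqrt{p_{k}}$ satisfies $u>_{K}0_{K}$'' and the $\Pi$-form ``every representative of $\sqrt{p_{k}}$ satisfies $v>_{K}0_{K}$'' (equation \eqref{foos} in the paper): both sides speak about the sign of the \emph{same} element, and they are equivalent because representatives always exist and $<_{K}$ respects $=_{K}$. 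Range membership enters only through one-directional implications --- if $n$ is a $Y$-value then $\sqrt{p_{n}}$ is a square in $K$, hence positive; if $n$ is a $Z$-value then $-\sqrt{p_{n}}$ is a square, hence $\sqrt{p_{n}}$ is negative --- which is all the separation requires. A second, more technical point you gloss over with ``carefully set up the field $K$ over $\N^{\N}$'': the union $\bigcup_{f\in\N^{\N}}K_{\langle f\rangle}$ cannot be formed directly as a set in the base theory, and the paper spends a substantial part of the proof constructing a functional $G^{2}$ (via the mechanism of Theorem \ref{rage}) that enumerates the elements of the finitely generated subfields by finite sequences, and defining $+_{K}$, $\times_{K}$, $=_{K}$ by pulling back along $G$; without some such device your field is not an object of the formal system at all.
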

\begin{proof}
Let $p_{k}$ be an enumeration of the primes and fix some $Y^{2}, Z^{2}$ as in the antecedent of $\SEP^{1}$.
By \cite{simpson2}*{II.9.7}, the algebraic closure of $\Q$, denoted $\overline{\Q}$, is available in $\RCA_{0}$.  For $w^{1^{*}}$, define $K_{w}$ as the sub-field of $\overline{\Q}(\sqrt{-1})$ generated by the following:
\[\textstyle
\{\sqrt[4]{p_{Y(w(i))}}:i<|w|\}\cup \{\sqrt{-\sqrt{  p_{Z(w(j))}}}:j<|w|\}\cup \{ \sqrt{p_{k}}:k<|w| \}.
\]
Note that one can define such a sub-field from a \emph{finite} set of generators in $\RCA_{0}$ (see \cite{simpson2}*{IV.4}). 
Unfortunately, this is not possible for \emph{infinite} sets and we need a different approach, as follows.
By the proof of Theorem \ref{rage} (and \eqref{taff} in particular), there is $G^{2}$ with: 
\be\label{conq2}
\big(\forall b\in \overline{\Q}(\sqrt{-1})\big)\big[ (\exists w^{1^{*}})(b\in K_{w})\asa (\exists v^{1^{*}})(G(v)=b)  \big].
\ee
Intuitively, we now want to define the field $\cup_{f\in \N^{\N}}K_{\langle f\rangle}$, but the latter cannot be (directly) defined as a set in weak systems. 
We therefore take the following approach: we define a field $K$ over Baire space using $G$ from \eqref{conq2}, as follows:  for $w^{1^{*}}, v^{1^{*}}$, define $w+_{K}v$ as that $u^{1^{*}}$ such that $G(u)=G(v)+_{\overline{\Q}(\sqrt{-1})}~G(w)$.
This $u^{1^{*}}$ is found by removing from $v*w$ all elements from $G(v)$ and $G(w)$ that sum to $0$ in $G(v)+_{\overline{\Q}(\sqrt{-1})}G(w)$.
Multiplication $\times_{K}$ is defined similarly, while $-_{K}w^{1^{*}}$ provides an extra label such that $G(-_{K}w)=-b$ if $G(w)=b$ and the `inverse function' of $\times_{K}$ is defined similarly.  
Using \eqref{conq2} for $w=\langle\rangle$, $0_{K}$ and $1_{K}$ are given by those finite sequences $v_{0}$ and $v_{1}$ such that $G(v_{0})=0_{\overline{\Q}(\sqrt{-1})}$ and $G(v_{1})=1_{\overline{\Q}(\sqrt{-1})}$. 
Finally, $v^{1^{*}}=_{K} w^{1^{*}}$ is defined as the decidable equality $G(w)=_{\overline{\Q}(\sqrt{-1})}G(v)$.

\smallskip

We call the resulting field $K$ and proceed to show that it is formally real.  To this end, note that $K_{w}$ can be embedded into $\overline{\Q}$ by mapping $\sqrt{p_{Y(w(i))}}$ to $\sqrt{p_{Y(w(i))}}$ and $\sqrt{p_{k}}$ to $-\sqrt{p_{k}}$ for $k\ne Y(w(i))$ for $i<|w|$.  Hence, $K_{w}$ is formally real for every $w^{1^{*}}$.
As a result, $K$ is also formally real because a counterexample to this property would live in $K_{v}$ for some $v^{1^{*}}$.  Applying $\ORD$, $K$ now has an order $<_{K}$.
Since $\sqrt{p_{Y(f)}}$ has a square root in $K_{\langle f\rangle}$, namely $\sqrt[4]{p_{Y(f)}}$, we have $u^{1^{*}}>_{K}0_{K}$ if $G(u)=\sqrt{p_{Y(f)}}$, using the basic properties of the ordered field $K$.  
One similarly obtains $v^{1^{*}}<_{K}0_{K}$ if $G(v)=\sqrt{p_{Z(g)}}$.  Intuitively speaking, the order $<_{K}$ thus allows us to separate the ranges of $Y$ and $Z$.     
To this end, consider the following equivalence, for every $k^{0}$:
\be\label{foos}
(\exists u^{1^{*}})(u>_{K} 0_{K}\wedge  G(u)=\sqrt{p_{k}})\asa (\forall v^{1^{*}})\big(  G(v)=\sqrt{p_{k}}\di  v>_{K}0_{K}\big).
\ee
The forward direction in \eqref{foos} is immediate in light of the properties of $=_{K}$ and $<_{K}$.  For the reverse direction in \eqref{foos}, fix $k_{0}$ and find $w_{0}^{1^{*}}$ such that $|w_{0}|>k_{0}$.  
Since $\sqrt{p_{k_{0}}}\in K_{w_{0}}$, \eqref{conq2} yields $v_{0}^{1^{*}}$ such that $G(v_{0})=\sqrt{p_{k_{0}}}$.  The right-hand side of \eqref{foos} implies $v_{0}>_{K}0_{K}$, and the left-hand side of \eqref{foos} follows. 

\smallskip

Finally, apply $\Delta$-comprehension to \eqref{foos} and note that the resulting set $X$ satisfies $(\forall f^{1})(Y(f)\in X\wedge Z(f)\not\in X)$.
Indeed, fix $f_{1}^{1}$ and put $k_{1}:=Y(f_{1})$.  Clearly, $\sqrt{p_{k_{1}}}\in K_{\langle f_{1}\rangle}$, yielding $v_{1}$ such that $G(v_{1})=\sqrt{p_{k_{1}}}$ by \eqref{conq2}.
As noted above, the latter number has a square root, implying $v_{1}>_{K}0$, and $Y(f_{1})=k_{1}\in X$ by definition.
Similarly, $k_{2}:=Z(f_{1})$ satisfies $\sqrt{p_{k_{2}}}\in K_{\langle f_{1}\rangle}$ and \eqref{conq2} yields $v_{2}$ such that $G(v_{2})=\sqrt{p_{k_{2}}}$.  Since $-\sqrt{p_{k_{2}}}$ has a square root in $K$, $v_{2}<_{K}0_{K}$ follows, 
and $k_{2}\not \in X$, again by the definition of $X$. 
\end{proof}
It should be noted that the field $K$ from the previous proof is actually countable (in the usual set theoretic sense).  Indeed, $K$ is can be viewed as a sub-field of $\overline{\Q}(\sqrt{-1})$.  
Moreover, there is an obvious mapping $H$ of $K$ into $\N$ such that $[H(w)=_{\overline{\Q}(\sqrt{-1})}H(v)]\di v=_{K}w$, i.e.\ an injection relative to the field equality $=_{K}$.
With this (standard) definition of `countable', we may restrict $\ORD$ to such fields and still obtain $\SEP^{1}$.

\smallskip

Next, the following theorem yield further motivation for $\Delta\text{-}\CA$.
\begin{thm}\label{motich}
The system $\RCAo$ proves $\SEP^{1}\di \Delta\text{-}\CA$.
\end{thm}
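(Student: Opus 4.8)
The plan is to reduce an arbitrary instance of $\Delta\text{-}\CA$ to a single application of $\SEP^{1}$. So I would assume $\SEP^{1}$ and fix $Y^{2},Z^{2}$ satisfying the antecedent of $\Delta\text{-}\CA$, i.e.\ $(\forall n^{0})\big[(\exists f^{1})(Y(f,n)=0)\asa(\forall g^{1})(Z(g,n)=0)\big]$. Write $S$ for the (not-yet-available) set $\{n:(\exists f^{1})(Y(f,n)=0)\}$; note that by the antecedent its complement is exactly $\{n:(\exists g^{1})(Z(g,n)\ne 0)\}$, since negating the equivalence gives $\neg(\exists f)(Y(f,n)=0)\asa(\exists g)(Z(g,n)\ne 0)$. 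The idea is to manufacture two functionals whose ranges are disjoint and respectively ``mark'' the points of $S$ and of its complement, so that the separating set returned by $\SEP^{1}$ decides membership in $S$.

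Concretely, reading $f^{1}$ as coding the pair $(n,h)$ with $n=f(0)$ and $h=\lambda k.f(k+1)$, I would define $\hat Y(f):=2n+2$ if $Y(h,n)=0$ and $\hat Y(f):=0$ otherwise, and $\hat Z(f):=2n+2$ if $Z(h,n)\ne 0$ and $\hat Z(f):=1$ otherwise. These are genuine $\lambda$-terms built from $Y,Z$, so they exist in $\RCAo$ with no extra comprehension, and the test $Y(h,n)=0$ is decidable because $Y$ returns a number. By construction $\range(\hat Y)=\{0\}\cup\{2n+2:(\exists h)(Y(h,n)=0)\}$ and $\range(\hat Z)=\{1\}\cup\{2n+2:(\exists h)(Z(h,n)\ne 0)\}$.

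The main obstacle is that a total functional $\N^{\N}\di\N$ always has nonempty range, so one cannot simply arrange $\range(\hat Y)$ to equal the possibly-empty set $S$: the ``garbage'' output produced when no witness is read off the argument must land somewhere. This is precisely what the shift to $2n+2$ together with the reserved default values $0$ (for $\hat Y$) and $1$ (for $\hat Z$) handles, since $0$ and $1$ are distinct from each other and from every value $2n+2\ge 2$. The only remaining possible collision is a common value $2n+2$, which would force both $(\exists h)(Y(h,n)=0)$ and $(\exists h)(Z(h,n)\ne 0)$; but this is ruled out by the antecedent of $\Delta\text{-}\CA$, so $(\forall f^{1},g^{1})(\hat Y(f)\ne\hat Z(g))$ holds and the hypothesis of $\SEP^{1}$ is met. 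I expect this disjointness bookkeeping to be the only delicate point.

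Finally I would apply $\SEP^{1}$ to obtain $X_{0}^{1}$ with $\range(\hat Y)\subseteq X_{0}$ and $\range(\hat Z)\cap X_{0}=\emptyset$, and set $X:=\{n^{0}:2n+2\in X_{0}\}$, which exists in $\RCAo$ by $\Delta^{0}_{1}$-comprehension relative to $X_{0}$. If $n\in S$ then $2n+2\in\range(\hat Y)\subseteq X_{0}$, so $n\in X$; if $n\notin S$ then the antecedent yields $(\exists g)(Z(g,n)\ne 0)$, whence $2n+2\in\range(\hat Z)$ and therefore $2n+2\notin X_{0}$, so $n\notin X$. Hence $X=\{n:(\exists f^{1})(Y(f,n)=0)\}$ is exactly the set required by $\Delta\text{-}\CA$, completing the proof.
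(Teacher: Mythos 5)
Your proof is correct, and the underlying mechanism is the one the paper uses: turn the two sides of the $\Delta\text{-}\CA$ equivalence into ranges of type-two functionals, shift the indices away from the default (``garbage'') outputs, get disjointness of the ranges from the antecedent of $\Delta\text{-}\CA$, apply $\SEP^{1}$, and decode the separating set. The decomposition is genuinely different, though, and yours is tighter on one point. The paper first treats a special case: for pairs $G,H$ satisfying the exact-complementation condition \eqref{hord} (the range of $G$ is \emph{precisely} the complement of the range of $H$), the set produced by $\SEP^{1}$ \emph{is} the range of $G$; it then reduces general $\Delta\text{-}\CA$ to this special case via $G(\langle n\rangle *g)=n+1$ if $Y(g,n)=0$ and $0$ otherwise, with the mirrored $H$ built from $Z$ left implicit. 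You bypass the special case and use only what $\SEP^{1}$ literally demands, namely disjointness of the two ranges---strictly weaker than \eqref{hord}, since the union of $\range(\hat Y)$ and $\range(\hat Z)$ need not exhaust $\N$. This buys you something concrete: in the paper's coding both functionals dump their garbage at the value $0$, so the constructed pair can fail \eqref{hord} (and even disjointness) at $k=0$; repairing this requires exactly your bookkeeping---two distinct reserved defaults $0$ and $1$ plus marks at $2n+2\geq 2$---so the step you flag as ``the only delicate point'' is precisely the point the paper's own sketch glosses over. Two small remarks: in $\RCAo$ your final set is simply the term $\lambda n.X_{0}(2n+2)$, so no appeal to comprehension is needed; and your displayed identities for $\range(\hat Y)$ and $\range(\hat Z)$ should really be inclusions ($0$, resp.\ $1$, is attained only if some garbage clause actually fires), but only the unproblematic directions are ever used, so this is harmless.
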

\begin{proof}
To establish this implication, let $G^{2}, H^{2}$ be such that
\be\label{hord}
(\forall k^{0})\big[(\exists f^{1})(G(f)=k)\asa (\forall g^{1})(H(g)\ne k)\big].
\ee
By definition, $G, H$ satisfy the antecedent of $\SEP^{1}$.  Let $X$ be the set obtained by applying the latter and consider:
\[
(\exists f^{1})(G(f)=k)\di k\in X\di (\forall g^{1})(H(g)\ne k)\di (\exists f^{1})(G(f)=k), 
\]
where the final implication follows from \eqref{hord}; for the special case \eqref{hord}, $X$ is now the set required by $\Delta$-$\CA$.  
For $Y^{(1\times 0)\di 0}$, define $G^{2}$ as follows for $n^{0}$ and $g^{1}$: put $G(\langle n\rangle *g)=n+1$ if $Y(g, n)=0$, and $0$ otherwise.
Note that for $k\geq 0$, we have
\[
(\exists f^{1})(Y(f,k)=0)\asa (\exists g^{1})(G(g)=k+1).
\]
Hence, \eqref{hord} is `general enough' to obtain full $\Delta$-$\CA$, and we are done.
\end{proof}
Finally, it goes without saying that one can obtain the higher-order counterpart of $\SEP^{1}$ from $\ORD$ generalised to sub-sets of $\mathcal{N}$, while
`even larger' sets give rise to the existence of the range of functionals with `even larger' domain. 



\subsection{Algebraic closure of fields}\label{aclo}
We lift two well-known RM-results pertaining to algebraic closures to higher-order arithmetic. 
Section \ref{ALA1} treats a theorem at the level of $\ACA_{0}$, while Section \ref{ALA2} treats a theorem at the level of $\WKL_{0}$.
Additional results can be found in Section \ref{moar}.
\subsubsection{At the level of arithmetical comprehension}\label{ALA1}
We lift the following implication to higher-order arithmetic: \emph{$\range$ follows from the statement that any countable field is isomorphic to a sub-field of an algebraically closed countable field} (see \cite{simpson2}*{III.3}).
This implication goes back to a recursive counterexample from \cite{vrolijk}.

\smallskip

Recall the definition of fields over $\N^{\N}$ from Definition \ref{polikoooo}.
Polynomial rings and algebraic closures are now  defined as in \cite{simpson2}*{II.9} using the conventional definitions.
Thus, the following definition makes sense in $\RCAo$.
\bdefi[$\ALCL$]
Every field over Baire space is isomorphic to a sub-field of an algebraically closed field over Baire space. 
\edefi
As above, we study $\ALCL$ in the presence of $(\exists^{2})$ to guarantee that there are `enough' binary relations. 
%
%
\begin{thm}\label{firstcome}
The system $\ACAo+\Delta\text{-}\CA$ proves $\ALCL\di \BOOT$.
\end{thm}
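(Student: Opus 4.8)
The plan is to mirror the proof of Theorem~\ref{hagio} as closely as possible, with the algebraically closed overfield provided by $\ALCL$ playing the role that the order $<_{K}$ (provided by $\ORD$) played there. By Theorem~\ref{rage} it suffices to establish $\RANGE$, so I would fix $Y^{2}$ and let $p_{k}$ enumerate the primes. Working inside $\overline{\Q}$, the algebraic closure of $\Q$ available in $\RCAo$ by \cite{simpson2}*{II.9.7}, I would for each $w^{1^{*}}$ let $K_{w}$ be the sub-field of $\overline{\Q}$ generated by $\{\sqrt{p_{k}}:k<|w|\}\cup\{\sqrt[4]{p_{Y(w(i))}}:i<|w|\}$; these are finitely generated, hence definable in $\RCAo$ as in \cite{simpson2}*{IV.4}. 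As in \eqref{conq2}, the proof of Theorem~\ref{rage} then supplies $G^{2}$ with $(\forall b\in\overline{\Q})\big[(\exists w^{1^{*}})(b\in K_{w})\asa(\exists v^{1^{*}})(G(v)=b)\big]$, and I would use $G$ to define a field $K$ over Baire space exactly as in the proof of Theorem~\ref{hagio} (with $=_{K}$ the decidable $G$-equality, which needs $(\exists^{2})$). By construction $\sqrt{p_{n}}\in K$ for \emph{every} $n$, while $\sqrt{p_{n}}$ is a \emph{square} in $K$ precisely when $\sqrt[4]{p_{n}}\in K$, i.e.\ precisely when $(\exists f^{1})(Y(f)=n)$; that the $K_{w}$ (hence $K$) really fail to make $\sqrt{p_{n}}$ a square when $n$ is not in the range of $Y$ rests on a multiquadratic-independence argument of the same flavour as the formal-reality check carried out in Theorem~\ref{hagio}.

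With this encoding in place, the $\Sigma$-side of the equivalence needed for $\Delta\text{-}\CA$ is elementary and internal to $K$, namely
\[
(\exists f^{1})(Y(f)=n)\asa(\exists v^{1^{*}})\big(G(v)^{2}=_{\overline{\Q}}\sqrt{p_{n}}\big),
\]
a single function quantifier over a matrix that is decidable given $(\exists^{2})$. The entire content of $\ALCL$ then goes into producing the matching $\Pi$-side: I would apply $\ALCL$ to $K$, obtaining an algebraically closed field $L$ over Baire space and an isomorphism $\iota$ of $K$ onto a sub-field of $L$, and use the canonical splitting of $x^{2}-\iota(\sqrt{p_{n}})$ in $L$ together with a degree/conjugacy analysis over $\iota(K)$ to express `$\sqrt{p_{n}}$ is a square in $K$' in universally quantified form. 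Granting such a $\Pi$-side, $\Delta\text{-}\CA$ applied to the resulting equivalence yields a set $X$ with $n\in X\asa(\exists f^{1})(Y(f)=n)$, which is exactly $\RANGE$; then $\BOOT$ follows by Theorem~\ref{rage}.

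The hard part will be precisely this construction of the $\Pi$-side. In Theorem~\ref{hagio} the analogue was free: $\ORD$ delivers an outright \emph{decidable} relation $<_{K}$ respecting $=_{K}$, so `$\sqrt{p_{k}}>_{K}0_{K}$' could be read off either existentially or universally over the $G$-representatives, exactly as in \eqref{foos}; similarly, in \eqref{equikes} the hypothesis $\CLO$ handed over a genuine $\Pi_{1}^{0}$ description. By contrast $\ALCL$ supplies only an algebraically closed \emph{overfield}, and `$\sqrt{p_{n}}$ is a square in $K$' is prima facie merely $\Sigma$: there is no decidable predicate on $L$ separating the two cases, since \emph{every} element of $L$ is a square. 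The delicate step is therefore to leverage the algebraic closedness of $L$ (the canonical two roots $\pm\eta_{n}$ of $x^{2}-\iota(\sqrt{p_{n}})$ and the dichotomy between $\eta_{n}$ having degree $1$ or $2$ over $\iota(K)$) to render non-squareness in $K$ witnessable, and hence to obtain the required $\Pi$-characterisation. This is exactly where $\Delta\text{-}\CA$, assumed in the hypothesis, earns its keep, precisely as it did in Theorems~\ref{proofofconcept} and~\ref{dontlabel}.
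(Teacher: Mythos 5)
Your proposal is correct and follows the same overall strategy as the paper's proof: reduce $\BOOT$ to $\RANGE$ via Theorem \ref{rage}, build a field $K$ over Baire space from the finitely generated subfields $K_{w}$ of $\overline{\Q}$ via a functional $G$ as in \eqref{conq2}, apply $\ALCL$, extract a $\Sigma$/$\Pi$ equivalence using the algebraically closed overfield, and finish with $\Delta\text{-}\CA$. Where you differ is the encoding, and that difference is exactly what creates the complications you worry about. The paper takes $K_{w}=\Q(\{\sqrt{p_{Y(w(i))}}:i<|w|\})$, so range membership is tagged by $\sqrt{p_{n}}\in K$ itself, and the key equivalence \eqref{simsma} is
\[
(\exists f^{1})(Y(f)=n)\asa (\forall b^{1})\big((b\in M\wedge b^{2}=p_{n})\di b\in L\big),
\]
where $h:K\di L\subset M$ is given by $\ALCL$ (your $\iota(K)$ is the paper's $L$, your $L$ is the paper's $M$). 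Since $p_{n}$ is rational, it is canonically an element of $M$, so no representative of $\iota(\sqrt{p_{n}})$ need be chosen uniformly in $n$, and the $\Pi$-side you call ``the hard part'' is immediate: if $n$ is in the range of $Y$, then both roots of $x^{2}-p_{n}$ in $M$ are $\pm h(\sqrt{p_{n}})\in L$; if not, algebraic closedness of $M$ supplies a root, which cannot lie in $L\cong K$ --- precisely the witnessing mechanism you describe for your roots $\pm\eta_{n}$, so your hedging there is unnecessary. Your fourth-root layer (all $\sqrt{p_{k}}$ placed in $K$, with $\sqrt[4]{p_{Y(w(i))}}$ marking the range) is inherited from Theorem \ref{hagio}, where it is genuinely needed because an \emph{order} must distinguish the two square roots; here it buys nothing and costs you a harder independence fact (non-membership of $\sqrt[4]{p_{n}}$ in a field containing \emph{all} $\sqrt{p_{k}}$, a Kummer-theoretic claim) in place of the standard fact that square roots of distinct primes are independent over $\Q$. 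So your argument does go through, but the paper's simpler tagging makes both the algebraic lemma and the $\Pi$-characterisation essentially free.
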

\begin{proof}
The proof is similar to that of Theorem \ref{hagio}, and we will not go into as much detail.  
%
Recall that $\overline{\Q}$, the algebraic closure of $\Q$, is available in $\RCA_{0}$ by \cite{simpson2}*{II.9.7} as a countable field.
Let $p_{n}$ be an increasing enumeration of the primes, i.e.\ $p_{0}=2$, $p_{1}=3$, et cetera.
Fix $Y^{2}$ and for $w^{1^{*}}$ define $K_{w}$ as the sub-field $\Q(\{\sqrt{ p_{Y(w(i))} }: i<|w|  \})$ of $\overline{\Q}$.
Similar to \eqref{conq2}, there is $G^{2}$ satisfying the following:
\be\label{conq}
(\forall b\in \overline{\Q})\big[ (\exists w^{1^{*}})(b\in K_{w})\asa (\exists v^{1^{*}})(G(v)=b)  \big].
\ee
Now define a field $K$ over Baire space using $G$ from \eqref{conq} in the same way as in the proof of Theorem \ref{hagio}. 
Apply $\ALCL$ for $K$ to obtain an isomorphism $h:K\di L\subset M$ from $K$ to a sub-field $L$ of an algebraically closed field $M$ over $\N^{\N}$.
It is then easy to see that for any $n^{0}$, we have
\be\label{simsma}
(\exists f^{1})(Y(f)=n)\asa (\forall b^{1})\big((b\in M\wedge b^{2}=p_{n} )\di b\in L\big).
\ee
Now apply $\Delta\text{-}\CA$ to \eqref{simsma} to obtain the set $X$ as required by $\RANGE$.
\end{proof}
As in Section \ref{forder},  the field $K$ from the previous proof is actually countable as it can be viewed as a sub-field of $\overline{\Q}$.  
We could therefore restrict $\ALCL$ to such fields without loss of generality, i.e.\ the previous theorem would still go through.  

\smallskip

Finally, it goes without saying that one can obtain $\RANGE^{1}$ from $\ALCL$ generalised to sub-sets of $\mathcal{N}$, while
`even larger' sets give rise to the existence of the range of functionals with `even larger' domain. 

\subsubsection{At the level of weak K\"onig's lemma}\label{ALA2}
We lift the following implication to higher-order arithmetic: \emph{that any countable field has a unique algebraic closure, implies weak K\"onig's lemma} (see \cite{simpson2}*{IV.5.2}).
This result is based on a recursive counterexample by Ershov from \cite{erjov}, as noted in \cite{fried4}*{p.\ 145}.

\smallskip

Our higher-order version of the aforementioned theorem is then as follows.
\bdefi[$\UACL$]
 A field over $\N^{\N}$ has a unique algebraic closure, i.e.\ if there are two algebraic closures $H_{i}:K\di K_{i}$ for $i=1,2$, then there is an isomorphism $H:K_{1}\di K_{2}$ such that $H\circ H_{1}=H_{2}$ on $K$.
\edefi

Since the proof is again quite similar to the above results, we shall be brief.
\begin{thm}
The system $\ACAo+\QFAC^{0,1}$ proves that $\UACL\di \SEP^{1}$,
\end{thm}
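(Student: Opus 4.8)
The plan is to adapt the proof of Theorem \ref{hagio}, which already produces $\SEP^{1}$, by replacing the appeal to $\ORD$ with an appeal to $\UACL$, following the classical reversal \cite{simpson2}*{IV.5.2} that rests on Ershov's recursive counterexample \cite{erjov}. First I would fix $Y^{2}, Z^{2}$ as in the antecedent of $\SEP^{1}$, i.e.\ with $(\forall f^{1}, g^{1})(Y(f)\ne Z(g))$, and let $p_{k}$ enumerate the primes. As in Theorems \ref{hagio} and \ref{firstcome}, the obstruction that the relevant field is an infinite union $\cup_{f}K_{\langle f\rangle}$ of finitely generated subfields of $\overline{\Q}$---which cannot be formed directly in a weak system---is circumvented by the functional $G^{2}$ of \eqref{conq2}, obtained from the proof of Theorem \ref{rage} via \eqref{taff}. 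This $G^{2}$ enumerates the intended field elements and lets me define a field $K$ over Baire space whose operations are computed `through' $G$ exactly as in the proof of Theorem \ref{hagio}.

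The substantive new ingredient is the algebraic design forcing the uniqueness isomorphism to encode the separation. I would arrange $K$ together with two algebraic closures $H_{1}:K\di K_{1}$ and $H_{2}:K\di K_{2}$ over Baire space so that the conjugacy (sign) choices available in passing to an algebraic closure are pinned, in $K_{1}$, by $\range(Y)$ and, in $K_{2}$, by $\range(Z)$: concretely, the generator associated with $p_{k}$ is made to sit on a prescribed side precisely when $k$ lies in the appropriate range. Applying $\UACL$ then yields a $K$-isomorphism $H:K_{1}\di K_{2}$ with $H\circ H_{1}=H_{2}$ on $K$, and since $H$ must respect these pinned elements, its behaviour on the generator indexed by $k$ decides, uniformly in $k$, whether $k\in\range(Y)$ or $k\in\range(Z)$. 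This produces, for each $k^{0}$, an equivalence whose left-hand side $(\exists f^{1})(Y(f)=k)$ is $\Sigma_{1}^{1}$ and whose right-hand side is a $\Pi_{1}^{1}$ condition on $H$ (with $Y, Z, H$ as parameters), parallel to \eqref{foos} in the proof of Theorem \ref{hagio}, and hence exactly of the form to which $\Delta\text{-}\CA$ applies.

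Finally I would invoke Theorem \ref{DELTA}, by which $\QFAC^{0,1}$ (present in the base theory) proves $\Delta\text{-}\CA$, and apply the latter to the displayed equivalence to obtain a set $X$ with $(\forall f^{1})(Y(f)\in X\wedge Z(f)\not\in X)$, which is precisely what $\SEP^{1}$ demands. I expect $\QFAC^{0,1}$ to earn its place twice over: once, as just noted, to supply $\Delta\text{-}\CA$, and once to actually manufacture the two algebraic closures $K_{1}, K_{2}$ as bona fide objects over Baire space, since $\UACL$ is only a \emph{conditional} asserting uniqueness and we must ourselves exhibit the closures by making coherent choices of roots along an enumeration of $K$.

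The main obstacle, I expect, is the algebraic bookkeeping rather than the logic: one must build $K_{1}$ and $K_{2}$ explicitly over Baire space (again via a $G$-style enumeration) and verify that the twist distinguishing them is calibrated so that the existence of the commuting isomorphism $H$ is equivalent, prime by prime, to membership in $\range(Y)$ versus $\range(Z)$---mirroring Ershov's construction \cite{erjov} but now with type-two functionals in place of the injections of \cite{simpson2}*{IV.4.4, IV.5.2}. Once this calibration is in place, the extraction of $X$ via $\Delta\text{-}\CA$ is routine, exactly as in Theorems \ref{hagio} and \ref{firstcome}.
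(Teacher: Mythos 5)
Your overall architecture matches the paper's: the field $K$ over Baire space obtained through an enumeration functional as in \eqref{conq33}, two algebraic closures differing by a sign twist on the $\sqrt{p_{Z(g)}}$-generators, an appeal to $\UACL$ for a commuting isomorphism, and extraction of the separating set from that isomorphism. The gap is in your extraction step. You want, for each $k$, an equivalence between $(\exists f^{1})(Y(f)=k)$ and a $\Pi_{1}^{1}$ condition on the isomorphism, so that $\Delta\text{-}\CA$ applies. No such equivalence is available here: for $k$ outside \emph{both} ranges, $\sqrt{p_{k}}$ does not lie in (the image of) $K$ at all -- unlike the field in Theorem \ref{hagio}, which includes the third family of generators $\{\sqrt{p_{k}}:k<|w|\}$ precisely so that \emph{every} $k$ has a label, making the two sides of \eqref{foos} match -- and hence the commuting condition $H\circ H_{1}=H_{2}$ places no constraint whatsoever on what the isomorphism does to the square roots of such $p_{k}$. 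Consequently any `some label is fixed' formulation is false for such $k$ while the corresponding `every label is fixed' formulation is vacuously true, so the $\Sigma_{1}^{1}$ and $\Pi_{1}^{1}$ candidates disagree, and nothing in the construction yields a $\Pi_{1}^{1}$ condition equivalent to membership in the range of $Y$. Note also that repairing this by adding the third family of generators would destroy the definability of the twisted embedding: one would then have to decide, for an arbitrary generator $\sqrt{p_{k}}$, whether $k$ lies in the range of $Z$, which is exactly the kind of $\Sigma_{1}^{1}$ question one cannot answer.

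The paper's proof avoids this entirely, and that is the idea you are missing: both algebraic closures are taken to be $\overline{\Q}$ itself (available in $\RCA_{0}$ by \cite{simpson2}*{II.9.7}), equipped with the explicitly defined embedding $H_{1}$ and its twist $H_{2}$, the latter being well defined because the ranges of $Y$ and $Z$ are disjoint. Then $\UACL$ yields an automorphism $h$ of the \emph{countable} field $\overline{\Q}$, and one simply sets $X:=\{k: h(\sqrt{p_{k}})=\sqrt{p_{k}}\}$; this is a decidable condition on $k$ (equality in $\overline{\Q}$ being decidable), so $X$ exists with no appeal to $\Delta\text{-}\CA$ and no equivalence to verify. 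Crucially, $X$ need not equal the range of $Y$: it merely contains that range and avoids the range of $Z$, which is all that $\SEP^{1}$ demands. For the same reason, your second proposed use of $\QFAC^{0,1}$ -- to `manufacture' the closures by making coherent choices of roots along an enumeration of $K$ -- is unnecessary: the closures are the concrete field $\overline{\Q}$ with explicit embeddings, and no choices of roots ever have to be made.
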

\begin{proof}
Let $p_{n}, Y, Z$ be as in the proof of Theorem \ref{hagio} and define $K_{w}$ as the sub-field of $\overline{\Q}$ generated by the following finite set:
\[\textstyle
\{\sqrt{p_{Y(w(i))}}:i<|w|\}\cup \{{\sqrt{  p_{Z(w(j))}}}:j<|w|\}.
\]
As in the previous proofs, there is $H_{1}^{2}$ with: 
\be\label{conq33}
\big(\forall b\in \overline{\Q}\big)\big[ (\exists w^{1^{*}})(b\in K_{w})\asa (\exists v^{1^{*}})(H_{1}(v)=b)  \big],
\ee
and let $K$ be the field over $\N^{\N}$ induced by $H_{1}$, like in the proof of Theorems~\ref{hagio} and~\ref{firstcome}. 
Now define another monomorphism $H_{2}$ where $H_{2}(v)$ is just $H_{1}(v)$, except with the coefficients of any $\sqrt{p_{Z(g)}}$ negated. 
Then $\UACL$ provides an isomorphism $h$ satisfying $h(H_{1}(v))=H_{2}(v)$ for $v\in K$.  
One readily proves $h(\sqrt{p_{Y(f)}})=\sqrt{p_{Y(f)}}$ and $h(\sqrt{p_{Z(g)}})=-\sqrt{p_{Z(g)}}$, namely just as in \cite{simpson2}*{IV.5.2}.  The set $X=\{k: h(\sqrt{p_{k}})=\sqrt{p_{k}}  \}$ is then 
clearly as required by $\SEP^{1}$.
\end{proof}
Finally, it goes without saying that one can obtain the higher-order counterpart of $\SEP^{1}$ from $\UACL$ generalised to sub-sets of $\mathcal{N}$, while
`even larger' sets give rise to the existence of the range of functionals with `even larger' domain. 
\subsubsection{More results in algebra}\label{moar}
Algebraic field extensions are studied in RM in \cite{dddorairs, dddorairs2} and we lift some of these results to higher-order RM.  
Note that \cite{dddorairs} is published as \cite{dddorairs2}, but we use the numbering from the former. 

\smallskip

First of all, we study \cite{dddorairs}*{Theorem 9} which establishes the equivalence between $\WKL_{0}$ and the following theorem restricted to countable fields. 
\bdefi[$\AUTO$]
Let $F$ be a field over $\N^{\N}$ with an algebraic closure $\overline{F}$. If $\alpha\in \overline{F}$ and $\varphi:F(\alpha)\di F(\alpha)$ is an $F$-automorphism of $F(\alpha)$, then $\varphi$ extends to an $F$-automorphism of $\overline{F}$.
\edefi
The notions \emph{algebraic extension} and \emph{$F$-automorphism} are defined as expected, namely as follows.  These definitions are taken from \cite{dddorairs}.
\bdefi[$\RCAo$]
An \emph{algebraic extension} of a field $F$ over $\N^{\N}$ is a pair $\langle K, \phi\rangle$, where $K$ is a field over $\N^{\N}$, $\phi$ is an embedding of $F$ into $K$, and for every $a\in K$ there is a nonzero $f(x)\in F[x]$  such that $\phi(f)(a)=0$. 
\edefi
When appropriate, we drop the mention of $\phi$ and denote the extension by $K$.
\bdefi[$\RCAo$] 
Suppose $\langle K,\phi \rangle$ and $\langle J, \psi \rangle $ are algebraic extensions of $F$. 
We say \emph{K is embeddable} in $J$ over $F$ (and write $K \preceq_{F}J$) if there is an embedding $\tau: K\di J$ such that for all $x \in F$, $\tau(\phi(x)) = \psi(x)$. 
We also say that `$\tau$ fixes $F$' and call $\tau$ an `$F$-embedding'. If $\tau$ is also bijective, we say $K$ is isomorphic to $J$ over $F$, write $K\cong_{F} J$, and call $\tau$ an $F$-isomorphism.
\edefi
In line with the previous definitions, if $K$ is an algebraic extension of $F$ that is algebraically closed, we say $K$ is an algebraic closure of $F$, and often write $\overline{F}$ for $K$.
We also need to (carefully) define the notion of extension and restriction.  
\bdefi[$\RCAo$]
Suppose $\tau: F\di G$ is a field embedding for fields $F, G$ over $\N^{\N}$, $\langle K, \phi\rangle$ is an extension of $F$, $\langle H, \psi \rangle $ is an extension of $G$, and $\theta:K\di H$ satisfies $\theta(\phi(v))=\psi(\tau(v))$ for all $v\in F$. Then we say: $\theta$ extends $\tau$, or: $\theta$ is an extension of $\tau$, or: $\theta$ restricts to $\tau$, or: $\tau$ is a restriction of $\theta$.
\edefi
With these definitions, the theorem $\AUTO$ makes sense in $\RCAo$, and we have the following theorem, obtained by lifting part of the proof of \cite{dddorairs}*{Theorem 9}.
\begin{thm}\label{contil}
The system $\ACAo$ proves $\AUTO\di \SEP^{1}$.
\end{thm}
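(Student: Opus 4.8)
The plan is to derive $\SEP^{1}$ directly. Since $\ACAo$ proves $(\exists^{2})$, the field equalities occurring below are decidable, so I may fix $Y^{2},Z^{2}$ satisfying the antecedent of $\SEP^{1}$, i.e.\ with $\range(Y)\cap\range(Z)=\emptyset$, and let $p_{k}$ enumerate the primes. The guiding idea is to build a field $F$ over Baire space together with the single element $\alpha=i=\sqrt{-1}$ and a single order-two $F$-automorphism $\varphi$ of $F(\alpha)$ whose $\AUTO$-extension to the algebraic closure is \emph{forced} to fix every $\sqrt{p_{Y(f)}}$ and to negate every $\sqrt{p_{Z(g)}}$; the separating set is then read off from this extension. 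Concretely, $F$ is generated over $\Q$ by the \emph{real} roots $\sqrt{p_{Y(f)}}$ together with the \emph{imaginary} roots $\sqrt{-p_{Z(g)}}\,(=i\sqrt{p_{Z(g)}})$, for all $f,g\in\N^{\N}$.

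As in Theorems~\ref{hagio} and~\ref{firstcome}, an infinitely generated field cannot be formed as a set in a weak system, so I would realise $F$ through finite approximations: for $w^{1^{*}}$ let $K_{w}$ be the sub-field of $\overline{\Q}$ generated by $\{\sqrt{p_{Y(w(i))}}:i<|w|\}\cup\{\sqrt{-p_{Z(w(j))}}:j<|w|\}$, which is definable from a finite generating set in $\RCA_{0}$. Theorem~\ref{rage} (via \eqref{taff}) then supplies $G^{2}$ with $(\forall b\in\overline{\Q})\big[(\exists w^{1^{*}})(b\in K_{w})\asa(\exists v^{1^{*}})(G(v)=b)\big]$, exactly as in \eqref{conq}, and I would use $G$ to define the field $F$ over Baire space verbatim as in the proof of Theorem~\ref{hagio}. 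For the algebraic closure demanded by $\AUTO$ I take $\overline{F}=\overline{\Q}$, which is algebraically closed and algebraic over $F$. A small Kummer-theoretic point is that $i\notin F$: the classes of $-1$ and of the distinct primes are independent in $\Q^{\times}/(\Q^{\times})^{2}$, and since $\range(Y)\cap\range(Z)=\emptyset$ the subgroup generated by $\{p_{Y(f)},-p_{Z(g)}\}$ cannot contain $-1$; hence $[F(i):F]=2$ and the nontrivial $F$-automorphism $\varphi$ of $F(\alpha)=F(i)$, determined by $\varphi(i)=-i$ and $\varphi|_{F}=\mathrm{id}$, genuinely exists (as a definable type-two map on $F(i)=\{a+bi:a,b\in F\}$).

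I would then apply $\AUTO$ to obtain an $F$-automorphism $\Phi$ of $\overline{F}$ extending $\varphi$. Since $\sqrt{p_{Y(f)}}\in F$, we get $\Phi(\sqrt{p_{Y(f)}})=\sqrt{p_{Y(f)}}$ for every $f$. The decisive computation is that each $\sqrt{p_{Z(g)}}$ already lies in the \emph{single} extension $F(i)$, namely $\sqrt{p_{Z(g)}}=-i\,\sqrt{-p_{Z(g)}}$, whence $\Phi(\sqrt{p_{Z(g)}})=\varphi(-i)\,\varphi(\sqrt{-p_{Z(g)}})=i\cdot\sqrt{-p_{Z(g)}}=-\sqrt{p_{Z(g)}}$ for every $g$. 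I would then define
\[
X:=\{k^{0}:\Phi(\sqrt{p_{k}})=_{\overline{\Q}}\sqrt{p_{k}}\},
\]
which exists by quantifier-free comprehension because $(\exists^{2})$ makes the displayed equality decidable; note that this reads the set off \emph{directly} from $\Phi$, so that neither $\Delta\text{-}\CA$ nor a fragment of choice is needed, which is exactly why $\ACAo$ suffices. By the two facts just established and the disjointness of the ranges, $(\forall f^{1})(Y(f)\in X)$ and $(\forall g^{1})(Z(g)\notin X)$, i.e.\ $X$ witnesses $\SEP^{1}$.

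The main obstacle is the one that also drives the countable argument of \cite{dddorairs}*{Theorem~9}: a simple extension $F(\alpha)$ cannot, of itself, pin down the signs of the infinitely many independent roots $\sqrt{p_{Z(g)}}$. The trick that makes everything go through is to place the \emph{twisted} generators $\sqrt{-p_{Z(g)}}$ into the base field $F$, so that adjoining the \emph{single} element $i$ recovers all the real roots $\sqrt{p_{Z(g)}}$ at once inside $F(i)$, whereupon the order-two automorphism $\varphi$ of this one extension negates them simultaneously. Verifying this over the Baire-space presentation of $F$ — in particular that $i\notin F$ and that $\varphi$ is well-defined and restricts correctly on the field coded by $G$ — is where the real care lies; the passage from the finite approximations $K_{w}$ to $F$ and the existence of $G$ are by now routine, being copied from Theorems~\ref{hagio} and~\ref{firstcome}.
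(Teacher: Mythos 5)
Your proposal is correct and takes essentially the same route as the paper's own proof: construct $F$ from the finite approximations $K_{w}$ via a range-functional $G$ as in \eqref{conq2}, place \emph{twisted} copies of the $Z$-roots inside the base field $F$, adjoin a single quadratic element whose conjugation automorphism is then extended by $\AUTO$ to $\overline{F}=\overline{\Q}$, and read the separating set off directly from that extension (so that, as you observe, neither $\Delta\text{-}\CA$ nor countable choice is needed, matching the paper's base theory $\ACAo$). The only difference is the choice of twisting element: the paper twists by $2$ (putting $\sqrt{2p_{Z(g)}}$ into $F$ and adjoining $\sqrt{2}$, with $\sqrt{2}\notin F$ supplied by \cite{dddorairs}*{Lemma 5}), whereas you twist by $-1$ (putting $\sqrt{-p_{Z(g)}}$ into $F$ and adjoining $i$, with $i\notin F$ via your correct Kummer-theoretic argument, which uses the disjointness of the ranges exactly where the paper does and even spares you the paper's normalisation that $p_{0}=2$ and $0\notin\range(Y)\cup\range(Z)$).
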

\begin{proof}
Let $p_{n}, Y, Z$ be as in the proof of Theorem \ref{hagio}; we additionally assume that $2$ is $p_{0}$ and that $0$ is not in the range of $Y, Z$.  
The field $F_{w}=\Q(\sqrt{p_{Y(f)}}, \sqrt{2p_{Z(f)}}: f\in w^{1^{*}})$ exists in $\RCAo$, but the field $F=\Q(\sqrt{p_{Y(f)}}, \sqrt{p_{2Z(f)}}: f\in \N^{\N})$ cannot be defined directly. 
To define this field \emph{indirectly}, an equivalence similar to \eqref{conq2} can be used.  In fact, such a construction (for $\Sigma_{1}^{0}$-formulas defining sub-fields) is given in \cite{dddorairs}*{Lemma 3}.  
One therefore readily modifies \eqref{conq2} and \cite{dddorairs}*{Lemma 3} to define $F=\Q(\sqrt{p_{Y(f)}}, \sqrt{p_{2Z(f)}}: f\in \N^{\N})$ in $\RCAo$. 
By \cite{dddorairs}*{Lemma 5}, $\sqrt{2}\not \in F_{w}$ for any $w^{1^{*}}$, yielding $\sqrt{2}\not \in F$.  Choosing $\overline{F}=\overline{\Q}$, we have $\sqrt{2}\in \overline{F}$, and we may apply $\AUTO$ for $\varphi$ defined on $F(\sqrt{2})=\{a+ b\sqrt{2}:a,b\in F\}$ as $\varphi(a+b\sqrt{2}):= a-b\sqrt{2}$.  
The resulting $\overline{\varphi}$ is an automorphism on $\overline{F}$ that fixes $F$.   Now define $X:=\{n: \overline{\varphi}(\sqrt{p_{n}})=\sqrt{p_{n}}\}$ and note that this set separates the ranges of $Y, Z$ as follows:  
Since $\sqrt{p_{Y(f)}}\in F$ for any $f\in \N^{\N}$, we have $\overline{\varphi}(\sqrt{p_{Y(f)}})=\sqrt{p_{Y(f)}}$, and hence $(\exists f)(Y(f)=n)\di n\in X$.
Since $\sqrt{2p_{Z(g)}}\in F$ for any $g\in \N^{\N}$, we have $\overline{\varphi}(\sqrt{2p_{Z(g)}})=\sqrt{2p_{Z(g)}}=\sqrt{2}\sqrt{p_{Z(g)}}$ on one hand.  
On the other hand, $\overline{\varphi}(\sqrt{2p_{Z(g)}})=\overline{\varphi}(\sqrt{2})\overline{\varphi}(\sqrt{p_{Z(g)}}) =-\sqrt{2}\overline{\varphi}(\sqrt{p_{Z(g)}})$, as $\overline{\varphi}$ is a homeomorphism extending $\varphi$. 
As a result, $\overline{\varphi}(\sqrt{p_{Z(g)}})= - \sqrt{p_{Z(g)}}$, implying $Z(g)\not \in X$, as required for $\SEP^{1}$, and we are done. 
\end{proof}
\noindent
Next, let us point out two essential aspects of the previous proofs.
\begin{itemize}
\item In the countable case, finite (field) extensions can be directly defined in $\RCA_{0}$, but a `trick' is needed to similarly define infinite (field) extensions.  
The very same trick works for the uncountable case, with minimal modifications, as is clear from what follows after \eqref{conq2}.
\item The infinite (field) extension needs to have a certain property (`$\sqrt{2}\not\in F$' in the previous proof and `formally real' in the proof of Theorem \ref{hagio}).  
This property has a `compactness' flavour: if it holds for all finite (field) extensions, it holds for the infinite (field) extension.  
\end{itemize}
The previous items can perhaps inspire some kind of meta-theorem.  

\smallskip

Finally, one can similarly lift other results from \cite{dddorairs}, like the statement \emph{If $J$ is an algebraic extension of $F$, then $J$ has a root modulus}.
Indeed, the latter implies $\range$ by the proof of \cite{dddorairs}*{Theorem~17}, and one readily obtains $\RANGE$ when generalising the latter statements to fields over $\N^{\N}$.


\subsection{Rings and ideals}\label{maxiprime}
We lift the following implication to higher-order arithmetic: \emph{that any countable commutative ring has a maximal ideal, implies arithmetical comprehension} (see \cite{simpson2}*{III.5.5}).
This implication is obtained via $\range$, and the we shall obtain $\BOOT$ from the associated theorem for rings over $\N^{\N}$. 
\bdefi
A \emph{commutative ring $R$ over $\N^{\N}$} consists of a set $|R|\subseteq \N^{\N}$ with distinguished elements $0_{R}$ and $1_{R}$, an equivalence relation $=_{R}$, and operations $+_{R}$, $-_{R}$ and $\times_{R}$ on $|R|$ satisfying the usual ring axioms, including $0_{R}\ne_{R}1_{R}$.  Such a ring $R$ is an \emph{integral domain} if $(\forall s, r\in R)(r\times_{R}s=_{R}0\di r=_{R}0 \vee s=_{R}0)$.
\edefi
\bdefi
An \emph{ideal $I$} of a ring $R$ is a set $I\subseteq |R|$ such that $0_{R}\in I$ and $1_{R}\not\in I$, closed under $+_{R}$ and satisfying $(\forall a\in I,r\in R )( r \times_{R} a \in I)$.
An ideal $M$ is \emph{maximal} if $(\forall r\in R)(r\not \in M\di (\exists s\in R)((r\times_{R}s)-_{R}1\in M))$
\edefi
We again assume $(\exists^{2})$ to make sure there are `enough' binary relations.
\begin{thm}\label{morf}
Over $\ACAo$, either of the following implies $\BOOT$.
\begin{enumerate}
 \renewcommand{\theenumi}{\alph{enumi}}
\item A commutative ring over $\N^{\N}$ has a maximal ideal.
\item An integral domain over $\N^{\N}$ has a maximal ideal.\label{blag}
\end{enumerate}
\end{thm}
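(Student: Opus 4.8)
The plan is to run the by-now-standard template once more: derive $\RANGE$ from each item and then invoke Theorem \ref{rage} to obtain $\BOOT$. This mirrors the reversal in \cite{simpson2}*{III.5.5}, where the existence of a maximal ideal in a countable commutative ring is shown to yield $\range$ for one-to-one functions; here the functions become functionals and $\range$ becomes $\RANGE$, exactly as in Theorems \ref{hagio} and \ref{firstcome}.

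First I would fix $Y^{2}$ and build the ring \emph{indirectly}, precisely as in the proofs of Theorems \ref{hagio} and \ref{firstcome}. For each $w^{1^{*}}$ one defines a finitely generated ring $R_{w}$ from the finite data $\{Y(w(i)):i<|w|\}$, lifting the finite stages of the classical construction; as $R_{w}$ has finitely many generators it is available in $\RCAo$. The proof of Theorem \ref{rage} (and \eqref{taff} in particular) then provides $G^{2}$ satisfying an equivalence of the form \eqref{conq2}, and one uses $G$ to assemble a single ring $R$ over $\N^{\N}$, with $+_{R},\times_{R},-_{R}$ and the decidable equality $=_{R}$ defined on the finite-sequence codes exactly as in the proof of Theorem \ref{hagio}. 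Here $(\exists^{2})$, which $\ACAo$ provides, is what turns $=_{R}$ and the defining relation of $|R|$ into genuine binary relations as in Definition \ref{strijker}. Crucially, the construction can be arranged so that $R$ is an \emph{integral domain} (e.g.\ a localisation of a polynomial ring over $\Q$); then item \eqref{blag} applies to $R$, and item (a) applies a fortiori since a domain is in particular a commutative ring. Thus a single construction settles both items, with the integral-domain case \eqref{blag} being the formally stronger statement.

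Next I would apply the relevant maximal-ideal hypothesis to $R$, obtaining a maximal ideal $M$, which is given by a characteristic function $F_{M}^{2}$ on $|R|\subseteq\N^{\N}$ in the sense of Definition \ref{strijker}. Following the classical proof, I would exhibit for each $n^{0}$ a code $t_{n}$ (primitive recursive in $n$, $Y$ and $G$) for the coding element attached to $n$, and establish
\be\label{ringcode}
(\exists f^{1})(Y(f)=n)\asa F_{M}(t_{n})=1 .
\ee
Granting \eqref{ringcode}, the set $X$ with characteristic function $\lambda n.F_{M}(t_{n})$ is a \emph{bona fide} object of $\RCAo$ witnessing $\RANGE$: no instance of $\Delta\text{-}\CA$ is required, since the right-hand side of \eqref{ringcode} is \emph{decidable} once $F_{M}$ and $G$ are given. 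This is exactly why the base theory here is merely $\ACAo$, in contrast with Theorems \ref{hagio} and \ref{firstcome}, where the corresponding equivalence pitted a $\Sigma$-side against a $\Pi$-side and $\Delta\text{-}\CA$ was unavoidable.

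The hard part will be establishing \eqref{ringcode} for the maximal ideal $M$ actually handed to us, since maximal ideals are far from unique and an arbitrary prime ideal enjoys too much freedom on the ``non-range'' coordinates. The classical argument resolves this by a careful choice of relations—making the range-coordinates invertible, so that they are forced out of \emph{every} maximal ideal, while constraining the remaining coordinates so that they lie in \emph{every} maximal ideal—and it is precisely here that \emph{maximality} (rather than mere primality) is used. The delicate point of the lifting is to check that these relations survive the passage from the finite stages $R_{w}$ to the $G$-defined ring $R$; this is the ring-theoretic version of the ``compactness'' phenomenon recorded after Theorem \ref{contil}, namely that a property holding at every $R_{w}$ persists in $R$ because any would-be counterexample already lives in some $R_{w}$. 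Respecting this transcription while keeping $R$ an integral domain, so that item \eqref{blag} genuinely applies, is the additional constraint that makes the construction for \eqref{blag} slightly more delicate than the one for commutative rings.
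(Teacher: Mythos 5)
Your overall strategy coincides with the paper's: fix $Y^{2}$, indirectly define an integral domain of localisation type over $\N^{\N}$, apply item \eqref{blag} to obtain a maximal ideal $M$ (item (a) then follows \emph{a fortiori}, exactly as you note), show that $M$ decides the range of $Y$, and conclude $\BOOT$ via Theorem \ref{rage}. Concretely, the paper takes $K_{0}=\Q(\langle x_{n}:n\in\N\rangle)$ and lets $R$ consist of codes for those $b=r/s\in K_{0}$ whose denominator $s$ contains a monomial $q\, x_{Y(w(0))}^{e(0)}\times\dots\times x_{Y(w(k))}^{e(k)}$ supported on range-variables, the codes being supplied by a functional $G^{2}$ as in \eqref{conq3}. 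Where you genuinely depart from the paper is the endgame: the paper establishes \eqref{kloothommel}, pitting the $\Sigma$-statement $(\exists f^{1})(Y(f)=k)$ against the $\Pi$-statement $(\forall v^{1^{*}})(G(v)=x_{k}\di v\notin M)$, and closes by invoking $\Delta\text{-}\CA$; so the paper's own proof in fact runs over $\ACAo+\Delta\text{-}\CA$, in tension with the stated base theory of Theorem \ref{morf}. Your canonical-code device --- a code $t_{n}$ for $x_{n}$, primitive recursive in $Y$, e.g.\ packaging the representation $x_{n}x_{n_{0}}/x_{n_{0}}$ with witness $f_{0}=\lambda m.0$ and $n_{0}=Y(f_{0})$ --- makes the relevant side quantifier-free in $F_{M}$, so the witnessing set exists outright and $\Delta\text{-}\CA$ is avoided (granting the tacit assumption that $M$ respects $=_{R}$, so that one code's membership settles them all). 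If carried out, this would prove the theorem over $\ACAo$ exactly as stated, a small but real improvement on the paper's argument.

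The genuine gap is that you defer precisely the step that carries the mathematical content: the verification of your key equivalence for an \emph{arbitrary} maximal ideal of the $G$-defined ring. You describe the right strategy (range-coordinates invertible, hence outside every proper ideal; maximality rather than mere primality needed for the remaining coordinates --- correct, since in a domain the zero ideal is prime and contains no $x_{k}$), but the argument itself is absent, and it is not routine. In the paper it is a monomial-tracking computation: if $Y(f)=k$ then $1/x_{k}\in R$, so $x_{k}$ is a unit and no code of it lies in the proper ideal $M$; conversely, if a code of $x_{k}$ lies outside $M$, maximality yields $a\in R$ with $(a\times_{R}t_{k})-_{R}1_{R}\in M$, and since elements of $M$ cannot be units of $R$, writing $G(a)\times x_{k}-1=r/s$ with $s$ carrying a range-supported monomial and $r$ carrying none, one checks that either that monomial already involves $x_{k}$, or it survives uncancelled in $r+s$, forcing every denominator representing $G(a)=(r+s)/(x_{k}s)$ to be divisible by $x_{k}$; in both cases the range-supported monomial guaranteed by $a\in R$ involves $x_{k}$, whence $k=Y(w(i))$ for some entry $w(i)$ of its witness, i.e.\ $(\exists f^{1})(Y(f)=k)$. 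Without this (or an equivalent) computation, your remark that the relations ``survive the passage'' from the finite stages to $R$ is an assertion, not a proof. A small symptom of the missing details: your displayed equivalence has polarity $F_{M}(t_{n})=1$, i.e.\ $n$ in the range iff $t_{n}\in M$, which contradicts your own (correct) description that range-coordinates are forced \emph{out} of every maximal ideal; with the construction above it must read $F_{M}(t_{n})=0$.
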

\begin{proof}
We show that item \eqref{blag} implies $\RANGE$.  
To this end, fix $Y^{2}$ and let $R_{0}=\Q[\langle x_{n}:n\in \N\rangle ]$ be the polynomial ring over $\Q$ with indeterminates $x_{n}$.  
Let $K_{0}=\Q(\langle x_{n}:n\in \N\rangle) $ be the associated field of fractions, i.e.\ consisting of $r/s$ where $r, s\in R_{0}$ and $s\ne_{R_{0}} 0$.
Note that $R_{0}$ and $K_{0}$ can be defined in $\RCA_{0}$ by \cite{simpson2}*{II.9}.  Now let $\varphi(b)$ be the formula expressing that $b=r/s\in K_{0} $ and there is non-empty $w^{1^{*}}, e^{0^{*}}$ and non-zero $ q\in \Q$ such that $|w|=|e|$ and some monomial in $s$ has the form $q \times x_{Y(w(0))}^{e(0)}\times \dots \times x_{Y(w(k))}^{e(k)}$ for $k=|w|-1$.
Like for \eqref{conq2} in Theorem \ref{hagio}, there is $G^{2}$ such that 
\be\label{conq3}
(\forall b \in K_{0})\big[\varphi(b) \asa (\exists v^{1^{*}})(G(v)=b) \big].
\ee 
Using $G$ as in \eqref{conq3}, define a countable integral domain $R$ by `pulling back via $G$', i.e.\ in the same way as $K$ is defined in the proof of Theorem \ref{hagio}, just below \eqref{conq2}.
Note that in particular $u^{1^{*}}=_{R}v^{1^{*}}$ if and only if $G(u)=G(v)$.
Let $M$ be the maximal ideal provided by item \eqref{blag} and consider the following for any $k\in \N$:
\be\label{kloothommel}
(\exists f^{1})(Y(f)=k)\asa (\forall v^{1^{*}})(G(v)=x_{k}\di v\not \in M).
\ee
which ends the proof by applying $\Delta\text{-}\CA$.  To prove \eqref{kloothommel}, if $(\exists f^{1})(Y(f)=k)$, then $\varphi(1/x_{k})$ and $G(v)=\frac{1}{x_{k}}$ for some $v^{1^{*}}$ by \eqref{conq3}.
Since also $\varphi(x_{k})$, any $v'$ such that $G(v')=x_{k}$ satisfies $v \times v' =_{R}1_{R}$, and hence $v'\not \in M$ as the latter is an ideal.   
Now assume the right-hand side of \eqref{kloothommel} and let us show that there is $v_{0}^{1^{*}}$ be such that $G(v_{0})=x_{k}$.  Fix $f_{0}, f_{1}$ such that $n_{0}:=Y(f_{0})$ is different from $n_{1}:=Y(f_{1})$ and define
 $b_{0}:=\frac{x_{k}x_{n_{0}}}{x_{n_{1}}}$ and $b_{1}:= \frac{x_{n_{1}}}{x_{n_{0}}}$.  Since $\varphi(b_{0})$ and $\varphi(b_{1})$, there are $a_{0}$ and $a_{1}$ such that $G(a_{0})=b_{0}$ and $G(a_{1})=b_{1}$ by \eqref{conq3}.
Then $G(a_{0}\times a_{1})=G(a_{0})\times G(a_{1})=b_{0}\times b_{1}=x_{k}$, as required.   Thus, there is $v_{0}^{1^{*}}$ such that $G(v_{0})=x_{k}$, implying $v_{0}\not \in M$ by assumption.  As $M$ is maximal, there is $a^{1^{*}}\in R$ such that $b:=(a\times v)-1_{R}$ satisfies $b\in M$.   Now, $G(b)=\frac{r}{s}$ for some $r, s\in R_{0}$ and $s\ne 0_{R}$; since $M$ is an ideal, the inverse of $b$ cannot be in $R$, i.e.\ $r$ does not contain any monomial of the aforementioned form  $q \times x_{Y(w(0))}^{e(0)}\times \dots \times x_{Y(w(k))}^{e(k)}$, while of course $s$ does by \eqref{conq3}.  Applying $G$ to $b=(a\times v)-1_{R}$, we get $\frac{r}{s}=G(b)=G(a)\times x_{k}-1_{R}$, which  implies $r+s=G(a)\times x_{k}\times s$.  By definition, no non-trivial term can divide both $r$ and $s$, so $a$ must contain $\frac{1}{x_{k}\times s}$.
There must therefore be $f^{1}$ such that $Y(f)=k$, again by definition, and \eqref{kloothommel} follows. 
 %
\end{proof}
One could obtain similar results based on the results in \cite{hatsjie}.
It goes without saying that one can obtain $\RANGE^{1}$ from the above items generalised to sub-sets of $\mathcal{N}$, while
`even larger' sets give rise to the existence of the range of functionals with `even larger' domain. 

%
%
\section{Conclusion}
We finish this paper with some conceptual and foundational remarks.
\subsection{Future work and alternative approaches}
In this section, we discuss future work and alternative approaches.  

\smallskip

First of all, it goes without saying that it is possible to obtain similar results for recursive counterexamples or reversals that are similar in kind to the ones treated above.  
Nonetheless, there are other second-order proofs that do not seem to lift to higher-order arithmetic, \emph{try as we might}.  One example is from group theory and provided by \cite{simpson2}*{III.6.5}; the proof of the latter seems 
straightforward, but so far we have been unable to provide a lifting.  The problem seems to be that we do not have any control over group generators of the kind $p_{Y(f)}$ for $Y^{2}, f^{1}$.   
One obvious question is: \emph{is there an alternative proof similar to other reversals that does lift to higher-order arithmetic}?
Other examples are from combinatorics, e.g.\ \cite{simpson2}*{III.7.5}, where the problem seems to be the lack of structure, compared to e.g.\ analysis.  
While natural numbers are `flexible' when it comes to coding (one readily switches between `the natural number' and `the objects it codes'), elements of $\R$ or $\N^{\N}$ do not have the same flexibility.  
Of course, fragments of e.g.\ Ramsey's theorem are also just false for uncountable cardinalities (see e.g. \cite{reim}). 

\smallskip

Secondly, we discuss possible alternative approaches, and why they are not fruitful. 
Now, recursive counterexamples often give rise to \emph{Brouwerian counterexamples}, and vice versa (see \cite[{p.\ xii}]{recmath1} for this opinion).  
A Brouwerian counterexample to a theorem shows that the latter is rejected in (a certain strand of) constructive mathematics (see \cite{mandje2} for details). 
We choose to use recursive counterexamples (and the associated RM results) because those are formulated in a formal system, which enables us to 
lift the associated proofs without too much trouble.  The same would not be possible for Brouwerian counterexamples, due to the lack of an explicit/unified choice of formal system for e.g.\ Bishop's constructive mathematics.  To be absolutely clear, there is nothing \emph{wrong} with constructive mathematics in general; \emph{however}, the lack of an explicit/unified formal system for constructive mathematics means that 
we cannot `lift' Brouwerian counterexamples with the same ease (or at all). 

\smallskip

Thirdly, the study of RM focuses on `ordinary' mathematics, which is usually interpreted as meaning `non-set theoretical' mathematics.
In particular, countable algebra is universally agreed to be ordinary mathematics.   
On one hand, Simpson states in \cite{takeuti}*{p.\ 432} that \emph{uncountable} algebra is not part of ordinary mathematics.  
On the other hand, the above results suggest that certain (reversal) techniques pertaining to countable algebra readily generalise to uncountable algebra, 
different in kind as the latter may be, according to Simpson.  Alternatively, one can view the results in this paper pertaining to algebra as part of \emph{countable} algebra, where the latter notion has its usual meaning.
Indeed, the field $K$ from the proof of Theorem~\ref{hagio} is countable as it can be viewed as a sub-field of $\overline{\Q}(\sqrt{-1})$.  
The same applies to the field from the proof of Theorem \ref{firstcome}.

\subsection{The bigger picture}\label{KUT}
We discuss how the above results fit into the bigger picture provided by \cites{dagsamIII, dagsamV, dagsamVI, dagsamVII, samph}.
\emph{In our opinion}, one {reasonable} interpretation of the results in this paper is that second- and higher-order arithmetic are not \emph{as} different as sometimes claimed, and that
(liftings of) recursive counterexamples and reversals provide a (partial) bridge of sorts between the two, as follows.  

\smallskip  
  
In a nutshell, Kohlenbach's higher-order RM (see Section \ref{prelim1}) is based on \emph{comprehension} and \emph{discontinuity}, while the aforementioned principles $\BOOT$, $\HBU$, and $\SEP^{1}$ cannot be captured well in this hierarchy, necessitating a new `continuity' hierarchy based on the \emph{neighbourhood function principle} $\NFP$, as first developed in \cite{samph}.
The results in this paper constitute liftings from second-order RM to this (new) $\NFP$-based hierarchy.
Let us discuss the previous claim in a lot more detail.  

\smallskip

First of all, as noted in Section \ref{prelim2}, Kohlenbach's counterpart of arithmetical comprehension $\ACA_{0}$ is given by $(\exists^{2})$, and the functional in the 
latter is clearly \emph{discontinuous}.  As shown in \cite[\S3]{kohlenbach2}, this axiom is also equivalent to e.g.\ the existence of a discontinuous function on $\R$.  
Of course, $(\exists^{2})$ follows from the Suslin functional $(\SS^{2})$ and the higher-order systems $\SIXK$ \emph{mutatis mutandis}.  
In a nutshell, Kohlenbach's higher-order comprehension is based on discontinuity. 

\smallskip

Secondly, one of the the main (conceptual) results of \cite{dagsamIII} is that higher-order comprehension does not capture e.g.\ Heine-Borel compactness well at all.  
Indeed, while $\Z_{2}^{\Omega}$ proves $\HBU$, the system $\Z_{2}^{\omega}+\QFAC^{0,1}$ does not.
The same holds for $\BOOT$ and related theorems like the Lindel\"of lemma for Baire space (see \cite{dagsamIII} for the latter). 
Since all the aforementioned theorems have relatively weak first-order strength (at most $\ACA_{0}$) in isolation, higher-order comprehension seems unsuitable for capturing them.  
\emph{By contrast}, we show in \cite{samph}*{\S5} that these axioms are equivalent to natural fragments of the \emph{neighbourhood function principle} $\NFP$, which is as follows:
\be\label{durgo}
(\forall f^{1})(\exists n^{0})A(\overline{f}n)\di (\exists \gamma\in K_{0})(\forall f^{1})A(\overline{f}\gamma(f)), 
\ee
for any formula $A\in \L_{\omega}$ and where `$\gamma\in K_{0}$' means that $\gamma^{1}$ is a total associate/RM code on $\N^{\N}$.  
Clearly, \eqref{durgo} expresses the existence of a continuous choice function.  
As it happens, $\NFP$ is a (classically valid) schema from \emph{intuitionistic analysis}, introduced in \cite{KT} and studied in \cite{troeleke1}.  

\smallskip

The previous two paragraphs merely suggest that different scales (higher-order comprehension versus $\NFP$) are better at capturing different concepts. 
The scientific enterprise is replete with examples of this state of affairs.  With the gift of hindsight, it perhaps even seems naive to believe that 
axioms consistent with Brouwer's continuity axioms (like $\HBU$ and $\BOOT$) can be captured (well) using axioms expressing discontinuity (like $\SIXK$). 
Thus, analysing third-order theorems already requires two fundamentally different scales (higher-comprehension $(\SS_{k}^{2})$ versus $\NFP$) that meet
up at $\Z_{2}^{\Omega}$, as $(\exists^{3})$ trivially yields $\SIXK$, but also $\NFP$ for any $A$ involving type 0 and 1 quantifiers only.

\smallskip

In conclusion, while comprehension is generally a great axiom schema for classifying theorems in RM (of any order), principles like $\HBU$ do not have
a nice classification based on (Kohlenbach's higher-order) comprehension.  By contrast, the latter do have a nice classification based on $\NFP$.  Now, higher-order comprehension amounts to 
\emph{discontinuity}, while $\NFP$ is a \emph{continuity} schema, stating as it does the existence of a continuous choice function.  In this light, higher-order 
arithmetic includes (at least) two `orthogonal' scales (one based on continuity, one based on discontinuity) for classifying theorems.  The `liftings' in this paper 
generalise second-order theorems to higher-order theorems from the $\NFP$-scale \emph{with little modification}. 
One disadvantage is that the `lifted' proofs from this paper often use more comprehension or countable choice than the known proofs (see Section \ref{specknetssub}).  
   
\subsection{Some technical remarks}  
In this section, we discuss the highly useful $\ECF$-interpretation and related concepts. 
\subsubsection{The $\ECF$-interpretation}\label{ECF}
The (rather) technical definition of $\ECF$ may be found in \cite{troelstra1}*{p.\ 138, \S2.6}.
Intuitively, the $\ECF$-interpretation $[A]_{\ECF}$ of a formula $A\in \L_{\omega}$ is just $A$ with all variables 
of type two and higher replaced by countable representations of continuous functionals.  Such representations are also (equivalently) called `associates' or `RM-codes' (see \cite{kohlenbach4}*{\S4}). 
The $\ECF$-interpretation connects $\RCAo$ and $\RCA_{0}$ (see \cite{kohlenbach2}*{Prop.\ 3.1}) in that if $\RCAo$ proves $A$, then $\RCA_{0}$ proves $[A]_{\ECF}$, again `up to language', as $\RCA_{0}$ is 
formulated using sets, and $[A]_{\ECF}$ is formulated using types, namely only using type zero and one objects.  

\smallskip

In light of the widespread use of codes in RM and the common practise of identifying codes with the objects being coded, it is no exaggeration to refer to $\ECF$ as the \emph{canonical} embedding of higher-order into second-order RM.  

\subsubsection{The scope of $\ECF$}
The $\ECF$ interpretation is a useful tool for gauging the strength of a given theorem.  For instance $[\HBU]_{\ECF}$ is essentially the Heine-Borel theorem for countable coverings, and the latter is equivalent to $\WKL$ by \cite{simpson2}*{IV.1}.

\smallskip

However, $[(\exists^{2})]_{\ECF}$ is `$0=1$' as $\exists^{2}$ is discontinuous on $2^{\N}$ and therefore cannot be represented by an associate. 
Hence, $\ECF$ cannot be applied to most results in this paper, at first glance, as we have mostly worked in $\ACAo$.  The truth is that 
most results, including Theorems \ref{proofofconcept}, \ref{cokkl}, \ref{dontlabel}, \ref{hagio}, \ref{firstcome}, \ref{contil} and \ref{morf}, and Corollary~\ref{rampant} also go through for $\ACAo$ replaced by $\RCAo$.  

\smallskip

To establish the previous claim, one makes use of the following fragment of the law of excluded middle: $(\exists^{2})\vee \neg(\exists^{2})$.  In the former case, one uses the above proof in $\ACAo$.  In the latter case, all functions on $\R$ and $\N^{\N}$ are continuous by \cite{kohlenbach2}*{\S3}; in this case one readily verifies that the (higher-order) conclusion always reduces to the second-order version. 
For example, the canonical covering $\cup_{x\in [0,1]}I_{x}^{\Psi}$ in $\HBU$ reduces to the countable covering $\cup_{q\in [0,1]\cap \Q}I_{q}^{\Psi}$ in case $\Psi$ is continuous. 

\smallskip

The aforementioned `excluded middle trick' was pioneered in \cite{dagsamV} and forms the basis of many results in \cites{samsplit, dagsamVII}. 
We have avoided this trick and $\ECF$ so as to provide a smoother presentation of the above material. 

\subsubsection{The nature of $\ECF$}
We discuss the meaning of the words `$A$ is converted into $B$ by the $\ECF$-translation', used throughout \cite{samph}.  
Example are: $\BOOT$ and $\HBU$ are converted to $\ACA_{0}$ and $\WKL_{0}$ by $\ECF$.

\smallskip

Such statement is obviously not to be taken literally, as $[\BOOT]_{\ECF}$ is not verbatim $\ACA_{0}$.  
Nonetheless, $[\BOOT]_{\ECF}$ follows from $\ACA_{0}$ by noting that $(\exists f^{1})(Y(f, n)=0)\asa (\exists \sigma^{0^{*}})(Y(\sigma*00, n)=0)$ for continuous $Y^{2}$ (see \cite{samph}*{\S3}).  Similarly, $[\HBU]_{\ECF}$ is not verbatim the Heine-Borel theorem for countable covers, but the latter does imply the former by noting that for continuous functions, the associated canonical cover has a trivial countable sub-cover enumerated by the rationals in $[0,1]$. 

\smallskip

In general, that continuous objects have countable representations is the very foundation of the formalisation of mathematics in $\L_{2}$, and identifying continuous objects and their countable representations is routinely done.  
Thus, when we say that $A$ is converted into $B$ by the $\ECF$-translation, we mean that $[A]_{\ECF}$ is about a class of continuous objects to which $B$ is immediately seen to apply, with a possible intermediate step involving representations.  
Since this kind of step forms the bedrock of classical RM, it would therefore appear harmless in this context.

\begin{ack}\rm
Our research was supported by the John Templeton Foundation via the grant \emph{a new dawn of intuitionism} with ID 60842.
We express our gratitude towards this institution. 
We thank Anil Nerode and Paul Shafer for their valuable advice.  
We also thank the anonymous referees for their helpful suggestions.  
Opinions expressed in this paper do not necessarily reflect those of the John Templeton Foundation.    
\end{ack}

\begin{bibdiv}
\begin{biblist}
\bib{avi2}{article}{
  author={Avigad, Jeremy},
  author={Feferman, Solomon},
  title={G\"odel's functional \(``Dialectica''\) interpretation},
  conference={ title={Handbook of proof theory}, },
  book={ series={Stud. Logic Found. Math.}, volume={137}, },
  date={1998},
  pages={337--405},
}

\bib{zonderfilter}{article}{
  author={Bartle, Robert G.},
  title={Nets and filters in topology},
  journal={Amer. Math. Monthly},
  volume={62},
  date={1955},
  pages={551--557},
}

\bib{brie}{article}{
  author={van den Berg, Benno},
  author={Briseid, Eyvind},
  author={Safarik, Pavol},
  title={A functional interpretation for nonstandard arithmetic},
  journal={Ann. Pure Appl. Logic},
  volume={163},
  date={2012},
  number={12},
  pages={1962--1994},
}

\bib{browner2}{article}{
  author={Brown, Douglas K.},
  title={Notions of closed subsets of a complete separable metric space in weak subsystems of second-order arithmetic},
  conference={ title={Logic and computation}, address={Pittsburgh, PA}, date={1987}, },
  book={ series={Contemp. Math.}, volume={106}, publisher={Amer. Math. Soc., Providence, RI}, },
  date={1990},
  pages={39--50},
}

\bib{browner}{article}{
  author={Brown, Douglas K.},
  title={Notions of compactness in weak subsystems of second order arithmetic},
  conference={ title={Reverse mathematics 2001}, },
  book={ series={Lect. Notes Log.}, volume={21}, publisher={Assoc. Symbol. Logic}, },
  date={2005},
  pages={47--66},
}

\bib{boekskeopendoen}{book}{
  author={Buchholz, Wilfried},
  author={Feferman, Solomon},
  author={Pohlers, Wolfram},
  author={Sieg, Wilfried},
  title={Iterated inductive definitions and subsystems of analysis: recent proof-theoretical studies},
  series={LNM 897},
  publisher={Springer},
  date={1981},
  pages={v+383},
}

\bib{cousin1}{article}{
  author={Cousin, Pierre},
  title={Sur les fonctions de $n$ variables complexes},
  journal={Acta Math.},
  volume={19},
  date={1895},
  pages={1--61},
}

\bib{dddorairs}{article}{
  author={Dorais, Fran\c {c}ois G.},
  author={Hirst, Jeffry},
  author={Shafer, Paul},
  title={Reverse mathematics and algebraic field extensions},
  journal={Preprint, arxiv: \url {https://arxiv.org/abs/1209.4944}},
  date={2013},
  pages={pp. 25},
}

\bib{dddorairs2}{article}{
  author={Dorais, Fran\c {c}ois G.},
  author={Hirst, Jeffry},
  author={Shafer, Paul},
  title={Reverse mathematics and algebraic field extensions},
  journal={Computability},
  volume={2},
  date={2013},
  number={2},
  pages={75--92},
}

\bib{erjov}{article}{
  author={Ershov, Yu. L.},
  title={Theorie der Numerierungen. III},
  journal={Z. Math. Logik Grundlagen Math.},
  volume={23},
  date={1977},
  number={4},
  pages={289--371},
}

\bib{recmath1}{collection}{
  title={Handbook of recursive mathematics. Vol. 1},
  series={Studies in Logic and the Foundations of Mathematics},
  volume={138},
  editor={Ershov, Yu. L.},
  editor={Goncharov, S. S.},
  editor={Nerode, A.},
  editor={Remmel, J. B.},
  editor={Marek, V. W.},
  note={Recursive model theory},
  publisher={North-Holland, Amsterdam},
  date={1998},
  pages={xlvi+620},
}

\bib{littlefef}{book}{
  author={Feferman, Solomon},
  title={How a Little Bit goes a Long Way: Predicative Foundations of Analysis},
  year={2013},
  note={unpublished notes from 1977-1981 with updated introduction, \url {https://math.stanford.edu/~feferman/papers/pfa(1).pdf}},
}

\bib{fried}{article}{
  author={Friedman, Harvey},
  title={Some systems of second order arithmetic and their use},
  conference={ title={Proceedings of the International Congress of Mathematicians (Vancouver, B.\ C., 1974), Vol.\ 1}, },
  book={ },
  date={1975},
  pages={235--242},
}

\bib{fried2}{article}{
  author={Friedman, Harvey},
  title={ Systems of second order arithmetic with restricted induction, I \& II (Abstracts) },
  journal={J. Symbolic Logic},
  volume={41},
  date={1976},
  pages={557--559},
}

\bib{fried4}{article}{
  author={Friedman, Harvey},
  author={Simpson, Stephen G.},
  author={Smith, Rick L.},
  title={Countable algebra and set existence axioms},
  journal={Ann. Pure Appl. Logic},
  volume={25},
  date={1983},
  number={2},
  pages={141--181},
}

\bib{vrolijk}{article}{
  author={Fr\"{o}hlich, A.},
  author={Shepherdson, J. C.},
  title={Effective procedures in field theory},
  journal={Philos. Trans. Roy. Soc. London. Ser. A.},
  volume={248},
  date={1956},
  pages={407--432},
}

\bib{supergandy}{article}{
  author={Gandy, Robin},
  title={General recursive functionals of finite type and hierarchies of functions},
  journal={Ann. Fac. Sci. Univ. Clermont-Ferrand No.},
  volume={35},
  date={1967},
  pages={5--24},
}

\bib{hatsjie}{article}{
  author={Hatzikiriakou, Kostas},
  title={Minimal prime ideals and arithmetic comprehension},
  journal={J. Symbolic Logic},
  volume={56},
  date={1991},
  number={1},
  pages={67--70},
}

\bib{hirstphd}{book}{
  author={Hirst, Jeffry Lynn},
  title={Combinatorics In Subsystems Of Second Order Arithmetic},
  note={Thesis (Ph.D.)--The Pennsylvania State University},
  publisher={ProQuest LLC, Ann Arbor, MI},
  date={1987},
  pages={153},
}

\bib{heerlijkheid}{book}{
  author={Herrlich, Horst},
  title={Axiom of choice},
  series={Lecture Notes in Mathematics},
  volume={1876},
  publisher={Springer},
  date={2006},
  pages={xiv+194},
}

\bib{hillebilly}{book}{
  author={Hilbert, David},
  author={Bernays, Paul},
  title={Grundlagen der Mathematik. I},
  series={Zweite Auflage. Die Grundlehren der mathematischen Wissenschaften, Band 40},
  publisher={Springer},
  date={1968},
  pages={xv+473 pp. (loose errata)},
}

\bib{hillebilly2}{book}{
  author={Hilbert, David},
  author={Bernays, Paul},
  title={Grundlagen der Mathematik. II},
  series={Zweite Auflage. Die Grundlehren der mathematischen Wissenschaften, Band 50},
  publisher={Springer},
  date={1970},
}

\bib{hunterphd}{book}{
  author={Hunter, James},
  title={Higher-order reverse topology},
  note={Thesis (Ph.D.)--The University of Wisconsin - Madison},
  publisher={ProQuest LLC, Ann Arbor, MI},
  date={2008},
  pages={97},
}

\bib{reim}{book}{
  author={Katz, Matthew},
  author={Reimann, Jan},
  title={An introduction to Ramsey theory},
  series={Student Mathematical Library},
  volume={87},
  note={Fast functions, infinity, and metamathematics},
  publisher={American Mathematical Society, Providence, RI; Mathematics Advanced Study Semesters, University Park, PA},
  date={2018},
  pages={xiv+207},
}

\bib{ooskelly}{book}{
  author={Kelley, John L.},
  title={General topology},
  note={Reprint of the 1955 edition; Graduate Texts in Mathematics, No. 27},
  publisher={Springer-Verlag},
  date={1975},
  pages={xiv+298},
}

\bib{kohlenbach4}{article}{
  author={Kohlenbach, Ulrich},
  title={Foundational and mathematical uses of higher types},
  conference={ title={Reflections on the foundations of mathematics}, },
  book={ series={Lect. Notes Log.}, volume={15}, publisher={Assoc. Symbol. Logic}, },
  date={2002},
  pages={92--116},
}

\bib{kohlenbach2}{article}{
  author={Kohlenbach, Ulrich},
  title={Higher order reverse mathematics},
  conference={ title={Reverse mathematics 2001}, },
  book={ series={Lect. Notes Log.}, volume={21}, publisher={Assoc. Symbol. Logic}, },
  date={2005},
  pages={281--295},
}

\bib{KT}{article}{
  author={Kreisel, G.},
  author={Troelstra, A. S.},
  title={Formal systems for some branches of intuitionistic analysis},
  journal={Ann. Math. Logic},
  volume={1},
  date={1970},
  pages={229--387},
}

\bib{mandje2}{article}{
  author={Mandelkern, Mark},
  title={Brouwerian counterexamples},
  journal={Math. Mag.},
  volume={62},
  date={1989},
  number={1},
  pages={3--27},
}

\bib{moorsmidje}{article}{
  author={Moore, E. H. },
  author={Smith, H.},
  title={A General Theory of Limits},
  journal={Amer. J. Math.},
  volume={44},
  date={1922},
  pages={102--121},
}

\bib{mullingitover}{book}{
  author={Muldowney, P.},
  title={A general theory of integration in function spaces, including Wiener and Feynman integration},
  volume={153},
  publisher={Longman Scientific \& Technical, Harlow; John Wiley},
  date={1987},
  pages={viii+115},
}

\bib{dagsamIII}{article}{
  author={Normann, Dag},
  author={Sanders, Sam},
  title={On the mathematical and foundational significance of the uncountable},
  journal={Journal of Mathematical Logic, \url {https://doi.org/10.1142/S0219061319500016}},
  date={2019},
}

\bib{dagsamVI}{article}{
  author={Normann, Dag},
  author={Sanders, Sam},
  title={Representations in measure theory},
  journal={Submitted, arXiv: \url {https://arxiv.org/abs/1902.02756}},
  date={2019},
}

\bib{dagsamVII}{article}{
  author={Normann, Dag},
  author={Sanders, Sam},
  title={Open sets in Reverse Mathematics and Computability Theory},
  journal={Journal of Logic and Computability},
  volume={30},
  number={8},
  date={2020},
  pages={pp.\ 40},
}

\bib{dagsamV}{article}{
  author={Normann, Dag},
  author={Sanders, Sam},
  title={Pincherle's theorem in reverse mathematics and computability theory},
  journal={Ann. Pure Appl. Logic},
  volume={171},
  date={2020},
  number={5},
  pages={102788, 41},
}

\bib{dagsamIX}{article}{
  author={Normann, Dag},
  author={Sanders, Sam},
  title={The Axiom of Choice in Computability Theory and Reverse Mathematics},
  journal={Submitted, arxiv: \url {https://arxiv.org/abs/2006.01614}},
  pages={pp.\ 25},
  date={2020},
}

\bib{dagsamX}{article}{
  author={Normann, Dag},
  author={Sanders, Sam},
  title={On the uncountability of $\mathbb {R}$},
  journal={Submitted},
  pages={pp.\ 29},
  date={2020},
}

\bib{radiant}{article}{
  author={Rado, R.},
  title={Axiomatic treatment of rank in infinite sets},
  journal={Canadian J. Math.},
  volume={1},
  date={1949},
  pages={337--343},
}

\bib{royden}{book}{
  author={Royden, H. L.},
  title={Real analysis},
  edition={3},
  publisher={Macmillan Publishing Company},
  date={1988},
  pages={xx+444},
}

\bib{rudin3}{book}{
  author={Rudin, Walter},
  title={Real and complex analysis},
  edition={3},
  publisher={McGraw-Hill},
  date={1987},
  pages={xiv+416},
}

\bib{yamayamaharehare}{article}{
  author={Sakamoto, Nobuyuki},
  author={Yamazaki, Takeshi},
  title={Uniform versions of some axioms of second order arithmetic},
  journal={MLQ Math. Log. Q.},
  volume={50},
  date={2004},
  number={6},
  pages={587--593},
}

\bib{samcie19}{article}{
  author={Sanders, Sam},
  title={Nets and Reverse Mathematics: initial results},
  year={2019},
  journal={LNCS 11558, proceedings of CiE19, Springer},
  pages={pp.\ 12},
}

\bib{samwollic19}{article}{
  author={Sanders, Sam},
  title={Reverse Mathematics and computability theory of domain theory},
  year={2019},
  journal={LNCS 11541, proceedings of WoLLIC19, Springer},
  pages={pp.\ 20},
}

\bib{samnetspilot}{article}{
  author={Sanders, Sam},
  title={Nets and Reverse Mathematics: a pilot study},
  year={2019},
  journal={Computability, \url {doi: 10.3233/COM-190265}},
  pages={pp.\ 34},
}

\bib{samph}{article}{
  author={Sanders, Sam},
  title={Plato and the foundations of mathematics},
  year={2019},
  journal={Submitted, arxiv: \url {https://arxiv.org/abs/1908.05676}},
  pages={pp.\ 44},
}

\bib{samflo2}{article}{
  author={Sanders, Sam},
  title={Lifting recursive counterexamples to higher-order arithmetic},
  year={2020},
  journal={Proceedings of LFCS2020, Lecture Notes in Computer Science 11972, Springer},
}

\bib{samsplit}{article}{
  author={Sanders, Sam},
  title={Splittings and disjunctions in reverse mathematics},
  journal={Notre Dame J. Form. Log.},
  volume={61},
  date={2020},
  number={1},
  pages={51--74},
}

\bib{sahotop}{article}{
  author={Sanders, Sam},
  title={Reverse Mathematics of topology: dimension, paracompactness, and splittings},
  year={2020},
  journal={To appear in: Notre Dame Journal for Formal Logic, arXiv: \url {https://arxiv.org/abs/1808.08785}},
  pages={pp.\ 21},
}

\bib{rauwziel}{article}{
  author={Simpson, Stephen G.},
  author={Rao, J.},
  title={Reverse algebra},
  conference={ title={Handbook of recursive mathematics, Vol. 2}, },
  book={ series={Stud. Logic Found. Math.}, volume={139}, publisher={North-Holland}, },
  date={1998},
  pages={1355--1372},
}

\bib{simpson1}{collection}{
  title={Reverse mathematics 2001},
  series={Lecture Notes in Logic},
  volume={21},
  editor={Simpson, Stephen G.},
  publisher={Assoc. Symbol. Logic},
  place={La Jolla, CA},
  date={2005},
  pages={x+401},
}

\bib{simpson2}{book}{
  author={Simpson, Stephen G.},
  title={Subsystems of second order arithmetic},
  series={Perspectives in Logic},
  edition={2},
  publisher={CUP},
  date={2009},
  pages={xvi+444},
}

\bib{specker}{article}{
  author={Specker, Ernst},
  title={Nicht konstruktiv beweisbare S\"{a}tze der Analysis},
  language={German},
  journal={J. Symbolic Logic},
  volume={14},
  date={1949},
  pages={145--158},
}

\bib{stillebron}{book}{
  author={Stillwell, J.},
  title={Reverse mathematics, proofs from the inside out},
  pages={xiii + 182},
  year={2018},
  publisher={Princeton Univ.\ Press},
}

\bib{zwette}{book}{
  author={Swartz, Charles},
  title={Introduction to gauge integrals},
  publisher={World Scientific},
  date={2001},
  pages={x+157},
}

\bib{takeuti}{book}{
  author={Takeuti, Gaisi},
  title={Proof theory},
  series={Studies in Logic and the Foundations of Mathematics},
  volume={81},
  edition={2},
  note={With an appendix containing contributions by Georg Kreisel, Wolfram Pohlers, Stephen G. Simpson and Solomon Feferman},
  publisher={North-Holland},
  date={1987},
  pages={x+490},
}

\bib{troelstra1}{book}{
  author={Troelstra, Anne Sjerp},
  title={Metamathematical investigation of intuitionistic arithmetic and analysis},
  note={Lecture Notes in Mathematics, Vol.\ 344},
  publisher={Springer Berlin},
  date={1973},
  pages={xv+485},
}

\bib{troeleke1}{book}{
  author={Troelstra, Anne Sjerp},
  author={van Dalen, Dirk},
  title={Constructivism in mathematics. Vol. I},
  series={Studies in Logic and the Foundations of Mathematics},
  volume={121},
  publisher={North-Holland},
  date={1988},
  pages={xx+342+XIV},
}

\bib{tur37}{article}{
  author={Turing, Alan},
  title={On computable numbers, with an application to the Entscheidungs-problem},
  year={1936},
  journal={Proceedings of the London Mathematical Society},
  volume={42},
  pages={230-265},
}

\bib{kliet}{article}{
  author={Vietoris, Leopold},
  title={Stetige Mengen},
  language={German},
  journal={Monatsh. Math. Phys.},
  volume={31},
  date={1921},
  number={1},
  pages={173--204},
}

\end{biblist}
\end{bibdiv}

\bye